\documentclass[11pt]{article}

\usepackage{amsmath,amsthm,amsfonts,amssymb}
\usepackage[margin=2.2cm]{geometry}
\usepackage{thmtools}
\usepackage{hyperref}
\usepackage{enumitem}
\usepackage{authblk}
\usepackage{orcidlink}
\usepackage{booktabs}
\usepackage{mathrsfs}
\usepackage{subfigure}
\usepackage{float}
\usepackage[T1]{fontenc}
\usepackage{lmodern}
\usepackage{algorithm}
\usepackage{algpseudocode}
\usepackage{caption}
\usepackage{lipsum}
\hypersetup{
	colorlinks=true,
	linkcolor=blue,
	citecolor=blue,
	urlcolor=blue
}
\usepackage[mathscr]{euscript}
\allowdisplaybreaks
\newtheorem{thm}{Theorem}[section]
\newtheorem{lem}[thm]{Lemma}
\newtheorem{prop}[thm]{Proposition}

\theoremstyle{definition}
\newtheorem{defin}[thm]{Definition}
\newtheorem{obs}[thm]{Observation}

\usepackage{tikz}
\usetikzlibrary{positioning}
\usepackage{xfrac}
\usepackage{pdflscape}
\usepackage{tikz-cd}
\usetikzlibrary{patterns}
\usepackage{enumitem}
\usepackage{graphicx}
\usepackage{adjustbox}
\newcommand{\Crit}{\operatorname{Crit}}
\newcommand{\Ker}{\operatorname{Ker}}

\newcommand{\GT}{\operatorname{GT}}
\newcommand{\id}{\operatorname{id}}
\newcommand{\V}{\mathcal{V}}
\newcommand{\Va}{\V_{\alpha}}
\newcommand{\N}{\mathcal{N}(X)}
\newcommand{\VN}{\V_{\mathcal{N}}}
\newcommand{\wX}{\widetilde{X}}
\newcommand{\CrV}{\Crit_q^{\V}(\wX)}
\newcommand{\CrW}{\Crit_q^{\mathcal{W}}(\wX)}

\newcommand{\CrWq}{\Crit_{q+1}^{\mathcal{W}}(\wX)}
\newcommand{\CrqZ}{\Crit_q^{\mathcal{Z}}(L)}
\newcommand{\W}{\mathcal{W}}
\newcommand{\p}{\mathcal{Z}}
\newcommand{\CW}{C^{\W}}
\newcommand{\Wo}{\overline{\mathcal{W}}}
\newcommand{\Wa}{\mathcal{W}_\alpha}
\newcommand{\oW}{\overline{\W}_\alpha}

\newcommand{\Aa}{\bar{A}_{\alpha}}
\newcommand{\wA}{\widetilde{A}_\alpha}
\newcommand{\Z}{\mathbb{Z}}
\newcommand{\Lq}{\mathcal{L}}
\newcommand{\Lz}{L^{\mathcal{Z}}}
\newcommand{\h}{H}
\newcommand{\pL}{\partial^{\Lq}}
\newcommand{\pW}{\partial^{\W}}
\newcommand{\Tf}{\GT(f_{q+1}((\tau,\beta)), f_q((\sigma,\alpha)))}
\newcommand{\Gba}{\Gamma((\tau,\beta), (\sigma,\alpha))}
\newcommand{\Tba}{\GT(\tau_\beta, \sigma_\alpha)}
\newcommand{\z}{\mathcal{Z}}
\newcommand{\az}{\alpha_0}
\newcommand{\ao}{\alpha_1}

\newcommand{\s}{\sigma'_{\alpha_1}}
\newcommand{\sa}{\sigma_{\alpha_0}}
\newcommand{\tb}{(\tau_1)_{\beta}}
\newcommand{\Pa}{P_{\sa, \alpha_1}}

\newcommand{\Crv}{\Crit^{\V}}
\newcommand{\CrP}{\Crit^{\p}_q(C)}
\newcommand{\CrPq}{\Crit^{\p}_{q+1}(C)}
\newcommand{\CP}{(\mathcal{C}^{\p}_{\#}, \partial^{\p}_{\#})}
\newcommand{\C}{(\mathcal{C}_{\#}, \partial_{\#})}
\newcommand{\Lc}{(\Lq_{\#}(X), \partial^{\Lq}_{\#})}
\newcommand{\M}{\mathcal{M}}
\newcommand{\K}{K_{5,5}}
\newcommand{\Nm}{\mathcal{N}(\M(\K))}
\newcommand{\LM}{(\Lq_{\#}(\M(\K)), \pL_{\#})}
\newcommand{\LMZ}{(\Lq^{\z}_{\#}(\M(\K)), \partial^{\Lq^{\z}}_{\#})}
\newcommand{\F}{(F_{\#}(\M(\K)), \partial^{F}_{\#})}
\newcommand{\LMz}{(F^{\z'}_{\#}(\M(\K)), \partial^{F^{\z'}}_{\#})}
\newcommand{\kk}{K_{4,3}}
\newcommand{\Kk}{K_{4,2}}
\newcommand{\vv}{\mathscr{V}}
\newcommand{\U}{\mathcal{U}}
\newcommand{\si}{\sigma_{\alpha}}
\newcommand{\ax}{[\alpha;x]}
\newcommand{\by}{[\beta;y]}
\newcommand{\bz}{\beta_0}

\title{A combinatorial nerve theorem for effective homology computation}

\author[a]{Sucharita Barik\thanks{\texttt{sucharita.barik.126@tcgcrest.org}}}

\author[b]{Anupam Mondal\thanks{\texttt{anupam.mondal@tcgcrest.org}}}

\author[c]{Sajal Mukherjee~\thanks{\texttt{sajal.mukherjee@tcgcrest.org}}}

\author[d]{Pritam Chandra Pramanik~\thanks{\texttt{pritam.pramanik.80@tcgcrest.org}}}

\author[e]{Arundhati Rakshit~\thanks{\texttt{arundhati.rakshit.124@tcgcrest.org}}}

\affil[a,b,c,d,e]{\small Institute for Advancing Intelligence (IAI), TCG CREST, Kolkata--700091, India}
\affil[a,b,c,e]{\small Academy of Scientific and Innovative Research (AcSIR), Ghaziabad--201002, India}
\date{}
\begin{document}
	\maketitle
	\begin{abstract}
		The celebrated (homological) nerve theorem makes use of spectral sequences to determine the homology of a simplicial complex. However, this theorem cannot effectively compute the homology in every circumstance. In this paper, we develop an effective version of the nerve theorem, yielding a new and powerful tool for homology computation.
		
		The essence of our theorem can be formulated in the following manner. Suppose, $X$ is a simplicial complex with covering subcomplexes $A_1, \dots ,A_k$, that is, $X= \cup_{i=1}^k A_i$ and $\N$ is the nerve of $X$ with respect to its covering. Let $\Wa$ be a given gradient vector field on $A_{\alpha}(=\cap_{i \in \alpha} A_i)$ for each $\alpha \in \N$. Then, we use the mere information of the gradient trajectories in $A_{\alpha}$ for each $\alpha \in \N$ to explicitly compute the homology groups of $X$. Furthermore, we point out here, that these gradient vector fields do not need to be coherent, that is, they do not need to coincide on the intersections, which gives us ample flexibility to apply our theorem. Moreover, we can further simplify the computation of the homology groups using a gradient vector field on the nerve of $X$. Our approach is purely combinatorial, in the sense that it does not involve any notions of geometric realisation, continuity or homotopy, which makes it more amenable to computation and coding.
	\end{abstract}
	
	\textbf{Keywords:}  simplicial complex, simplotopal complex, nerve complex, discrete Morse theory, Morse homology, efficient homology computation.
	
	\textit{MSC 2020:} 57Q70 (primary), 05E45, 55U15.
	
	\section{Introduction}\label{intro}
	Efficient homology computation plays an important role not only in pure mathematical problems (see \cite{Curry2015, Shareshian2007, Shukla}) but also in applied and scientific problems of varied kinds, for example, in the analysis of electromagnetic boundary value problems in computational electromagnetism and numerical solutions of three dimensional magnetostatic problems (see \cite{ DlotkoSpecogna2010, Rodrguez2013, Kotiuga1984, Suuriniemi2004}). Discrete Morse theory, introduced by Forman (see \cite{Forman1, Forman2}) is an exceptionally powerful tool used for efficient homology computation (see \cite{Harker2013, Matching}). Given a huge simplicial complex, it considerably reduces the number of generators of the chain groups, which makes the computation of homology more convenient. However, the efficiency of the computation of Morse homology relies heavily on assigning a good gradient vector field (one which reduces the number of generators significantly) on the complex, which is often challenging. As a matter of fact, it is an NP-hard problem to assign an optimal gradient vector field on a large simplicial complex (see \cite{NP1, NP2, NP3}). Another notable tool used for homology computation (used mostly for theoretical purposes rather than explicit computation) is the homological nerve theorem, which is applicable in the following setup.
	Suppose $X$ is an (abstract) simplicial complex with subcomplexes $A_1, \dots ,A_k$ such that $\cup_{i=1}^kA_i=X$. Then, the (homological) nerve theorem can be used to determine the homology of $X$, under certain strong conditions on the subcomplexes and their possible intersections (e.g., they are acyclic up to a certain dimension), using Leray spectral sequence (see \cite{Borsuk1948, Leray1945}). However, in general cases, that is, when the subcomplexes and their intersections are not necessarily acyclic, the spectral sequence approach does not always offer an explicit way to compute the homology of a simplicial complex.
	
	In our paper, we have devised a tool for efficient homology computation which aptly addresses the limitations of both the aforementioned tools. The idea is to \emph{break} the complex into smaller manageable subcomplexes and assign efficient gradient vector fields on these subcomplexes and their possible intersections, which is comparatively easier than assigning a sufficiently good gradient vector field on the whole complex. It is worth mentioning here that these gradient vector fields do not need to be \emph{coherent}, that is, they do not need to coincide on the intersections, which offers us ample flexibility. In such a setup, our theorem provides an algorithmic procedure to explicitly determine the homology groups of $X$ using the combinatorial information of the gradient trajectories in the subcomplexes and their possible intersections. Moreover, our theorem works without any additional conditions or restrictions on the subcomplexes or on their intersections. We mention here that our approach throughout this paper is purely combinatorial, that is, we do not use any notions of geometric realisation, continuity or homotopy anywhere, so that it is more amenable to coding.

	In order to proceed to our main result, we need to introduce a few terminologies.
	Let $X$ be an (abstract) simplicial complex. We denote the \emph{dimension} of a simplex $\sigma$ in $X$ as $\dim(\sigma)$ and a $q$-dimensional simplex $\sigma$ as $\sigma^{(q)}$. Suppose, $\sigma, \tau \in X$, such that $\sigma \subseteq \tau$ and $\dim(\tau)= \dim(\sigma) + 1$. Then $\sigma$ is a \emph{facet} of $\tau$. We denote this as $\sigma <_f \tau$.

	A \emph{discrete vector field} $\V$ is a collection of pairs of simplices in $X$, of the form $(\sigma, \tau)$, such that, $\sigma <_f \tau$, and each simplex of $X$ is contained in at most one pair of $\V$. A \emph{Forman $\V$-trajectory} (or simply, a trajectory) $P$ is a sequence of simplices of the following form,
	$$ P:~ \tau_0^{(q)}, \sigma_1^{(q-1)}, \tau_1^{(q)}, \dots ,\sigma_k^{(q-1)}, \tau_k^{(q)}, \sigma_{k+1}^{(q-1)},$$
	where  $(\sigma_{i}, \tau_{i}) \in \mathcal{V}$ for each $i \in [k]$ ($:=\{1, \dots,k\}$),  $ \sigma_{i} <_f \tau_{i-1}$ and $(\sigma_i, \tau_{i-1}) \notin \mathcal{V}$ for all $i\in [k+1]$. $P$ is called \emph{closed} if $\tau_0=\tau_r$ for some $r > 1$. A discrete vector field $\V$ is a \emph{gradient vector field} if there are no closed $\V$-trajectories.
	
	A simplex which does not appear in $\V$ is \emph{$\V$-critical} (or simply \emph{critical}). $\Crv_q(X)$ denotes the set of all $q$-dimensional $\V$-critical simplices in $X$ while $\Crit^{\V}(X)$ denotes the set of all $\V$-critical simplices of $X$. A simplicial complex $X$ is \emph{collapsible} if it admits a gradient vector field $\V$ such that $\Crv(X)$ consists of only a single $0$-dimensional simplex. For a general introduction to discrete Morse theory, we refer to \cite{Scoville2019}.
	
	Next, we introduce the notion of \emph{nerve complex} of a simplicial complex $X$. Let $X$ be a simplicial complex and $A_1, \dots ,A_k$ be subcomplexes of $X$ such that $X= \bigcup_{i=1}^{k}A_i$. Then the \emph{nerve complex}, or simply \emph{nerve} of $X$ (with respect to $A_1, \dots,A_k$) is a simplicial complex $\mathcal{N}(X)$ as follows,
	$$ \N:=\{\alpha \subseteq [k] \mid \bigcap_{i \in \alpha}A_i \ne \emptyset\}. $$
	We denote $\bigcap_{i\in \alpha}A_i$ as $A_{\alpha}$.
	
	Now, we build the setup for stating our main result. Let $X$ be a simplicial complex with subcomplexes $A_1, \dots , A_k$ such that $ X= \bigcup_{i=1}^k A_i.$ Let $\N$ be the nerve of $X$. For each $\alpha \in \N$, we consider a copy of $A_{\alpha}$, namely $\Aa$, where each simplex $\sigma \in A_{\alpha}$ is relabelled as $\sigma_{\alpha}$, that is,
	$$ \Aa := \{\sigma_\alpha \mid \sigma \in A_{\alpha}\}.$$
	We note here that for each $\sigma \in A_{\alpha} \cap A_{\beta}$, $\sigma_{\alpha} \ne \sigma_{\beta}$, if $\alpha \ne \beta$, $\alpha, \beta \in \N$.
	
	Let $\overline{\W}_{\alpha}$ be a given gradient vector field on $\Aa$ for each $\alpha \in \N$. Now, for each $q \geq 0$,
	
	$$ L_q(X):= \bigcup_{i=0}^q \bigcup_{\alpha^{(q-i)} \in \N} \Crit_i^{\overline{\W}_{\alpha}}(\bar{A}_{\alpha}).$$
	We denote $L_q(X)$ as simply $L_q$ whenever the object of reference is implicit in the context.
	
	We next define a new combinatorial object, which will be of primary interest subsequently, namely, the \emph{generalised trajectories}.
	
	\begin{defin}\label{gt}
		Let $\gamma \in L_{q+1}(X)$, $\delta \in L_q(X)$, $q \geq 0$. Then we define a \emph{generalised trajectory} from $\gamma$ to $\delta$ for the following two cases.
		\begin{enumerate}[label=(\alph*)]
			\item Let $\gamma= (\tau_0)^{(i)}_{\alpha} \in L_{q+1}$, $\delta= (\sigma_{k+1})^{(i-1)}_{\alpha} \in L_q$, $\alpha^{(q+1-i)} \in \N$, where $1 \le i \le (q+1)$. A generalised trajectory from $\gamma$ to $\delta$ is a sequence of simplices of the form,
			
			$$ P: (\gamma=)~(\tau_0)^{(i)}_{\alpha}, (\sigma_1)^{(i-1)}_{\alpha}, (\tau_1)^{(i)}_{\alpha}, \dots, (\sigma_k)^{(i-1)}_{\alpha}, (\tau_k)^{(i)}_{\alpha}, (\sigma_{k+1})^{(i-1)}_{\alpha} ~(=\delta) $$
			where, $((\sigma_j)_{\alpha}, (\tau_j)_\alpha) \in \oW$ for each $j \in [k]$, $(\sigma_j)_\alpha <_f (\tau_{j-1})_\alpha$, $((\sigma_j)_\alpha, (\tau_{j-1})_\alpha) \notin \oW$ for each $j \in [k+1]$.
			
			A generalised trajectory of this kind can also be represented in the following way,
			$$ (\gamma=)~(\tau_0)^{(i)}_{\alpha} \rightarrow (\sigma_1)^{(i-1)}_{\alpha} \rightarrowtail (\tau_1)^{(i)}_{\alpha} \rightarrow \dots \rightarrow (\sigma_k)^{(i-1)}_{\alpha} \rightarrowtail (\tau_k)^{(i)}_{\alpha} \rightarrow (\sigma_{k+1})^{(i-1)}_{\alpha} ~(=\delta), $$
			where `$\tau \rightarrow \sigma$' denotes that $\sigma <_f \tau$ and `$\sigma_\alpha \rightarrowtail \tau_\alpha$' denotes that $(\sigma, \tau) \in  \oW$.
			
			The \emph{weight} of $P$,
			
			$$ w_G(P):= \left(\prod_{j=0}^{k-1}(-1)\langle \tau_j, \sigma_{j+1} \rangle \langle \tau_{j+1}, \sigma_{j+1} \rangle\right)\langle \tau_k, \sigma_{k+1}\rangle.$$
			(Intuitively, this is a Forman-trajectory, beginning from a $\oW$-critical simplex in $\Aa$, traversing entirely through $\Aa$, following $\oW$ and terminating at a $\oW$-critical simplex of $\Aa$.)\label{1st}
			
			\item Let $\alpha_t^{(q+1-i-t)} <_f \alpha_{t-1} <_f \dots <_f\alpha_0^{(q+1-i)} \in \N$, where $0 \le i < (q+1)$ and $1 \leq t \leq (q+1-i)$. Let $\gamma= (\tau_{1,0})^{(i)}_{\alpha_0} \in L_{q+1}$, $\delta= (\tau_{t,j_t+r})^{(i+t-1)}_{\alpha_t} \in L_q$. In this case, a generalised trajectory from $\gamma$ to $\delta$ is a sequence of simplices of the form (we refer to \autoref{gen} for an illustration),\label{second}
			$$ \begin{aligned}
				P:~ &(\tau_{1,0})^{(i)}_{\alpha_0}, (\tau_{0,1})^{(i-1)}_{\alpha_0}, (\tau_{1,1})^{(i)}_{\alpha_0}, \dots , (\tau_{0,j_1})^{(i-1)}_{\alpha_0},(\tau_{1,j_1})^{(i)}_{\alpha_0},(\tau_{1,j_1})^{(i)}_{\alpha_1}, (\tau_{2,j_1+1})^{(i+1)}_{\alpha_1}, \\ &(\tau_{1,j_1+1})^{(i)}_{\alpha_1}, (\tau_{2,j_1+2})^{(i+1)}_{\alpha_1}, \dots ,(\tau_{1, j_2-1})_{\alpha_1}^{(i)}, (\tau_{2,j_2})_{\alpha_1}^{(i+1)}, (\tau_{2,j_2})_{\alpha_2}^{(i+1)}, \dots, (\tau_{t-1,j_t-1})^{(i+t-2)}_{\alpha_{t-1}},\\ & (\tau_{t,j_t})^{(i+t-1)}_{\alpha_{t-1}}, (\tau_{t,j_t})^{(i+t-1)}_{\alpha_t}, (\tau_{t+1,j_t+1})^{(i+t)}_{\alpha_t}, (\tau_{t,j_t+1})^{(i+t-1)}_{\alpha_t}, \dots, (\tau_{t+1,j_t+r})^{(i+t)}_{\alpha_t}, (\tau_{t,j_t+r})^{(i+t-1)}_{\alpha_t},
			\end{aligned}  $$
			
			where,
			\begin{enumerate}[label=(\roman*)]
				\item $0 \le j_1 < j_2 < \dots < j_t$ and $r \ge 0$,
				\item $((\tau_{0,m})_{\alpha_0}, (\tau_{1,m})_{\alpha_0}) \in \Wo_{\alpha_0}$ for each $1 \le m \le j_1$,
				\item $((\tau_{l,m})_{\alpha_l}, (\tau_{l+1,m+1})_{\alpha_l}) \in \Wo_{\alpha_l}$, for each $j_l \le m \le j_{l+1} -1$ and $1\le l < t$,
				\item $((\tau_{t,m})_{\alpha_t}, (\tau_{t+1,m+1})_{\alpha_t}) \in \Wo_{\alpha_t}$ for each $j_t \le m \le j_t+r-1$,
				\item $(\tau_{0,m})_{\alpha_0} <_f (\tau_{1,m-1})_{\alpha_0}$ and $((\tau_{0,m})_{\alpha_0}, (\tau_{1,m-1})_{\alpha_0}) \notin \Wo_{\alpha_0}$ for each $1 \le m \le j_1$,
				\item $  (\tau_{l,m})_{\alpha_l} <_f (\tau_{l+1,m})_{\alpha_l}$, and $((\tau_{l,m})_{\alpha_l}, (\tau_{l+1,m})_{\alpha_l}) \notin \Wo_{\alpha_l}$, for each $j_l < m \le j_{l+1} -1$ and $1\le l \le t$, where, $j_{t+1}-1=j_t+r$.
			\end{enumerate}
			
			A generalised trajectory of this kind can be depicted as,
			$$ \begin{aligned}
				&(\tau_{1,0})^{(i)}_{\alpha_0} \rightarrow (\tau_{0,1})^{(i-1)}_{\alpha_0} \rightarrowtail (\tau_{1,1})^{(i)}_{\alpha_0} \rightarrow \dots  \rightarrow(\tau_{0,j_1})^{(i-1)}_{\alpha_0} \rightarrowtail (\tau_{1,j_1})^{(i)}_{\alpha_0} \Rightarrow (\tau_{1,j_1})^{(i)}_{\alpha_1} \rightarrowtail (\tau_{2,j_1+1})^{(i+1)}_{\alpha_1}\\ & \rightarrow (\tau_{1,j_1+1})^{(i)}_{\alpha_1} \rightarrowtail (\tau_{2,j_1+2})^{(i+1)}_{\alpha_1} \rightarrow \dots \rightarrow (\tau_{1, j_2-1})_{\alpha_1}^{(i)} \rightarrowtail (\tau_{2,j_2})_{\alpha_1}^{(i+1)} \Rightarrow (\tau_{2,j_2})_{\alpha_2}^{(i+1)} \rightarrowtail  \dots \rightarrow \\ &(\tau_{t-1,j_t-1})^{(i+t-2)}_{\alpha_{t-1}} \rightarrowtail (\tau_{t,j_t})^{(i+t-1)}_{\alpha_{t-1}} \Rightarrow (\tau_{t,j_t})^{(i+t-1)}_{\alpha_t} \rightarrowtail (\tau_{t+1,j_t+1})^{(i+t)}_{\alpha_t} \rightarrow (\tau_{t,j_t+1})^{(i+t-1)}_{\alpha_t}\rightarrowtail \dots \\ & \rightarrowtail (\tau_{t+1,j_t+r})^{(i+t)}_{\alpha_t} \rightarrow (\tau_{t,j_t+r})^{(i+t-1)}_{\alpha_t},
			\end{aligned}  $$
			where `$\sigma_\alpha\Rightarrow \sigma_\beta$' denotes that $\beta <_f \alpha$ and $\sigma \in A_\alpha \cap A_\beta$.
			
			The \emph{weight} of this trajectory $P$,
			$$\begin{aligned}
				w_G(P) := & (-1)^{\phi(i,t)} \prod_{l=1}^{t} \langle \alpha_{l-1}, \alpha_{l}\rangle \left(\prod_{k=0}^{j_1-1}(-1) \langle \tau_{1,k}, \tau_{0,k+1} \rangle \langle \tau_{1,k+1}, \tau_{0,k+1} \rangle \right) \\ &\prod_{l=1}^{t-1}\left(\left( \prod_{k=j_l}^{j_{l+1}-2}(-1)\langle \tau_{l+1,k+1}, \tau_{l,k} \rangle \langle \tau_{l+1,k+1}, \tau_{l,k+1} \rangle\right) (-1)\langle \tau_{l+1,j_{l+1}}, \tau_{l,j_{l+1}-1} \rangle \right)\\ &\left( \prod_{k=j_t}^{j_t+r-1} (-1)\langle \tau_{t+1,k+1}, \tau_{t,k} \rangle \langle \tau_{t+1,k+1}, \tau_{t,k+1} \rangle\right),
			\end{aligned}$$
			where $\phi(i,t)= it + \frac{t(t-1)}{2}$.
		
			(Intuitively, these are trajectories commencing from a $\Wo_{\az}$-critical simplex in $\bar{A}_{\az}$, traversing through $\bar{A}_{\az}$, following $\Wo_{\az}$ for some time, then entering $\bar{A}_{\ao}$, where $\ao <_f \az$, traversing through $\bar{A}_{\ao}$ for some time, following $\Wo_{\ao}$ and then entering $\bar{A}_{\alpha_2}$, where $\alpha_2 <_f \ao$, and so on, and finally terminates at a $\Wo_{\alpha_t}$-critical simplex in $\bar{A}_{\alpha_t}$ for some $\alpha_t$, where $\az <_f \ao <_f \dots <_f \alpha_t$. )
		\end{enumerate}
	\end{defin}

	From here on, we will refer to these two kinds of generalised trajectories as generalised trajectories of the first (\autoref{gt}\ref{1st}) and second kind (\autoref{gt}\ref{second}) respectively. \autoref{gen} illustrates a generalised trajectory of the second kind.
	\begin{figure}[h]
		\centering
		\begin{tikzpicture}[
			vertex/.style={draw, circle, fill=black, inner sep=0.05pt, minimum size=0.7mm},
			baseline, scale=1.3]
			
			\node (a) at (0,0){$(\tau_{1,0})_{\az}^{(1)}$};
			\node (b) at (0.8,-1){$(\tau_{0,1})_{\az}^{(0)}$};
			\node (c) at (1.6,0){$(\tau_{1,1})_{\az}^{(1)}$};
			\node (d) at (2.4,-1){$\cdots$};
			\node (n) at (3.2, -1){$(\tau_{0,j_1})_{\az}^{(0)}$};
			\node (e) at (3.8,0){$(\tau_{1,j_1})_{\az}^{(1)}$};
			\node (o) at (4.4,0) {$\hspace{0.2em}\Rightarrow$};
			\node (f) at (5.1,0){$(\tau_{1,j_1})_{\ao}^{(1)}$};
			\node (g) at (5.8,1){$(\tau_{1,j_1+1})_{\ao}^{(2)}$};
			\node (h) at (6.6,0){$\hspace{0.2em} \cdots$};
			\node (p) at (6.7,0){};
			\node (i) at (7.4,1){$(\tau_{2,j_2})_{\ao}^{(2)}$};
			\node (j) at (8,1){$\hspace{0.2em}\Rightarrow$};
			\node (k) at (8.7,1){$(\tau_{2,j_2})_{\alpha_2}^{(2)}$};
			\node (q) at (9.4,2){$(\tau_{3,j_2+1})_{\alpha_2}^{(3)}$};
			\node (r) at (10.1,1){$\hspace{0.2em}\cdots$};
			\node (s) at (10.2,1) {};
			\node (l) at (10.9,2){$(\tau_{3,j_2+r})_{\alpha_2}^{(3)}$};
			\node (m) at (11.6,1){$(\tau_{2,j_2+r})_{\alpha_2}^{(2)}$};
			
			\draw[->] (a) -- (b);
			\draw[>->] (b) -- (c);
			\draw[->] (c) -- (d);
			\draw[>->] (n) -- (e);
			\draw[>->] (f) -- (g);
			\draw[->] (g) -- (h);
			\draw[>->] (p) -- (i);
			\draw[>->] (k) -- (q);
			\draw[->] (q) -- (r);
			\draw[>->] (s) -- (l);
			\draw [->] (l) -- (m);
			
		\end{tikzpicture}
		\caption{A generalised trajectory of the second kind, initiating from $(\tau_{1,0})_{\az} \in L_{q+1}$ and terminating at $(\tau_{2,j_2+r})_{\alpha_2} \in L_q$. For this particular trajectory, $i=1$, $t=2$ and $\alpha_2<_f \ao <_f \az$. The weight of an arrow `$\sigma \rightarrow \tau$' contributes $\langle \tau, \sigma\rangle$, the weight of an arrow `$\sigma\rightarrowtail\tau$' contributes $-\langle \tau, \sigma\rangle$ while the weight of an arrow `$\sigma_\beta \Rightarrow \sigma_\alpha$' contributes $(-1)^{\dim(\sigma)}\langle\beta, \alpha\rangle$. Thus the weight of such a generalised trajectory turns out to be the product of the weights of all the arrows in the generalised trajectory, which is precisely as defined in \autoref{gt}\ref{second}. }\label{gen}
	\end{figure}

	Next, for each $q \ge 0$, $\Lq_q(X, \Z)$ is the free module generated by $L_q(X)$ over $\Z$. Here onwards, we will be working with the coefficient ring $\Z$ for all our purposes. So, for the sake of simplicity, we will write any chain group or homology group $D_{\#}(X, \Z)$ as simply $D_{\#}(X)$.
	We denote the set of all generalised trajectories from $\gamma \in  L_{q+1}$ to $\delta \in L_{q}(X)$ as $\GT(\gamma, \delta)$. Next, for each $q \ge 0$, we define a map $\pL_{q+1}: \Lq_{q+1}(X) \rightarrow \Lq_q(X) $, as follows.
	
	Let $\tau^{(i)}_{\alpha_0} \in L_{q+1}(X)$ and $\alpha_t^{(q+1-i-t)} <_f \alpha_{t-1} <_f \dots <_f\alpha_0^{(q+1-i)} \in \N$ where $0 \le i \le (q+1)$ and $0 \le t \le (q+1-i)$. Then,
	$$
	\pL_{q+1}(\tau^{(i)}_{\alpha_0}):=
	\sum \limits_{t=0}^{q+1-i} \sum\limits_{\sigma^{(i+t-1)}_{\alpha_t} \in L_q(X)} \left(\sum_{P \in \GT(\tau_{\alpha_0},\sigma_{\alpha_t})} w_G(P)\right) \sigma_{\alpha_t}.
	$$

	We extend this linearly to all of $\Lq_{q+1}(X)$.
	
	Now, we state our main result in terms of the above setup.
	\begin{thm}\label{main}
		$(\Lq_q(X), \pL_q)_{q \ge 0}$ is a chain complex and $H^{\Lq}_q(X) \cong H_q(X)$ for each $q = 0, \dots, d$, where $H_q(X)$ is the $q^{th}$ simplicial homology group of $X$ while $H^{\Lq}_q(X)$ is the $q^{th}$ homology group of $\Lc$.
	\end{thm}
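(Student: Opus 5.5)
The plan is to realise $\Lc$ as the Morse complex of a gradient vector field on a larger auxiliary complex whose homology is already known to be $H_\#(X)$. Concretely, I would build the \emph{simplotopal} (Mayer--Vietoris blow-up) complex
$$\wX := \{(\sigma,\alpha) \mid \alpha \in \N,\ \sigma \in A_\alpha\},$$
whose cells are the products $\sigma \times \alpha$ of dimension $\dim\sigma+\dim\alpha$. Its boundary is the product boundary
$$\partial(\sigma,\alpha)=\sum_{\sigma'<_f\sigma}\langle\sigma,\sigma'\rangle(\sigma',\alpha)+(-1)^{\dim\sigma}\sum_{\beta<_f\alpha}\langle\alpha,\beta\rangle(\sigma,\beta).$$
This sign convention is exactly the one encoded by the arrow weights in \autoref{gen}: the arrow $\sigma_\alpha\Rightarrow\sigma_\beta$ carries $(-1)^{\dim\sigma}\langle\alpha,\beta\rangle$. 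The cells of $\wX$ of total dimension $q$ are precisely the $\sigma_\alpha$ with $\dim\sigma+\dim\alpha=q$, matching the indexing of $L_q(X)$.

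\textbf{Step 1: $H_\#(\wX)\cong H_\#(X)$.} The projection $(\sigma,\alpha)\mapsto\sigma$ when $\alpha$ is a vertex, and $0$ otherwise, is a chain map to $C_\#(X)$. I would show it is a quasi-isomorphism combinatorially. Filter $C_\#(\wX)$ by simplices of $X$. For fixed $\sigma$, the fibre is the chain complex of the full simplex on $\{i : \sigma\in A_i\}$, which is nonempty and hence acyclic with $H_0=\Z$. Equivalently, there is an explicit chain homotopy obtained by choosing, for each $\sigma$, the minimal index $i(\sigma)$ with $\sigma \in A_{i(\sigma)}$ and coning towards it. This is the standard Mayer--Vietoris double complex argument, and it can be phrased without geometry. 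It can equally be packaged as a gradient vector field on $\wX$ pairing $(\sigma,\alpha)$ with $(\sigma,\alpha\cup\{i(\sigma)\})$ for $i(\sigma)\notin\alpha$. The critical cells of that field are the $(\sigma,\{i(\sigma)\})$, and they reproduce $C_\#(X)$.

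\textbf{Step 2: the glued field $\W$ on $\wX$.} Set $\W:=\bigcup_{\alpha}\{((\sigma,\alpha),(\tau,\alpha)) : (\sigma_\alpha,\tau_\alpha)\in\oW\}$. Each $\bar A_\alpha$ sits in $\wX$ as the fibre over $\alpha$, so $\W$ is a discrete vector field. It is acyclic because every $\W$-pair preserves the $\alpha$-coordinate, while boundary steps either keep $\alpha$ or strictly decrease it. A closed trajectory therefore stays in a single fibre and would be a closed $\oW$-trajectory, which is impossible. The $\W$-critical cells of dimension $q$ are exactly $L_q(X)$. By the fundamental theorem of discrete Morse theory for regular cell complexes (which I would reprove algebraically for simplotopal complexes via the standard Morse chain-equivalence, since we avoid geometry), the Morse complex is a chain complex whose homology equals $H_\#(\wX)$. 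The Morse complex is spanned by critical cells, with differential given by summing weights of gradient paths $\partial\circ(\text{flow})^{*}$. Together with Step 1, this already yields both claims of \autoref{main}, provided the Morse differential coincides with $\pL$.

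\textbf{Step 3 (main obstacle): identification of $\pL$ with the Morse differential.} A $\W$-gradient path from $\tau_{\alpha_0}$ alternates between a down-step, which is a face in either factor, and an up-step along some $\oW_{\alpha}$. Down-steps in the $\sigma$-factor are the $\rightarrow$ arrows. Down-steps in the $\alpha$-factor are the $\Rightarrow$ arrows, and after such a step the path must continue within the new fibre. I would check case by case that the admissible sequences are exactly the generalised trajectories. Paths with no $\Rightarrow$ are those of the first kind, together with the $t=0$ term. Paths with $t$ jumps are those of the second kind, where the dimension bookkeeping forces the $\sigma$-dimension to rise by one at each jump, giving the indices $i,\dots,i+t$. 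I would then verify that the product of the local incidence signs, $\langle\cdot,\cdot\rangle$ for $\rightarrow$, $-\langle\cdot,\cdot\rangle$ for $\rightarrowtail$, and $(-1)^{\dim}\langle\alpha_{l-1},\alpha_l\rangle$ for $\Rightarrow$, collapses to $(-1)^{\phi(i,t)}$ times the stated products. The $l$-th jump occurs at dimension $i+l-1$, so the accumulated sign is $\sum_{l=1}^{t}(i+l-1)=it+\tfrac{t(t-1)}{2}$. This sign bookkeeping, and making sure no path is double counted or missed (in particular the degenerate $j_l$ and $r=0$ cases), is where I expect most of the effort to go.
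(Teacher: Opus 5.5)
Your proposal is correct and follows essentially the same route as the paper. The paper builds the simplotopal complex $\wX$ and identifies $H_{\#}(\wX)\cong H_{\#}(X)$ via the gradient field pairing $(\sigma,\alpha)$ with $(\sigma,\alpha\sqcup\{i_\sigma\})$, which is the packaging you mention, rather than your filtration argument. It then glues the $\Wo_\alpha$ into a gradient field $\W$ whose critical cells are $L_q(X)$, and matches $\W$-trajectories bijectively and weight-preservingly with generalised trajectories, with the jump signs accumulating to $(-1)^{\phi(i,t)}$; the only cosmetic difference is that you prove acyclicity of $\W$ by monotonicity of the $\alpha$-coordinate, whereas the paper argues via the increase of $\dim(\tau_j)$ after the first jump.
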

	Another notable aspect of our theorem is that, here we do not even need to know the individual homology groups of the subcomplexes or of their intersections, which reduces significant number of computational steps. It is also noteworthy here, that if we choose the subcomplexes wisely, and assign \emph{good} gradient vector fields (with lower number of critical simplices), then it enhances our computational efficiency. However in principle, this theorem can always compute the homology of the simplicial complex, irrespective of our choices.
	
	Our paper is organised in the following manner. In Section~\ref{prelim}, we introduce some basic prerequisites needed to present our main result. In Section~\ref{dmt}, we introduce the concept of discrete Morse theory in simplotopal complex, which is the central tool used in our proof. Finally, the main theorem of our paper is proved in Section~\ref{thm}. In Section~\ref{app}, we prove a version of the usual homological nerve theorem, using our result (\autoref{main}). Towards the end, in Section~\ref{red}, we show how this chain complex $\Lc$ can be further reduced using a given gradient vector field on the nerve complex. Section~\ref{algo} demonstrates the computational aspects of our combinatorial nerve theorem. Here, we provide algorithms for the computation of homology groups of a simplicial complex using our theorems and finally conclude by computing the homology groups of the $5 \times 5$ chessboard complex (i.e., the matching complex of the complete bipartite graph $\K$) using the aforementioned algorithms, in Python $3.12.7$ and SageMath $9.3$.
	
	\section{Preliminaries}\label{prelim}
	\subsection{Simplicial complex}
	An \emph{(abstract) simplicial complex} $X$ is a collection of finite  sets with the property that for each $\tau \in X$, if $\sigma \subseteq \tau$, then $\sigma \in X$. A \emph{simplex} is an element of $X$. The \emph{dimension of a simplex} $\sigma$,  $\dim(\sigma):= |\sigma|-1$. An $i$-dimensional simplex is also referred to as an $i$-simplex. The \emph{dimension of $X$}, $\dim(X):= \max_{\sigma \in X} \dim(\sigma)$. A simplicial complex $Y$ is a \emph{subcomplex} of $X$ if $Y \subseteq X$.
	
	\begin{defin}{(Orientation on a simplex)}
		Let $X$ be a simplicial complex and $\sigma= \{v_0, \dots ,v_n\} \in X$. We define an equivalence relation on the set of all possible orderings of $v_0, \dots ,v_n$ such that for any two orderings $\gamma, \delta$, $\gamma \sim \delta$ if and only if $\gamma$ and $\delta$ differ by an even permutation. An \emph{orientation} on $\sigma$ is an ordering which represents an equivalence class under this relation. Thus, there are two distinct orientations of a simplex. A simplex $\{v_0, \dots ,v_n\}$ with an orientation represented by the ordering, $v_0, \dots ,v_n$ is an \emph{oriented} simplex, which we denote as $[v_0, \dots ,v_n]$.
	\end{defin}
	A simplex $\{v_0, \dots, v_n\} \setminus \{v_i\}$ equipped with the ordering $v_0, \dots ,v_{i-1}, v_{i+1}, \dots ,v_n$ is denoted as $[v_0, \dots , \widehat{v}_i, \dots v_n]$.
	\begin{defin}{(Incidence number)}
		Let $\tau, \sigma \in X$ such that $\tau=[v_0, \dots,v_n]$ and $\dim(\sigma)= \dim(\tau)-1$. Then, the \emph{incidence number} of $\sigma$ with respect to $\tau$,
		$$ \langle \tau, \sigma\rangle:= \begin{cases}
			0 & \text{ if } \sigma \nsubseteq \tau,\\
			(-1)^{i} & \text{ if } \sigma = [v_0, \dots, \widehat{v}_i, \dots ,v_n]\\
		\end{cases} $$
	\end{defin}
	Let $S_q(X)$ represent the set of all oriented $q$-simplices of $X$.
	Now, for each $q \ge 0$, the $q^{th}$ \emph{simplicial chain group} $C_q(X)$ is the free module generated by $S_q(X)$ over $\Z$, that is, $C_q(X)= \Z\langle S_q(X) \rangle$. The $(q+1)^{th}$ boundary map $\partial_{q+1}: C_{q+1}(X) \rightarrow C_q(X)$,
	$$ \partial_{q+1}(\tau):= \sum_{\sigma \in S_{q}(X)} \langle \tau, \sigma \rangle \sigma, \text{    for each    } \text{ } \tau \in S_{q+1}(X),$$

	We now extend it linearly to $C_{q+1}(X)$. 
	It can be shown that $\partial \circ \partial=0$, that is, $(C_{\#},\partial_{\#})$ is a chain complex. The \emph{$q^{th}$ simplicial homology group},
	$$ H_{q}(X):= \frac{\Ker(\partial_{q})}{ \operatorname{Im}(\partial_{q+1})}.$$
	
	\subsection{Isomorphism and homotopy equivalence of chain complexes}
	Suppose, we have two chain complexes $(P_{\#}, \partial^P_{\#})$, $(Q_{\#}, \partial^Q_{\#})$. A \emph{chain map} between these two chain complexes is a sequence of module homomorphisms $f_{\#}= \{f_n\}_{n \ge 0}$, $f_n: P_n \rightarrow Q_n$, such that,
	
	$$ f_{n}\circ \partial^{P}_{n+1}=\partial^Q_{n+1}\circ f_{n+1}  \text{ for each } n \geq 0.$$
	
	\[\begin{tikzcd}[sep=large]
		\cdots & { P_{n+1}} && {P_{n}} && {P_{n-1}} & \cdots & \\
		\\
		\cdots & {Q_{n+1}} && {Q_{n}} && {Q_{n-1}} & \cdots \\
		\\
		&&&&&&& {}
		\arrow[from=1-1, to=1-2]
		\arrow["{\partial_{n+1}^{P}}", from=1-2, to=1-4]
		\arrow["{f_{n+1}}"', from=1-2, to=3-2]
		\arrow["{g_{n+1}}", shift left=3, from=1-2, to=3-2]
		\arrow["{\partial_{n}^{P}}", from=1-4, to=1-6]
		\arrow["{\lambda_{n}}"', from=1-4, to=3-2]
		\arrow["{f_{n}}"', from=1-4, to=3-4]
		\arrow["{g_n}", shift left=3, from=1-4, to=3-4]
		\arrow[from=1-6, to=1-7]
		\arrow["{\lambda_{n-1}}"', from=1-6, to=3-4]
		\arrow["{f_{n-1}}"', from=1-6, to=3-6]
		\arrow["{g_{n-1}}", shift left=3, from=1-6, to=3-6]
		\arrow[from=3-1, to=3-2]
		\arrow["{\partial_{n+1}^{Q}}"', from=3-2, to=3-4]
		\arrow["{\partial_{n}^{Q}}"', from=3-4, to=3-6]
		\arrow[from=3-6, to=3-7]
	\end{tikzcd}\]
	Let $f_{\#}$ and $g_{\#}$ be two chain maps between the chain complexes $(P_{\#}, \partial^P_{\#})$ and $(Q_{\#}, \partial^Q_{\#})$. Then $f_{\#}$ and $g_{\#}$ are \emph{homotopic}, denoted by $f_{\#} \simeq g_{\#}$, if there exists a sequence of module homomorphisms $\lambda_{n}: P_n \rightarrow Q_{n+1}$ such that the following holds.
	$$ \lambda_{n-1}\circ \partial_{n}^P + \partial_{n+1}^Q\circ \lambda_{n} = f_{n} - g_{n}, \text{  for all } n \geq 1.$$
	
	$(P_{\#}, \partial^P_{\#})$ and $(Q_{\#}, \partial^Q_{\#})$ are called \emph{homotopy equivalent}, if there exists chain maps $f_{\#}:P_{\#} \rightarrow Q_{\#}$ and $h_{\#}: Q_{\#} \rightarrow P_{\#}$ such that $f_{\#} \circ h_{\#} \simeq \id_{Q_{\#}}$ and $h_{\#} \circ f_{\#} \simeq \id_{P_{\#}}$. Homotopy equivalent chain complexes have isomorphic homology groups. If there exists a chain map between two chain complexes, which is also an isomorphism, then the two chain complexes are isomorphic. If two chain complexes are isomorphic, then they are also homotopy equivalent.
	
	\subsection{Basics of algebraic Morse theory}\label{alg}
	We now introduce a generalisation of the idea of discrete Morse theory. The notion of acyclic pairing in this setup, generalises the idea of gradient vector field on simplicial complexes, while the idea of trajectories in discrete Morse theory translates to an analogous notion of paths in this setup.
	
	Let $\C$ be a free chain complex generated by a basis $C_{\#}$, over $\Z$. For any $y \in C_q$, $x \in C_{q-1}$, the \emph{incidence} of $x$ with respect to $y$ is the coefficient of $x$ in $\partial_q(y)$, denoted as $\langle y,x \rangle_{C}$.
	
	Now, we represent $\C$ as a ranked poset $(C, \leq)$, consisting of all the basis elements $\{x \in C_q\}_{q \ge 0}$. The \emph{rank} of any element $x$ is $\{q \mid x \in C_q\}$ and the order relation in this poset is given in the following manner. For each $y \in C_{q+1}$, $x \in C_q$, $x \leq y$ if and only if $\langle y, x\rangle_C \ne 0$. Next, a \emph{pairing} $\p$ on $(C, \le)$ is a collection of pairs of the form $\{(x,y) \mid x \in C_q, y \in C_{q+1}, \langle y,x \rangle_C = \pm1\}_{q \ge 0}$. Let $y \in C_{q+1}$, $x \in C_q$. Then a $\p$-\emph{path}, or simply a \emph{path} from $y$ to $x$ is a sequence of the form,
	\begin{equation}\label{z}
		\gamma: (y=)~y_0, x_1, y_1, \dots ,x_k, y_k, x_{k+1}~(=x),
	\end{equation}
	where, $x_{i}\le y_{i-1}$, $(x_{i}, y_{i-1}) \notin \p$ for each $i \in [k+1]$ and $(x_i, y_i) \in \p$ for each $i \in [k]$. If $y= y_r$ for some $r > 1$, then $\gamma$ is said to be \emph{cyclic}. $\p$ is \emph{acyclic} if there does not exist any cyclic $\p$-path. 
	
	Let $\p$ be an acyclic pairing on the chain complex $\C$ (i.e., on the poset $(C, \le)$). An element which does not appear in $\z$ is $\z$-\emph{critical} or simply critical. For each $q \ge 0$, the collection of all $\z$-critical elements of $C_q$ is denoted as $\CrP$. Given any $\p$-path $\gamma$ in $(C, \le)$ of the above form (refer to (\ref{z})), the weight of $\gamma$,
	
	$$ w_C(\gamma):= \left(\prod_{i=0}^{k-1}(-1)\langle y_i, x_{i+1}\rangle_C \langle y_{i+1}, x_{i+1} \rangle_C\right) \langle y_k, x_{k+1}\rangle_C.$$
	
	For $y \in C_{q+1}$, $x \in C_q$, we denote the set of all $\z$-paths from $y$ to $x$ as $\mathcal{P}(y,x)$.
	For each $q \ge 0$, $\mathcal{C}^{\p}_q$ is the free $\Z$-module generated by $\CrP$, that is, $\mathcal{C}^{\p}_q = \Z \langle \CrP\rangle$. For each $q \ge 0$, we define a map $\partial^{\p}_{q+1} : \mathcal{C}^{\p}_{q+1} \rightarrow \mathcal{C}^{\p}_q$ in the following manner.
	
	For each $y \in \CrPq$, $$\partial_{q+1}^{\p}(y):= \sum_{x \in \CrP} \left(\sum_{\gamma \in \mathcal{P}(y,x)}w_C(\gamma)\right)x.$$
	
	The next result (see \cite{Kozlov2005}) states that $\CP$ is indeed a chain complex and lays down the relation between $\C$ and $\CP$.
	
	\begin{thm}{\cite{Kozlov2005}}\label{koz}
		$\CP$ is a chain complex. Further, for each $q \ge 0$, $H^{\mathcal{C}}_q \cong H^{\mathcal{C}^{\p}}_q$, where $H^{\mathcal{C}}_q$ and $H^{\mathcal{C}^{\p}}_q$ represent the $q^{th}$ homology groups of $\C$ and $\CP$ respectively.
	\end{thm}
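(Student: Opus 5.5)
The plan is to reduce the statement to repeated elimination of one pair at a time, then to identify the boundary of the resulting complex with the path-weight formula defining $\partial^{\p}$. I assume, as everywhere in this paper, that $C_{\#}$ is finite in each degree.

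\textbf{Step 1 (single pair).} Take one pair $(x,y)\in\p$ with $x\in C_q$, $y\in C_{q+1}$ and $\langle y,x\rangle_C=\varepsilon=\pm1$. Change basis in degree $q$ by replacing $x$ with $x':=\partial_{q+1}(y)$; this is a basis change because the coefficient of $x$ in $\partial_{q+1}(y)$ is a unit. In the new basis the subcomplex $0\to\Z\langle y\rangle\to\Z\langle x'\rangle\to 0$ is acyclic and is a direct summand. Its complement is spanned by $C_{\#}\setminus\{x,y\}$ and carries the boundary
$$\partial'(b)=\partial(b)-\varepsilon\,\langle b,x\rangle_C\,\partial(y),$$
where the $x$-component is discarded. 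This is the usual Gaussian elimination lemma. It gives a chain complex on $C_{\#}\setminus\{x,y\}$ with the same homology as $\C$, and $\partial'\circ\partial'=0$ holds automatically since this complex is a direct summand of a chain complex.

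\textbf{Step 2 (ordering the eliminations).} Acyclicity of $\p$ means there is no cyclic $\p$-path. Hence the relation ``there is a $\p$-path from $y$ to $y'$'' is a partial order on the paired upper elements in each degree, and I fix a linear extension of it. I then eliminate the pairs one at a time in an order compatible with this linear extension.

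To make the induction work I must check two things after each elimination.
\begin{enumerate}[label=(\alph*)]
\item The remaining pairs still have incidence $\pm1$ in the new boundary $\partial'$, so they can still be eliminated.
\item The remaining pairing is still acyclic.
\end{enumerate}
For (a), the correction term in $\partial'$ touches the incidence between a remaining $y'$ and its partner $x'$ only through a path $y'\to x\rightarrowtail y\to x'$. Such a path would either create a cycle or contradict the chosen order, so the incidence is unchanged. For (b), any new edge in the reduced poset comes from a path through the eliminated pair, so a cycle in the reduced poset would lift to a cycle in $\p$.

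\textbf{Step 3 (weights).} Show by induction on the number of eliminated pairs that the current incidence between two surviving elements $b$ and $a$ equals
$$\sum_{\gamma}w_C(\gamma),$$
summed over $\p$-paths from $b$ to $a$ whose intermediate pairs are all already eliminated. Each elimination step multiplies the contributions by the factor $-\varepsilon\langle b,x\rangle\langle y,a\rangle$. Since $\varepsilon=\langle y,x\rangle=\varepsilon^{-1}$, this factor equals $(-1)\langle b,x\rangle\langle y,x\rangle\langle y,a\rangle$, which is exactly the factor appended to $w_C$ when a path is extended through $x\rightarrowtail y$. Concatenating paths then matches the recursive update of the incidences. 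Once all pairs are eliminated, the surviving basis is the set of $\z$-critical elements and the boundary is $\partial^{\p}$. Therefore $\CP$ is a chain complex, and it is homotopy equivalent (indeed a direct summand up to isomorphism) to $\C$, which gives $H^{\mathcal{C}}_q\cong H^{\mathcal{C}^{\p}}_q$.

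\textbf{Main obstacle.} I expect the hardest part to be the bookkeeping in Steps 2–3. One must show that every $\p$-path is counted exactly once, independent of the elimination order, and that no path passes through a pair twice, which is where acyclicity is used. If the inductive matching becomes messy, the fallback is to define the chain equivalence directly. Let $\Phi=\sum_{n\ge0}(-h\partial)^n$, where $h$ sends $x\mapsto\varepsilon y$ on paired elements and is $0$ otherwise; this series is finite by acyclicity and finiteness. Then $\partial^{\p}=\pi\,\partial\,\Phi$ on critical elements, where $\pi$ is projection onto the critical elements, and the identities $\partial^{\p}\circ\partial^{\p}=0$ and the homotopy relations are verified by telescoping these sums.
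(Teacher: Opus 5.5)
The paper does not prove this theorem; it only quotes it from Kozlov. Your Steps~1--3 are nevertheless a correct proof, and they follow the standard route to the cited result: split off the contractible atom $0\to\Z y\to\Z\,\partial y\to 0$ one matched pair at a time, and show by induction that each surviving incidence is a sum of path weights over paths whose intermediate pairs have already been removed.

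A small precision on Step~1. In degree $q+1$ the complementary summand is spanned by $b-\varepsilon\langle b,x\rangle_C\,y$ (with $\varepsilon=\langle y,x\rangle_C$), not by $b$ itself. Alternatively, pass to the quotient by the atom. Your formula for $\partial'$ is right either way.

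The linear extension in Step~2 is unnecessary, because acyclicity alone works for any elimination order:
\begin{itemize}
\item If removing $(x,y)$ changed the incidence of a remaining pair $(x'',y')$, then $y',x,y,x'',y'$ would be a cyclic path in the current complex. That complex is acyclic by your (b).
\item A path using $(x,y)$ twice would contain a cyclic path between its two visits to $y$.
\end{itemize}
Hence every path through the newly removed pair splits uniquely as $\gamma_1$, then $x\rightarrowtail y$, then $\gamma_2$. Its weight is $w_C(\gamma_1)\,(-\langle y,x\rangle_C)\,w_C(\gamma_2)$, which is exactly your update factor. This settles the bookkeeping you flagged as the main obstacle.

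Your fallback, however, is wrong as stated. Take an upper element $y$ with partner $x$. Then $\partial y$ contains $\varepsilon x$, and $h(\varepsilon x)=\varepsilon^2y=y$. So on the span of upper elements, $-h\partial$ is $-\mathrm{id}$ plus a nilpotent part, and $\sum_n(-h\partial)^n$ never terminates. For instance, with $\partial b=\partial y=x$ and the single pair $(x,y)$, you get $\Phi(b)=b-y+y-y+\cdots$.

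The fix is to remove the matched incidences first. Define $\partial_{\mathcal{Z}}(y)=\langle y,x\rangle_C\,x$ for $(x,y)\in\mathcal{Z}$, and $\partial_{\mathcal{Z}}=0$ on all other basis elements. Then take $\Phi=\sum_n\bigl(-h(\partial-\partial_{\mathcal{Z}})\bigr)^n$. The operator $h(\partial-\partial_{\mathcal{Z}})$ is nilpotent by acyclicity and finiteness, so the series is finite. On critical elements, $\pi\partial\Phi$ reproduces $\partial^{\mathcal{Z}}$ term by term.

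The two routes buy different things. This homological-perturbation version gives closed-form chain maps and homotopies, with no induction over an elimination order. Your elimination argument is elementary and gives an explicit splitting of the original complex as the Morse complex plus a direct sum of atoms. Since Step~3 already completes the proof, you can simply drop the fallback.
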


	\subsection{Simplotopal complex}
	
	Let $V_1, \dots ,V_m$ be a collection of finite mutually disjoint sets. A \emph{simplotopal complex} $K$ (see \cite{Meunier}) is a finite collection of non-empty sets, $\{(\tau_1, \dots, \tau_m) \mid \emptyset \ne \tau_i \subseteq V_i\}$ with the property that for each $i \in [m]$, if $\emptyset \ne \sigma_i \subseteq \tau_i$, then $(\sigma_1,\sigma_2, \dots ,\sigma_m)$ is also a member of this collection. A \emph{simplotope} is an element of $K$. Let $\tau=(\tau_1, \dots,\tau_m)\in K$. For each $\tau_i \subseteq V_i$, the dimension of $\tau_i$, $\dim(\tau_i):=|\tau_i| -1 $.  The \emph{dimension} of $\tau$ is, $$ \dim(\tau):= \sum_{i=1}^{m}\dim(\tau_i).$$
	The dimension of the simplotopal complex $K$, $\dim(K):=\max\{ \dim(\tau) \mid \tau \in K\}$. 	A $q$-dimensional simplotope $\tau$ is denoted as $\tau^{(q)}$. Let $\tau= (\tau_1,\tau_2, \dots ,\tau_m)$, $\sigma= (\sigma_1, \dots ,\sigma_m)  \in K$ such that $\emptyset \ne \sigma_i \subseteq \tau_i$ for each $i \in [m]$ and $\dim(\sigma) < \dim(\tau)$. Then $\sigma$ is a \emph{face} of $\tau$. In addition, if $\dim(\sigma)= \dim(\tau) -1$, then $\sigma$ is a \emph{facet} of $\tau$.
	Let $\tau=( \tau_1, \dots ,\tau_m) \in K$, where $\tau_i=\{v_{i0}, v_{i1}, \dots ,v_{ip_i}\}$ for each $i \in [m]$, and $\sigma$ be a facet of $\tau$. Then we can observe that $\sigma$ is of the form $(\sigma_1, \dots ,\sigma_m)$, where $\sigma_j= \tau_j \setminus \{v_{ji}\}$, $i \in \{0, \dots ,p_j\}$ for a unique $j \in [m]$, and $\sigma_k=\tau_k $ for all $k \ne j$, $k \in [m]$.

	We make a note here, that we will sometimes denote a simplotope $\sigma=(\sigma_1, \dots, \sigma_m)$ as simply $\sigma$, for convenience. This will not cause any notational ambiguity, as the meaning will be made clear from the context.
	
	\begin{defin}{(Orientation on a simplotope)}
		Let $K$ be a simplotopal complex. Suppose $\sigma= (\sigma_1, \dots, \sigma_m) \in K$. Then we define an \emph{ordering} as a permutation in $S_{\sigma_1} \times S_{\sigma_2} \times \dots \times S_{\sigma_m}$, where $S_{\sigma_i}$ denotes the permutation group on $\sigma_i$ for each $i$. Let $\alpha= \cup_{i=1}^m \sigma_i$. Now, we define an equivalence relation on $S_{\sigma_1} \times S_{\sigma_2} \times \dots \times S_{\sigma_m}$ as follows. Let $\gamma, \delta \in S_{\sigma_1} \times S_{\sigma_2} \times \dots \times S_{\sigma_m}$. Then $\gamma \sim \delta$ if and only if $\gamma$ and $\delta$ differ by an even permutation in $S_{\alpha}$. An ordering on $\sigma$ represented by each equivalence class under this relation is an \emph{orientation} on the simplotope $\sigma$. Thus, if $\dim(\sigma) > 0$, then there are precisely two equivalence classes in $S_{\sigma_1} \times S_{\sigma_2} \times \dots \times S_{\sigma_m}$, each of which represent an orientation of $\sigma$.
	
		A simplotope equipped with an orientation is an \emph{oriented simplotope}. We denote the collection of all $q$-dimensional oriented simplotopes of $K$ as $S_q(K)$.	
	\end{defin}
	
	\begin{defin}{(Incidence number)}
		Let $\tau=(\tau_1, \dots , \tau_m) \in S_{q+1}(K)$,  where $\tau_j= \{v_{j0}, v_{j1}, \dots ,v_{jp_j}\}$ for each $j \in [m]$. Let $\sigma= (\sigma_1, \dots, \sigma_m) \in S_q(K)$ such that $\sigma$ is a facet of $\tau$ and $\sigma$ is equipped with the orientation represented by the ordering $v_{10}, v_{11}, \dots ,v_{1p_1}, \dots  ,v_{j0}, \dots, \widehat{v}_{ji}, \dots ,v_{jp_j}, \dots ,v_{m0}, v_{m1}, \dots ,v_{mp_m}$, where $\widehat{v}_{j_i}$ represents the removal of $v_{j_i}$ from the sequence. Then the \emph{incidence number} of $\sigma$ with respect to $\tau$,
		$$ \langle \tau, \sigma\rangle:=
		(-1)^{\left(\sum\limits_{k=1}^{j-1}\dim(\sigma_k)\right)+i}.$$
		However, if $\sigma \in S_q(K)$ is not a facet of $\tau$, then $\langle \tau, \sigma \rangle:=0$.
	\end{defin}
	The next observation follows naturally from the definition given above.
	\begin{obs}\label{inc}
		Let $\tau=(\tau_0, \tau_1) \in K$ and $\sigma=(\sigma_0, \sigma_1)$ be a facet of $\tau$, where $\tau_0, \tau_1 \in X$ for some simplicial complex $X$. Then the following holds.
		\begin{enumerate}
			\item If $\sigma_0 <_f \tau_0$ and $\tau_1=\sigma_1$, then $\langle \tau, \sigma \rangle = \langle \tau_0, \sigma_0 \rangle$.
			\item If $\tau_0=\sigma_0$ and $\sigma_1 <_f \tau_1$, then $\langle \tau, \sigma \rangle = (-1)^{\dim(\tau_0)} \langle \tau_1, \sigma_1 \rangle$.
		\end{enumerate}
	\end{obs}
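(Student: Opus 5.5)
The statement to prove is the final one displayed, Observation~\ref{inc}. It follows directly from the definition of the incidence number on simplotopes, specialised to $m=2$.

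Take $\tau=(\tau_0,\tau_1)$, written with the ordering
$$v_{00},\dots,v_{0p_0},\,v_{10},\dots,v_{1p_1},$$
so that $\dim(\tau_0)=p_0$. The orientation of each factor $\tau_j$ as a simplex of $X$ is read off from the corresponding block of this ordering. Likewise, the orientation of $\sigma$ is the one induced by deleting a single vertex from $\tau$, with the blocks of $\sigma_0$ and $\sigma_1$ oriented as in the incidence definition.

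For case (1), the deleted vertex is $v_{0i}$, lying in the first block, so $j=1$ in the general formula. The exponent is then $\left(\sum_{k<1}\dim(\sigma_k)\right)+i$. This sum is empty, so $\langle\tau,\sigma\rangle=(-1)^i$. Since $\sigma_0=[v_{00},\dots,\widehat{v}_{0i},\dots,v_{0p_0}]$, the simplicial incidence number $\langle\tau_0,\sigma_0\rangle$ is also $(-1)^i$, and the two agree.

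For case (2), the deleted vertex is $v_{1i}$, lying in the second block. The exponent is now $\dim(\sigma_0)+i$, and since $\sigma_0=\tau_0$ we have $\dim(\sigma_0)=\dim(\tau_0)$. Hence $\langle\tau,\sigma\rangle=(-1)^{\dim(\tau_0)}(-1)^i=(-1)^{\dim(\tau_0)}\langle\tau_1,\sigma_1\rangle$.

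The only point needing care is that the orientations on the factors are compatible with the chosen orientations of $\tau$ and $\sigma$. This can be dealt with in two ways. One is to fix these orientations as above, block by block. The other is to observe that a change of orientation in either block flips the sign on both sides of each identity simultaneously. The ordering convention is therefore the one genuine obstacle, and it is only bookkeeping.
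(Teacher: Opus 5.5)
Your proposal is correct and takes the paper's route: the paper only says the observation follows from the definition of the simplotopal incidence number, and you unpack that definition for two factors, getting exponent $i$ when the deleted vertex lies in the first block and exponent $\dim(\sigma_0)+i=\dim(\tau_0)+i$ when it lies in the second. One small imprecision in your last paragraph: reversing the orientation of the block that is not touched (e.g.\ $\tau_1=\sigma_1$ in case (1)) reverses both $\tau$ and $\sigma$, so it leaves both sides unchanged rather than flipping them, but the identities hold either way.
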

	For each $q \geq 0$, we define $C_q(K)$ as the free module generated by $S_q(K)$ over $\Z$. Let us now define a map $\partial_{q+1}: C_{q+1}(K) \rightarrow C_q(K)$ as,
	
	$$ \partial_{q+1}(\tau) := \sum_{\sigma \in S_q(K)}\langle \tau, \sigma\rangle \sigma \text{, for each } \tau \in S_{q+1}(K), $$
	and extend this map linearly to $C_{q+1}(K)$.
	The next proposition states that $(C_{\#}(K), \partial_{\#})$ is indeed a chain complex.
	\begin{prop}{\cite{Meunier}}
		$\partial_q \circ \partial_{q+1}=0$ for each $q \geq 0$.
	\end{prop}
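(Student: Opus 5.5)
We need to prove $\partial_q\circ\partial_{q+1}=0$ for the simplotopal boundary, where $\langle\tau,\sigma\rangle=(-1)^{(\sum_{k<j}\dim\sigma_k)+i}$ and $i$ is the position of the removed vertex inside the $j$-th block. The plan is a direct double-sum cancellation, organised by which blocks the two deletions occur in. The whole argument rests on one reformulation: $\langle\tau,\sigma\rangle$ is exactly the ordinary simplicial incidence sign for deleting a vertex from the concatenated ordering of all blocks, corrected by the number of blocks preceding block $j$.

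First check this. Removing $v_{ji}$ from the concatenated ordering puts it in global position $p=\sum_{k<j}|\tau_k| + i$. Since $|\tau_k|=\dim(\tau_k)+1$, we get $p=\sum_{k<j}\dim(\tau_k)+(j-1)+i$. For $k<j$ we have $\sigma_k=\tau_k$, so $\langle\tau,\sigma\rangle=(-1)^{p-(j-1)}$. Orientations are well defined because the equivalence is by even permutations of the concatenated set. Hence incidence numbers change sign exactly as for simplices, and it suffices to verify the vanishing for $\tau$ in a fixed reference ordering.

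Now expand $\partial_q\partial_{q+1}(\tau)$ as a sum over ordered pairs of deletions: vertex $a$ first, then vertex $b$. Each codimension-two face $\rho$ of $\tau$ arises from exactly two such ordered pairs, namely $a$ then $b$, or $b$ then $a$. Deletions that would empty a block do not produce simplotopes, so they do not occur. Both orders either stay within the complex or neither does, since the same two vertices end up removed. Write $g(x)$ for the global position of $x$ in $\tau$, and $\beta(x)$ for the index of the block containing $x$. Assume $g(a)<g(b)$.

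\emph{Order $a$ then $b$.} The signs are
\[
(-1)^{g(a)-(\beta(a)-1)}\cdot(-1)^{g(b)-1-(\beta(b)-1)} .
\]
The extra $-1$ appears because $b$ shifts down one position once $a$ is removed. The block index of $b$ is unchanged, since no block empties.

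\emph{Order $b$ then $a$.} The signs are
\[
(-1)^{g(b)-(\beta(b)-1)}\cdot(-1)^{g(a)-(\beta(a)-1)} .
\]
Here $a$ does not move.

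The two products differ by exactly a factor of $-1$, so the contributions cancel. This works whether or not $a$ and $b$ lie in the same block: the block-count correction depends only on $\beta(\cdot)$, which is unchanged by the other deletion. Summing over all $\rho$ then gives $\partial_q\partial_{q+1}(\tau)=0$.

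The main subtlety is justifying that the block indices and the block-dimension correction are unaffected by the other deletion. This holds because a facet never empties a block; that is built into the definition of a face, via $\emptyset\ne\sigma_i$. One should also handle dimension-$0$ blocks, where deletion is impossible. This follows directly from the same constraint.
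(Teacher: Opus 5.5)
Your argument is correct. The paper does not prove this proposition; it only cites Meunier. So your double-sum cancellation is a self-contained verification, not a variant of an argument in the text.

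The key step rewrites $\langle\tau,\sigma\rangle$ as the simplicial incidence sign for the concatenated ordering times $(-1)^{\beta(v)-1}$, where $\beta(v)$ is the block of the deleted vertex $v$. This reduces everything to the classical simplicial computation. Because no block can empty, $\beta$ is invariant under deletions, so the correction for an ordered pair of deletions is $(-1)^{\beta(a)+\beta(b)}$, which is symmetric in $a$ and $b$. The shift $g(b)\mapsto g(b)-1$ then supplies the cancelling sign, and same-block and different-block pairs are handled at once.

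The more conventional route reads $(-1)^{\sum_{k<j}\dim\tau_k}$ as the Koszul sign for a product of simplices:
\[
\partial(\tau_1\times\cdots\times\tau_m)=\sum_j(-1)^{\sum_{k<j}\dim\tau_k}\,\tau_1\times\cdots\times\partial\tau_j\times\cdots\times\tau_m .
\]
One then splits $\partial^2$ into same-factor terms and cross terms. The same-factor terms vanish by the simplicial identity. The cross terms for factors $j<j'$ occur twice with opposite signs, because deleting in factor $j$ lowers by one the prefix dimension seen by factor $j'$.

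That version is more conceptual. It identifies the chains of a simplotope with a tensor product of simplicial chain complexes and explains where this sign convention comes from. Yours is shorter and avoids the case split.
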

	This gives us the \emph{simplotopal chain complex} $(C_{\#}(K), \partial_{\#})$.
	The $q^{th}$ \emph{simplotopal homology} of $K$, $$H_q(K):= \frac{\Ker(\partial_q)}{\operatorname{Im}(\partial_{q+1})}.$$
	\section{Discrete Morse theory of simplotopal complex}\label{dmt}
	
	Let $K$ be a simplotopal complex. A \emph{discrete vector field} $\mathcal{V}$ on $K$ is a collection of pairs of simplotopes $(\sigma, \tau)$ in $K$ such that $\sigma$ is a facet of  $\tau$ and every simplotope in $K$ appears in at most one such pair.
	
	\begin{defin}
		A \emph{$\mathcal{V}$-trajectory} is a sequence of simplotopes of $K$ as follows.
		$$ P: \tau_0^{(q)}, \sigma_1^{(q-1)}, \tau_1^{(q)}, \dots , \sigma_k^{(q-1)}, \tau_k^{(q)}, \sigma_{k+1}^{(q-1)},  $$
		where $(\sigma_i^{(q-1)}, \tau_i^{(q)}) \in \mathcal{V}$ for each $i \in [k]$, and $\sigma_i$ is a facet of $\tau_{i-1}$, $(\sigma_i^{(q-1)}, \tau_{i-1}^{(q)}) \notin \mathcal{V}$ for each $i \in [k+1]$, $q \ge 1$.
	\end{defin}
	We say that $P$ \emph{closed} if $\tau_0= \tau_r$ for some $r > 1$. A discrete vector field $\mathcal{V}$ is a \emph{gradient vector field} if there are no closed $\mathcal{V}$-trajectories. We refer to $\tau_0$ as the \emph{initial simplotope} of $P$ and $\sigma_{k+1}$ as the \emph{terminal simplotope} of $P$.
	We will refer to a $\mathcal{V}$-trajectory as simply a trajectory whenever the gradient vector field is clear from the context. The set of all $\mathcal{V}$-trajectories in $K$ with initial simplotope $\tau$ and terminal simplotope $\sigma$ is denoted by $\Gamma(\tau, \sigma)$.
	Next, for a gradient vector field $\V$ on $K$, we define the weight of a $\V$-trajectory.
	\begin{defin}
		Let $\V$ be a gradient vector field on $K$ and $P$ be the following $\mathcal{V}$-trajectory.
		$$P: \tau_0^{(q)}, \sigma_1^{(q-1)}, \tau_1^{(q)}, \dots , \sigma_k^{(q-1)}, \tau_k^{(q)}, \sigma_{k+1}^{(q-1)}.$$
		
		Then the\emph{ weight} of $P$,
		$$ w(P):= \left(\prod_{i=0}^{k-1}(-1)\langle \tau_i, \sigma_{i+1}\rangle \langle \tau_{i+1}, \sigma_{i+1}\rangle\right)\langle \tau_k, \sigma_{k+1}\rangle.$$
	\end{defin}
	
	Let $\V$ be a gradient vector field on $K$. A simplotope which does not appear in $\mathcal{V}$ is termed as \emph{$\V$-critical}, or simply \emph{critical}. We denote the set of all $q$-dimensional $\V$-critical simplotopes as $\Crit^{\mathcal{V}}_q(K)$.
	
	For each $q \geq 0$, the $q^{th}$ \emph{simplotopal Thom--Smale chain group} of $K$ with respect to $\mathcal{V}$, $C^{\mathcal{V}}_q(K)$, is the free module generated by $\Crit^{\V}_q(K)$ over $\Z$. We define a linear map $\partial_{q+1}^{\mathcal{V}}: C^{\mathcal{V}}_{q+1}(K) \rightarrow C^{\mathcal{V}}_q(K)$ as follows. Let $\tau \in \Crit_{q+1}^{\mathcal{V}}(K)$. Then,
	
	$$ \partial_{q+1}^{\mathcal{V}}(\tau):= \sum_{\sigma \in \Crit^{\mathcal{V}}_q(K)}\left(\sum_{P \in \Gamma(\tau, \sigma)}w(P)\right)\sigma.$$

	\begin{prop}
		$(C_{\#}^{\V}, \partial^{\V}_{\#})$ is a chain complex.
	\end{prop}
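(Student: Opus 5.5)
We deduce the proposition from \autoref{koz}, applied to the simplotopal chain complex $(C_{\#}(K),\partial_{\#})$ with the pairing induced by $\V$.

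First, fix for every simplotope of $K$ one orientation. This gives a basis $C_q=S_q(K)$ (one oriented simplotope per cell) of the free chain complex $(C_{\#}(K),\partial_{\#})$, which is a chain complex by the proposition of \cite{Meunier}. With respect to this basis, the incidence $\langle y,x\rangle_C$ of \autoref{alg} is exactly the simplotopal incidence number $\langle y,x\rangle$. In particular, it is nonzero precisely when $x$ is a facet of $y$, and then it equals $\pm1$.

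Second, take $\p:=\V$. Every pair $(\sigma,\tau)\in\V$ has $\sigma$ a facet of $\tau$, so $\langle\tau,\sigma\rangle_C=\pm1$, and each basis element lies in at most one pair. Hence $\p$ is a pairing on the poset $(C,\le)$. Because $x\le y$ holds iff $x$ is a facet of $y$, a $\p$-path as in (\ref{z}) is literally the same sequence as a $\V$-trajectory. A cyclic $\p$-path is therefore the same as a closed $\V$-trajectory, and since $\V$ is a gradient vector field, $\p$ is acyclic. Likewise $\Crit^{\p}_q(C)=\Crit^{\V}_q(K)$, $\mathcal{P}(\tau,\sigma)=\Gamma(\tau,\sigma)$, and $w_C(\gamma)=w(\gamma)$, because the two weight formulas agree term by term.

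Consequently $\mathcal{C}^{\p}_q=C^{\V}_q(K)$ and $\partial^{\p}_{q+1}=\partial^{\V}_{q+1}$. By \autoref{koz}, $(C^{\V}_{\#},\partial^{\V}_{\#})$ is a chain complex, and as a by-product its homology is isomorphic to $H_q(K)$.

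The only real check is the identification of the pairing $\p$ with $\V$. It rests on two facts. First, the poset order $x\le y$ coincides with the facet relation, because simplotopal incidence numbers are always $\pm1$ on facets and never vanish there. Second, dimension-zero simplotopes need no special treatment: both the pairing and the paths are defined uniformly in all degrees. Everything else is bookkeeping.
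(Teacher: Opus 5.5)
Your argument is correct and follows the route the paper intends. The paper states this proposition without a separate proof and relies on \autoref{koz}, as it does for \autoref{forman}; your verification that $\V$ is an acyclic matching on the one-orientation-per-cell basis, whose paths, critical cells and weights coincide with the $\V$-trajectories, critical simplotopes and weights $w$, is exactly the identification the paper leaves implicit.
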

	Thus, we obtain the chain complex $(C^{\mathcal{V}}_{\#}(K), \partial_{\#}^{\mathcal{V}})$, which is referred to as the \emph{simplotopal Thom--Smale chain complex} on $K$. For each $q \ge 0$, the $q^{th}$ Thom--Smale homology group of $K$ with respect to $\V$, $$H_q^{\mathcal{V}}(K):= \frac{\Ker(\partial_q^{\mathcal{V}})}{\operatorname{Im}(\partial_{q+1}^{\mathcal{V}})}.$$
	The following result states a version of Forman's immensely powerful result (see \cite{Forman1, Forman2}), in the simplotopal setup, which follows from \autoref{koz}.
	\begin{thm}\label{forman}
		The simplotopal Thom--Smale homology group of $K$ (with respect to a gradient vector field $\V$) is isomorphic to the simplotopal homology group of $K$, that is, $H^{\V}_{\#}(K) \cong H_{\#}(K)$.
	\end{thm}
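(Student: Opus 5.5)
The plan is to deduce \autoref{forman} directly from \autoref{koz}. To do this, I will view the simplotopal chain complex $(C_{\#}(K), \partial_{\#})$ as a free chain complex equipped with a distinguished basis, and regard the gradient vector field $\V$ as an acyclic pairing on that basis. The expected outcome is that the algebraic Morse complex $\CP$ coincides with the simplotopal Thom--Smale complex $(C^{\V}_{\#}(K), \partial^{\V}_{\#})$, with the same generators and the same boundary maps.

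\textbf{Step 1: fixing a basis.} For every simplotope of $K$ I choose once and for all one of its two orientations. This produces a basis $C_q$ of $C_q(K)$ for each $q$, and $C_q(K)$ is free over $\Z$ on this finite set. I then use the same oriented representatives in every definition, namely the incidences $\langle\cdot,\cdot\rangle$, the weights $w(P)$ and $w_C(\gamma)$, and $\partial^{\V}$. With this choice the algebraic incidence $\langle y,x\rangle_C$, i.e. the coefficient of $x$ in $\partial_{q+1}(y)$, is exactly the simplotopal incidence number $\langle y,x\rangle$.

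\textbf{Step 2: comparing posets and pairings.} By the definition of the simplotopal incidence number, $\langle \tau,\sigma\rangle = \pm 1$ when $\sigma$ is a facet of $\tau$, and $\langle \tau,\sigma\rangle = 0$ otherwise. Consequently, $x \le y$ in the ranked poset $(C,\le)$ holds precisely when $x$ is a facet of $y$. Put $\p := \V$. Every pair $(\sigma,\tau) \in \V$ has $\sigma$ a facet of $\tau$, so $\langle \tau,\sigma\rangle_C = \pm1$, and $\p$ is a legitimate pairing.

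\textbf{Step 3: comparing paths, acyclicity and critical elements.}
\begin{enumerate}
\item A $\p$-path as in (\ref{z}) satisfies $x_i \le y_{i-1}$, $(x_i,y_{i-1}) \notin \p$ and $(x_i,y_i) \in \p$. This is word for word the definition of a $\V$-trajectory, so $\mathcal P(\tau,\sigma) = \Gamma(\tau,\sigma)$.
\item The two weight formulas are identical once the incidences are identified, so $w_C = w$.
\item A cyclic $\p$-path is exactly a closed $\V$-trajectory, and in both definitions the index satisfies $r>1$. Hence $\p$ is acyclic if and only if $\V$ is a gradient vector field.
\item The $\p$-critical basis elements are exactly the $\V$-critical simplotopes, so $\CrP = \Crit^{\V}_q(K)$.
\end{enumerate}
Together these show that $\mathcal C^{\p}_q = C^{\V}_q(K)$ and $\partial^{\p}_{q+1} = \partial^{\V}_{q+1}$.

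\textbf{Step 4: applying \autoref{koz}.} This theorem now gives two conclusions at once. First, $(C^{\V}_{\#}(K), \partial^{\V}_{\#})$ is a chain complex, which also settles the preceding proposition. Second, $H^{\V}_q(K) \cong H_q(K)$ for every $q$.

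\textbf{Expected main obstacle.} The only delicate point is the bookkeeping in Steps 1 and 2. Kozlov's framework requires a fixed basis, whereas $S_q(K)$ contains both orientations of each simplotope. I therefore have to check that restricting to the chosen orientations gives a genuine basis, and that the incidence formula $\langle\tau,\sigma\rangle$ is then the structure constant of $\partial$ in that basis. This amounts to noting that reversing the orientation of $\sigma$ flips the sign of the coefficient, consistently with $-\sigma$ being the other orientation.

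It is also worth noting that $\partial^{\V}$ does not depend on the chosen orientations, up to the induced sign change of the basis. Each intermediate simplotope $\sigma_{i+1}$ or $\tau_{i+1}$ occurs twice in $w(P)$, so its sign cancels. This is a side remark and is not needed for the isomorphism.
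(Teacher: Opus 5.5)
Your proposal is correct and takes essentially the same approach as the paper, which simply asserts that the theorem follows from \autoref{koz}. You supply the routine identification that the paper leaves implicit: a basis of chosen orientations, the pairing $\mathcal{Z}=\mathcal{V}$, paths equal to trajectories with equal weights, and matching critical cells. Your remark that this also yields the preceding chain-complex proposition is accurate.
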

	
	\section{Proof of the main theorem}\label{thm}
	In this section, we prove our main result (\autoref{main}) in two parts. Let $X$ be a simplicial complex with subcomplexes $A_1, \dots ,A_k$ such that $X=\bigcup_{i=1}^{k}A_i$. Let $\N$ be the nerve complex of $X$. We recall that for each $\alpha \in \N$, $A_{\alpha}= \bigcap_{i \in \alpha}A_i$. Suppose $\oW$ is a given gradient vector field on $\Aa$, where $\Aa$ represents a copy of $A_{\alpha}$, for each $\alpha \in \N$ as introduced in Section~\ref{intro}. In the first part, we construct a simplotopal complex $\widetilde{X}$ from $X$ and show that the the simplotopal homology of $\widetilde{X}$ is isomorphic to the simplicial homology of $X$. In the second part, we construct a new chain complex $\Lc$ on $X$ and use the gradient vector fields $\oW$ on $\Aa$ for each $\alpha \in\N$ to  show that the homology of $\Lc$ is isomorphic to the simplotopal homology of $\widetilde{X}$.
	
	\subsection{Construction of a simplotopal complex $\widetilde{X}$}\label{constr}
	For each $\alpha \in \mathcal{N}(X)$, we define $ \widetilde{A}_{\alpha}:= \{(\sigma, \alpha) \mid \sigma \in A_{\alpha}\}.$
	Now, we define a simplotopal complex $\widetilde{X}$ as,
	$$ \widetilde{X}:= \bigcup_{\alpha \in \mathcal{N}(X)}\widetilde{A}_{\alpha}.$$
	We can verify that $\wX$ is indeed a simplotopal complex. We note here that $\wA \cap \widetilde{A}_\beta = \emptyset$ for $\alpha \ne \beta$, $\alpha, \beta \in \N$. Next, we construct a gradient vector field $\mathcal{V}$ on $\widetilde{X}$.
	
	\subsection{Construction of a gradient vector field $\mathcal{V}$ on $\widetilde{X}$}
	
	We partition $\wX$ into some pairwise disjoint sets and define a collection of pairs on each of these sets. For each $\sigma \in X$, first let us define $\Lambda_{\sigma}:= \{i \in [k]\mid \sigma \in A_i\}$. Thus, $\Lambda_{\sigma} \in \mathcal{N}(X)$ for each $\sigma \in X$. Next, we define, for each $\sigma \in X$, $U_{\sigma}:= \{(\sigma, \alpha) \mid \alpha \subseteq \Lambda_{\sigma}\}$. We can observe that $U_{\sigma}\cap U_{\tau} = \emptyset$ for $\sigma \ne \tau$, $\sigma, \tau \in X$. We know that each $(\sigma, \alpha) \in U_\sigma$ is also an element of $\widetilde{A}_\alpha$, which follows from the definition of $\widetilde{A}_\alpha$. Now, we further note that if $(\sigma, \alpha) \in \wA$ for some $\alpha \in \N$, then $\sigma \in A_i$ for each $i \in \alpha$, which implies that $\alpha \subseteq \Lambda_\sigma$. Consequently, $(\sigma, \alpha) \in U_\sigma$. Therefore, it follows that,
	$$ \bigcup_{\sigma \in X}U_{\sigma} = \bigcup_{\alpha \in \mathcal{N}(X)}\widetilde{A}_{\alpha} = \wX.$$
	
	We define a collection of pairs $\overline{\mathcal{V}}_{\sigma}$ on $U_{\sigma}$ for each $\sigma \in X$ as follows. Suppose, $\Lambda_{\sigma}= \{i_0, \dots , i_r\}$, where $1 \leq i_0 < i_1 < \dots <i_r \leq k$. Let $(\sigma, \alpha) \in U_{\sigma}$. If $i_0 \notin \alpha$, then $\left((\sigma, \alpha), (\sigma, \alpha \sqcup \{i_0\})\right) \in \overline{\V}_{\sigma}$. Hence, the unpaired simplotope with respect to $\V_\sigma$ is precisely $(\sigma, \{i_0\})$. \autoref{gvf} illustrates $\overline{\V}_{\sigma}$ on one such segment $U_{\sigma}$, for $\sigma=\{a,b\}$ and $\Lambda_{\sigma}=\{1,2,3\}$. 
	\begin{figure}[h]
		\centering
		\begin{tikzpicture}[
			vertex/.style={draw, circle, fill=black, inner sep=0.05pt, minimum size=0.7mm},
			baseline]
			
			\fill[gray!60] (6,0) -- (3,3) -- (3,4) -- (6,1) -- cycle;
			\fill[gray!30] (3,3) -- (3,4) -- (0,1) -- (0,0) -- cycle;
			\fill[gray!10] (0,0) -- (0,1) -- (6,1) -- (6,0)-- cycle;
			\draw[pattern=dots] (0,1) -- (3,4) -- (6,1) -- cycle;
			\draw[pattern=dots] (0,1) -- (0,0) -- (6,0) -- (6,1) -- cycle;
			\node[vertex, label=left:{$(\{b\},\{1\})$}] (v1) at (0,0) {};
			\node [vertex, label=right:{$(\{b\},\{2\})$}] (v2) at (6,0) {};
			\node [vertex, label={[fill=white, inner sep=1pt, yshift=-2pt]below :{\small$(\{b\},\{3\})$}}] (v3) at (3,3) {};
			\node [vertex, label=left:{$(\{a\},\{1\})$}] (v4) at (0,1) {};
			\node [vertex, label=above:{$(\{a\},\{3\})$}] (v5) at (3,4) {};
			\node [vertex, label=right:{$(\{a\},\{2\})$}] (v6) at (6,1) {};
			\node[vertex] (v7) at (3,3.5) {};
			\node (v8) at (2.2,2.7){};
			\node[vertex] (v9) at (4.5,2.1){};
			\node (v10) at (3.5, 1.5){};
			\node[vertex] (v11) at (6,0.5){};
			\node (v12) at (4.5, 0.5){};
			
			\draw (v1) -- (v2);
			\draw[dotted] (v1) -- (v3) -- (v2);
			\draw (v1) -- (v4);
			\draw (v2) -- (v6);
			\draw[dotted] (v3) -- (v5);
			\draw (v4) -- (v6);
			\draw (v4) -- (v5) -- (v6);
			\draw[->,thick] (v7) -- (v8)node[midway, above, fill=white, inner sep=1pt] {$p$};
			\draw[->,thick] (v9) -- (v10) node[midway, above, fill=white, inner sep=1pt] {$q$};
			\draw[->,thick] (v11) -- (v12) node[midway, above, fill=white, inner sep=1pt] {$r$};
			
		\end{tikzpicture}
		\caption{Illustration of $\overline{\V}_\sigma$ on $U_\sigma$, where $\sigma=\{a,b\}$, $\Lambda_\sigma=\{1,2,3\}$. The arrow ($p$) represents the pairing between the edge $(\{a,b\},\{3\})$ and the rectangle $(\{a,b\},\{1,3\})$, the arrow ($r$) represents the pairing between the edge $(\{a,b\},\{2\})$ and the rectangle $(\{a,b\},\{1,2\})$ while the arrow ($q$) represents the pairing between the rectangle $(\{a,b\},\{2,3\})$ and the prism $(\{a,b\},\{1,2,3\})$.}\label{gvf}
	\end{figure}
	
	Now, $\V:= \bigcup_{\sigma \in X} \overline{\V}_{\sigma}$. The well-definedness of $\V$ follows from the fact that $U_{\sigma}\cap U_{\tau} = \emptyset$ for $\sigma \ne \tau$, $\sigma, \tau \in X$. For each $\sigma \in X$, let $i_{\sigma}= \min\{i \mid i \in \Lambda_\sigma\}$. Therefore the simplotopes in $\widetilde{X}$ which are unpaired with respect to $\V$, are given by $\{(\sigma, \{i_{\sigma}\}) \mid \sigma \in X\}$.
	
	The next observation follows from the definition of $\Lambda_\sigma$ for $\sigma \in X$.
	\begin{obs}\label{V}
		Let $\sigma \subseteq \tau \in X$. Then $\Lambda_{\tau} \subseteq \Lambda_{\sigma}$. Consequently, $i_{\sigma} \leq i_{\tau}$.
	\end{obs}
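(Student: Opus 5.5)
The statement to prove is \autoref{V}: for $\sigma \subseteq \tau$ in $X$ we have $\Lambda_\tau \subseteq \Lambda_\sigma$, and hence $i_\sigma \le i_\tau$. The proof is a direct unwinding of the definitions and uses only the fact that each $A_i$ is a subcomplex, i.e.\ closed under taking subsets.

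For the first claim, take an arbitrary $i \in \Lambda_\tau$. By definition of $\Lambda_\tau$ this means $\tau \in A_i$. Since $A_i$ is a subcomplex of $X$ and $\sigma \subseteq \tau$, closure under faces gives $\sigma \in A_i$. Hence $i \in \Lambda_\sigma$, and since $i$ was arbitrary, $\Lambda_\tau \subseteq \Lambda_\sigma$.

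For the second claim, first note that both minima exist. Because $X = \bigcup_{i=1}^k A_i$, every simplex of $X$ lies in some $A_i$, so $\Lambda_\tau \neq \emptyset$, and therefore $\Lambda_\sigma \supseteq \Lambda_\tau$ is nonempty as well. Now $i_\tau = \min \Lambda_\tau$ is an element of $\Lambda_\tau$, hence of $\Lambda_\sigma$ by the first claim. Since $i_\sigma$ is the minimum of $\Lambda_\sigma$, we get $i_\sigma \le i_\tau$.

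There is no genuine obstacle here. The only points that need care are to use face-closedness of the $A_i$, rather than of $X$ alone, and to check that the sets $\Lambda_\sigma, \Lambda_\tau$ are nonempty so that their minima are defined; the covering hypothesis $X = \bigcup_i A_i$ supplies the latter.
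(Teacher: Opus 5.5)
Your proof is correct and is the same definitional argument the paper intends. The paper's only justification is that the observation follows from the definition of $\Lambda_\sigma$, and your use of face-closedness of each $A_i$ supplies exactly those details, together with $X=\bigcup_i A_i$ to guarantee that the minima exist.
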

	
	The following proposition characterises the $\mathcal{V}$-trajectories in $\widetilde{X}$.
	
	\begin{prop}\label{vtraj}
		Let $P: \tau_0', \sigma_1', \tau_1', \dots , \sigma_k', \tau_k', \sigma_{k+1}'$ be a $\V$-trajectory in $\widetilde{X}$, where $\sigma_j'=(\sigma_j, \alpha_j)$, $\tau_j'=(\tau_j, \beta_j)$, $\sigma_j, \tau_j \in X$, $\alpha_j, \beta_j \in \N$. Then $P$ is of the following form.
		\begin{enumerate}[label=(\roman*)]
				\item If $\tau_0=\sigma_1$ and $\ao <_f \beta_0$, then $i_{\tau_0} \notin \beta_0$.\label{cond1}
				\item For each $j \in [k]$, $\tau_j=\sigma_j$, $i_{\sigma_j} \notin \alpha_j$ and $\beta_j = \alpha_j \sqcup \{i_{\sigma_j}\}$. \label{cond2}
				\item For each $j \in [k-1]$, $\sigma_{j+1} <_f \tau_j$ and $\alpha_{j+1}= \beta_j$.\label{cond3}
				\item Either $\sigma_{k+1}=\tau_k$ or $\sigma_{k+1} <_f \tau_k$.\label{cond4}
			\end{enumerate}
		\end{prop}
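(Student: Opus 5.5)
The plan is to read off everything from the explicit form of $\V$. Every pair in $\V$ has the form $\left((\sigma,\alpha),(\sigma,\alpha\sqcup\{i_\sigma\})\right)$ with $i_\sigma\notin\alpha$ and $\alpha\sqcup\{i_\sigma\}\subseteq\Lambda_\sigma$. In particular, the simplicial coordinate never changes along a $\V$-pair. So the first step is to write down, for each consecutive pair of entries in $P$, which of the two defining relations holds: either $\V$-membership or the facet relation together with non-membership in $\V$. Then we translate each relation into conditions on the coordinates.

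First, condition \ref{cond2}. For each $j\in[k]$ we have $(\sigma_j',\tau_j')\in\V$. By the description of $\V$ as the union of the $\overline{\V}_\sigma$ over the disjoint sets $U_\sigma$, this forces $\tau_j=\sigma_j$, $i_{\sigma_j}\notin\alpha_j$ and $\beta_j=\alpha_j\sqcup\{i_{\sigma_j}\}$. This is immediate.

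Next, for each $j$ we use that $\sigma_{j+1}'$ is a facet of $\tau_j'$. A facet of a simplotope $(\tau_j,\beta_j)$ drops exactly one vertex in exactly one coordinate, so exactly one of two cases holds.
\begin{enumerate}[label=(\alph*)]
\item $\sigma_{j+1}<_f\tau_j$ and $\alpha_{j+1}=\beta_j$.
\item $\sigma_{j+1}=\tau_j$ and $\alpha_{j+1}<_f\beta_j$.
\end{enumerate}
Conditions \ref{cond1}, \ref{cond3} and \ref{cond4} amount to ruling out case (b) in certain positions, or constraining it there.

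For \ref{cond3}, take $j\in[k-1]$ and suppose case (b) held. Then $\sigma_{j+1}=\tau_j=\sigma_j$ by \ref{cond2}. Moreover $\sigma_{j+1}'$ is paired upward, so $i_{\sigma_j}\notin\alpha_{j+1}$, whereas $i_{\sigma_j}\in\beta_j$. Hence $\alpha_{j+1}=\beta_j\setminus\{i_{\sigma_j}\}=\alpha_j$, which gives $\sigma_{j+1}'=\sigma_j'$. Then $\sigma_{j+1}'$ is a facet of $\tau_j'$ with $(\sigma_{j+1}',\tau_j')=(\sigma_j',\tau_j')\in\V$, contradicting the trajectory condition $(\sigma_{j+1}',\tau_j')\notin\V$. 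So case (a) holds.

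For \ref{cond1}, suppose $\tau_0=\sigma_1$ and $\alpha_1<_f\beta_0$, and assume for contradiction that $i_{\tau_0}\in\beta_0$. If $\alpha_1=\beta_0\setminus\{i_{\tau_0}\}$, then $(\sigma_1',\tau_0')\in\V$, contradicting the trajectory condition. Otherwise $i_{\tau_0}\in\alpha_1$. If $k\ge1$ this already contradicts $i_{\sigma_1}\notin\alpha_1$ from \ref{cond2}. If $k=0$, I will need to check the intended reading of \ref{cond1}. Presumably it is meant for $k\ge1$, or the authors intend $\tau_0'$ to be critical, in which case $\beta_0=\{i_{\tau_0}\}$ and the situation $\alpha_1<_f\beta_0$ forces $\alpha_1=\emptyset$, which is not a vertex set of a simplotope. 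So either way the claim holds or the case is vacuous. Finally, \ref{cond4} is just the dichotomy (a)/(b) at $j=k$, with no constraint needed.

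The main obstacle is this boundary case in \ref{cond1}, the case $k=0$ with $\tau_0'$ not critical. I would resolve it by reading the hypothesis of \ref{cond1} as applying when $k\ge1$, and note this explicitly. Everything else is bookkeeping on the unique coordinate in which a facet differs.
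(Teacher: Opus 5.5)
Your proof is correct and follows the paper's argument. You read (ii) and (iv) off the construction of $\V$ and the facet dichotomy. For (i) and (iii) you rule out the ``same simplex, smaller index set'' case by playing the upward pairing of $\sigma_1'$ (respectively $\sigma_{j+1}'$) against the condition $(\sigma_{j+1}',\tau_j')\notin\V$; in (iii) you do this in the reverse order from the paper, but the ingredients are the same.

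Your caveat on (i) is warranted, and you should state it as a genuine restriction to $k\ge 1$ rather than an interpretive choice. For $k=0$ the claim is false: take $\Lambda_\sigma=\{1,2,3\}$, $\tau_0'=(\sigma,\{1,2,3\})$ and $\sigma_1'=(\sigma,\{1,2\})$; this is a valid $\V$-trajectory, yet $i_{\tau_0}=1\in\beta_0$. The paper's own proof of (i) also silently uses $k\ge1$, when it asserts that $\sigma_1'$ is paired with a higher-dimensional simplotope.
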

		\begin{proof}
			\begin{enumerate}[label=(\roman*)]
				\item Since, $\sigma_1'$ is a facet of $\tau_0'$, therefore either we have $\sigma_1 <_f \tau_0$, $\beta_0=\ao$ or we have  $\tau_0=\sigma_1$, $\ao <_f \beta_0$. In the case, $\tau_0=\sigma_1$ and $\ao <_f \beta_0$, we observe that if $i_{\tau_0} \in \beta_0$, then $\alpha_1=\beta_0 \setminus \{x\}$, $x \ne i_{\tau_0}$, which  follows from the fact that $(\tau_0', \sigma_1') \notin \V$. Thus, $i_{\tau_0}=i_{\sigma_1} \in \alpha_1$, which means $\sigma_1'$ is paired with a lower dimensional simplotope $(\sigma_1, \ao \setminus \{i_{\sigma_1}\})$. This is not possible as $\sigma_1'$ is paired with a higher dimensional simplotope. 
				
				\item Since $(\sigma_j', \tau_j') \in \V$ for each $j \in [k]$, therefore, assertion \ref{cond2} follows from the construction of $\V$. 
				
				\item  Now, for each $j \in [k-1]$, since $\sigma_{j+1}'$ is a facet of $\tau_j'$, therefore either we have $\sigma_{j+1}=\tau_j$, $\alpha
				_{j+1} <_f \beta_j$, or we have $\sigma_{j+1} <_f \tau_j$, $\alpha_{j+1}=\beta_j$. We first consider the case when $\sigma_{j+1}=\tau_j$ and $\alpha
				_{j+1} <_f \beta_j$. Now, $\beta_j=\alpha_j \sqcup \{i_{\sigma_j}\}$ (from \ref{cond2}). As a result, since, $\alpha_{j+1} \ne \alpha_j$, therefore $\alpha_{j+1}=\beta_j \setminus \{x\}$, $x \neq i_{\sigma_j}$. Since, $\sigma_j=\tau_j$ (also from \ref{cond2}), therefore, this means $i_{\sigma_{j+1}}=i_{\tau_j}=i_{\sigma_j} \in \alpha_{j+1}$. Consequently, from the construction of $\V$, $\sigma_j'$ is paired with a lower dimensional simplotope, $(\sigma_{j+1}, \alpha_{j+1} \setminus \{i_{\sigma_{j+1}}\})$, which is a contradiction. Therefore,  $\sigma_{j+1} <_f \tau_j$ and $\alpha_{j+1}=\beta_j$ for each $j \in [k-1]$. 
				\item  Assertion \ref{cond4} follows from the fact that $\sigma_{k+1}'$ is a a facet of $\tau_{k}'$.
			\end{enumerate}
		\end{proof}
		\begin{lem}
			$\V$ is a gradient vector field.
		\end{lem}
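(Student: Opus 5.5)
We need to show that there is no closed $\V$-trajectory. The plan is to use \autoref{vtraj} to control how the underlying simplex changes along a trajectory, and then to show that the nerve component alone cannot close up.

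Suppose, for contradiction, that $P: \tau_0', \sigma_1', \tau_1', \dots$ is a closed $\V$-trajectory with $\tau_0'=\tau_r'$ for some $r>1$. We may discard the initial segment and assume every $\tau_j'$ in the loop is the upper member of a $\V$-pair, so that \autoref{vtraj}\ref{cond2} and \ref{cond3} apply around the whole loop (extended periodically).

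By \ref{cond2}, $\tau_j=\sigma_j$ for each $j$. By \ref{cond3}, $\sigma_{j+1} <_f \tau_j$ for each $j$ in the interior of the trajectory. So the simplicial dimension of $\sigma_{j+1}$ is strictly smaller than that of $\sigma_j$ each time we pass from $\tau_j'$ to $\sigma_{j+1}'$. This forces $\dim(\tau_j)$ to strictly decrease along the loop. That is impossible, since $\tau_r=\tau_0$.

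The delicate point is the case distinction in \autoref{vtraj}: \ref{cond3} is stated only for $j\in[k-1]$, and the first step \ref{cond1} allows $\tau_0=\sigma_1$. I will handle this by applying the proposition to a longer trajectory. Traverse the loop twice, so that every step of the original loop becomes an interior step $j\in[k-1]$ of a longer $\V$-trajectory. Then \ref{cond3} applies to each of them. The reason \ref{cond3} holds for interior steps is that, in the alternative case $\sigma_{j+1}=\tau_j$, $\alpha_{j+1}<_f\beta_j$, the new simplotope would contain $i_{\sigma_{j+1}}$ and hence be paired downward. That contradicts the requirement that $\sigma_{j+1}'$ be paired upward with $\tau_{j+1}'$.

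With this in place, $\dim(\tau_{j+1}) = \dim(\tau_j)-1$ along the doubled loop. Comparing $\tau_0$ with $\tau_r$ gives $\dim(\tau_0)=\dim(\tau_0)-r$, a contradiction. Therefore $\V$ has no closed trajectories and is a gradient vector field.

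I expect the only real care needed to be in justifying that the interior-step conclusion applies uniformly around the loop. The doubling trick above, or alternatively re-running the argument of \ref{cond3} directly, should settle it.
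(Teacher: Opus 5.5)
Your proof is correct and takes essentially the same approach as the paper: both use \autoref{vtraj}\ref{cond2} and \ref{cond3} to show that the underlying simplex drops one dimension at every interior step, so $\tau_r'$ can never coincide with $\tau_0'$. The doubling trick is valid but unnecessary. The paper uses only $\dim(\tau_0)\ge\dim(\sigma_1)$ at the first step and strict decrease from $\sigma_1$ onward, which already suffices because $r>1$.
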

		
		\begin{proof}
			Let $P: \tau_0', \sigma_1', \tau_1', \dots , \sigma_k', \tau_k', \sigma_{k+1}'$ be a $\V$-trajectory.
			It follows from \autoref{vtraj}\ref{cond3} that,
			$$ \dim(\tau_0) \geq \dim(\sigma_1) > \dim(\sigma_2) > \dots > \dim(\sigma_r), $$
			for any $r > 1$. Again from \autoref{vtraj}\ref{cond2}, we have $\dim(\sigma_r)=\dim(\tau_r)$ for any $r >1$.
			Therefore $\tau_0 \neq \tau_r$ for each $r > 1$. Consequently, $\tau_0' \ne \tau_r'$, for each $r>1$. Thus, $P$ is not closed.
		\end{proof}
		\noindent Therefore, for each $q \geq 0$, the set of all $q$-dimensional $\V$-critical simplotopes in $\wX$ is given by,
		$$ \CrV= \{(\sigma, \{i_{\sigma}\}) \mid \sigma \in S_q(X)\},$$
		where $S_q(X)$ is the set of all oriented $q$-simplices in $X$, as defined in Section~\ref{prelim}.
		
		In the next subsection, we show that the simplotopal Thom--Smale complex of $\widetilde{X}$ with respect to $\mathcal{V}$ is isomorphic to the simplicial chain complex of $X$.
		
		\subsection{Isomorphism between the simplotopal homology of $\wX$ and the simplicial homology of $X$}
		\begin{prop}\label{iso1}
			The following isomorphism holds.
			$$ (C_{\#}(X), \partial_{\#}) \cong (C_{\#}^{\V}(\wX), \partial^{\V}_{\#}).$$
			Hence, $H_{\#}(X) \cong H^{\V}_{\#}(\wX)$.
		\end{prop}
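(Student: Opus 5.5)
The plan is to define the obvious bijection of generators and check that it intertwines the boundaries. Define $f_q: C_q(X) \to C^{\V}_q(\wX)$ on each oriented simplex by $f_q(\sigma) := (\sigma, \{i_\sigma\})$. Here the simplotope inherits the orientation of $\sigma$, since the second coordinate $\{i_\sigma\}$ is a single vertex and contributes nothing to the ordering. By the description of $\CrV$ obtained after the lemma that $\V$ is a gradient vector field, $f_q$ is a bijection on bases, and hence an isomorphism of free modules. It then suffices to prove $\partial^{\V}_{q+1}(f_{q+1}(\tau)) = f_q(\partial_{q+1}\tau)$ for every $\tau \in S_{q+1}(X)$. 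The homology statement then follows at once, because isomorphic chain complexes have isomorphic homology.

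To compute $\partial^{\V}_{q+1}((\tau,\{i_\tau\}))$, I will enumerate all $\V$-trajectories starting at $\tau_0' = (\tau,\{i_\tau\})$ and ending at some critical $(\sigma,\{i_\sigma\})$, using \autoref{vtraj}. The first step $\sigma_1'$ is a facet of $\tau_0'$. Since $\beta_0 = \{i_\tau\}$ has no facets in the nerve, we must have $\sigma_1 <_f \tau$ and $\alpha_1 = \{i_\tau\}$. There are now two cases, according to \autoref{V}.

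\emph{Case $i_{\sigma_1} = i_\tau$.} Then $\sigma_1'$ is critical, and the trajectory has length zero ($k=0$). Its weight is $\langle \tau_0', \sigma_1'\rangle = \langle \tau, \sigma_1\rangle$ by Observation~\ref{inc}(1).

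\emph{Case $i_{\sigma_1} < i_\tau$.} Then $\sigma_1'$ is paired upward with $\tau_1' = (\sigma_1, \{i_{\sigma_1}, i_\tau\})$. The next facet $\sigma_2'$ either stays at $\sigma_1$ or drops dimension in the $X$-coordinate.
\begin{itemize}
\item If it drops dimension, then by \autoref{vtraj}\ref{cond3} the next $\alpha$ is the two-element set $\{i_{\sigma_1}, i_\tau\}$. Two-element nerve coordinates can never be critical, and further pairing only increases the nerve coordinate. So the $X$-dimension keeps dropping, and since dimension is bounded below, no such trajectory reaches a critical simplotope of dimension $q$. I will make this precise by noting that every critical target has nerve coordinate a singleton, and tracking $\dim(\sigma_j) + \dim(\alpha_j) = q$.
\item If it stays at $\sigma_1$, then $\alpha_2 = \{i_{\sigma_1}\}$ is the only option that keeps it unpaired upward. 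The choice $\alpha_2 = \{i_\tau\}$ just returns along the pair, which is excluded. So the trajectory is forced to end at $(\sigma_1,\{i_{\sigma_1}\})$.
\end{itemize}
Its weight is $(-1)\langle\tau_0',\sigma_1'\rangle\langle\tau_1',\sigma_1'\rangle\langle\tau_1',\sigma_2'\rangle$. By Observation~\ref{inc}(2), applied to simplotopes $(\sigma_1, \cdot)$, the last two factors are $(-1)^{\dim\sigma_1}\langle\{i_{\sigma_1},i_\tau\},\{i_\tau\}\rangle$ and $(-1)^{\dim\sigma_1}\langle\{i_{\sigma_1},i_\tau\},\{i_{\sigma_1}\}\rangle$. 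These are nerve incidences of opposite sign (order the pair as $i_{\sigma_1}<i_\tau$), so the signs cancel to $-1$, and the total weight is again $\langle\tau,\sigma_1\rangle$.

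Thus each facet $\sigma$ of $\tau$ contributes exactly one trajectory, of weight $\langle\tau,\sigma\rangle$, to the coefficient of $(\sigma,\{i_\sigma\})$. This gives $\partial^{\V} f = f\partial$. The main obstacle is the exhaustive argument in the second case that no other trajectories survive: specifically, ruling out the branch that leaves the singleton-nerve level, and pinning down the sign conventions of Observation~\ref{inc} for the product $\sigma\times\alpha$. I expect that dimension counting suffices for the first of these, since a trajectory of dimension $q$ ending at $(\sigma,\{i_\sigma\})$ requires $\dim\sigma = q$, which forces the length-$\le 1$ shapes above.
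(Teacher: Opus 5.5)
Your proposal is correct and follows the paper's proof essentially step for step. The paper uses the inverse map $h_q:(\sigma,\{i_\sigma\})\mapsto\sigma$ and splits into the same two cases, $i_{\sigma'}=i_\tau$ and $i_{\sigma'}<i_\tau$; it rules out the dimension-dropping branch via assertions (iii) and (iv) of \autoref{vtraj}, just as your dimension count does, and arrives at the same weight $-\langle\tau,\sigma'\rangle(-1)^{\dim\sigma'}(-1)^{\dim\sigma'+1}=\langle\tau,\sigma'\rangle$.
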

		
		\begin{proof}
			First we construct an isomorphism $h_{\#}:C_{\#}^{\V}(\wX) \rightarrow C_{\#}(X) $ and then we show that the following diagram commutes, that is, $h_{\#}$ is a chain map.
			
			\[\begin{tikzcd}
				\cdots & {C_{q+1}(X)} && {C_{q}(X)} & \cdots \\
				\\
				\cdots & {C_{q+1}^{\V}(\wX)} && {C_{q}^\V(\wX)} & \cdots
				\arrow[from=1-1, to=1-2]
				\arrow["{\partial_{q+1}}", from=1-2, to=1-4]
				\arrow["{h_{q+1}}"', from=3-2, to=1-2]
				\arrow[from=1-4, to=1-5]
				\arrow["{h_{q}}"', from=3-4, to=1-4]
				\arrow[from=3-1, to=3-2]
				\arrow["{\partial^\V_{{q+1}}}"', from=3-2, to=3-4]
				\arrow[from=3-4, to=3-5]
				\arrow[draw=none, from=1-2, to=3-4, "{\text{\Huge$\circlearrowleft$}}" description]
			\end{tikzcd}\]
			For each $q \ge 0$, let us define $h_q: C_{q}^{\V}(\wX) \rightarrow C_{q}(X)$. It suffices to define $h_q$ on the set of generators. Let $(\sigma, \{i_{\sigma}\}) \in \CrV$. Then we define, $h_q((\sigma, \{i_{\sigma}\})):= \sigma$. It follows from the definition that $h_q$ is an isomorphism for each $q \geq 0$.
			
			Next, we show that $h_{q} \circ \partial^\V_{{q+1}}= \partial_{{q+1}} \circ h_{q+1}$. We note that it is sufficient to prove that this holds on $\Crit_{q+1}^{\V}(\wX)$.
			
			Let $\tau'=(\tau, \{i_{\tau}\}) \in \Crit_{q+1}^{\V}(\wX)$. Then,
			\begin{equation*}
				\begin{aligned}
					\partial_{{q+1}}( h_{q+1}(\tau')) &= \partial_{q+1}(\tau) \\
					&= \sum_{\substack{\sigma: \sigma <_f \tau}} \langle \tau, \sigma\rangle \sigma.
				\end{aligned}
			\end{equation*}
			Now we calculate $\partial^\V_{{q+1}}(\tau')$.
			\begin{equation*}
				\begin{aligned}
					\partial^\V_{q+1}(\tau')= \sum_{(\sigma, \{i_{\sigma}\}) \in \CrV}\left(\sum_{P \in \Gamma(\tau', (\sigma, \{i_{\sigma}\}))}w(P)\right) (\sigma,\{ i_{\sigma}\}).
				\end{aligned}
			\end{equation*}
			Next, we show that there are precisely two kinds of trajectories in $\Gamma(\tau', (\sigma, \{i_{\sigma}\}))$, depending on the facets of $\tau'$. Let $(\sigma', \{i_{\tau}\})$ be a facet of $\tau'$, that is, $\sigma'<_f \tau$ (we note that this is the only kind of facet $\tau'$ can have). For each facet $(\sigma', \{i_{\tau}\})$ of $\tau'$, we now determine the type of trajectories of the form $(\tau'=)~(\tau, \{i_\tau\}), (\sigma', \{i_\tau\}), \dots, (\sigma, \{i_{\sigma}\})$, where $(\sigma, \{i_{\sigma}\})\in \CrV$. 
			
			From \autoref{V}, we have the following two cases.
			
			\textbf{Case I:} Let $i_{\sigma'}= i_{\tau}$.
			
			In this case, $(\sigma',\{i_{\tau}\}) \in \CrV$ from the construction of $\V$ and hence, we conclude that the only possible trajectory of the form $(\tau, \{i_\tau\}), (\sigma', \{i_\tau\}), \dots , (\sigma, \{i_{\sigma}\})$, where $(\sigma, \{i_{\sigma}\}) \in \CrV$ is $P: (\tau, \{i_{\tau}\}), (\sigma',\{i_{\tau}\})$. Thus, here the terminal (critical) simplotope of $P$ is of the form $(\sigma, \{i_{\sigma}\})$, where $\sigma <_f \tau$ and $i_\sigma = i_\tau$. Now, $$w(P)= \langle (\tau, \{i_{\tau}\}), (\sigma', \{i_{\tau}\}) \rangle= \langle \tau, \sigma' \rangle,$$ which follows from \autoref{inc} (Section~\ref{prelim}).
			
			\textbf{Case II:} Let $i_{\sigma'} < i_{\tau}$.
			
			Let $P$ be a trajectory of the form, $$P:((\tau, \{i_{\tau}\})=)\tau_0, ((\sigma', \{i_{\tau}\})= )\sigma_1, \tau_1, \sigma_2, \tau_2,  \dots , (\sigma, \{i_{\sigma}\}),$$ where $(\sigma, \{i_{\sigma}\}) \in \CrV$. We now show that for a fixed facet $(\sigma', \{i_{\tau}\})$ of $(\tau, \{i_{\tau}\})$, there exists a unique trajectory of this form. Since $i_{\sigma'}< i_\tau$, therefore from the construction of $\V$, it follows that $\tau_1=(\sigma', \{i_{\sigma'}, i_{\tau}\})$. Now, since $\sigma_2$ is a facet of $\tau_1$, therefore  either $\sigma_2=(\sigma'',\{i_{\sigma'}, i_{\tau}\})$, where $\sigma'' <_f \sigma'$ or $\sigma_2=(\sigma', \{i_{\sigma'}\})$. Suppose, $\sigma_2=(\sigma'',\{i_{\sigma'}, i_{\tau}\})$, where $\sigma'' <_f \sigma'$. In this case, we infer from \autoref{vtraj} (assertions \ref{cond3} and \ref{cond4}) that $\dim(\sigma) \le \dim(\sigma'')$. Therefore, it follows that, $$ \dim(\sigma) \leq \dim(\sigma'') < \dim(\sigma') < \dim(\tau),$$
			since $\sigma''<_f \sigma' <_f \tau$. This implies, $\dim(\sigma)< (\dim(\tau)-1)= q$, which is not possible (since $(\sigma, \{i_{\sigma}\}) \in \CrW$). So, $\sigma_2$ must be $(\sigma', \{i_{\sigma'}\})$ and hence critical. It follows that such a trajectory $P$ is unique and moreover, the terminal (critical) simplotope of $P$ is of the form $(\sigma, \{i_{\sigma}\})$, where $\sigma <_f \tau$ and $i_\sigma < i_\tau$.
			Now,
			\begin{equation*}
				\begin{aligned}
					w(P)&= -\langle \tau_0, \sigma_1\rangle \langle \tau_1, \sigma_1\rangle \langle \tau_1, \sigma_2 \rangle\\ &= -\langle (\tau, \{i_{\tau}\}), (\sigma', \{i_{\tau}\})  \rangle \langle (\sigma', \{i_{\sigma'}, i_{\tau}\}), (\sigma', \{i_{\tau}\})\rangle\langle (\sigma', \{i_{\sigma'}, i_{\tau}\}), (\sigma', \{i_{\sigma'}\})\rangle \\
					&=- \langle \tau, \sigma'\rangle(-1)^{\dim(\sigma') +0}  (-1)^{\dim(\sigma') +1} \\ &= \langle \tau, \sigma'\rangle.
				\end{aligned}
			\end{equation*}
			Therefore,
			\begin{equation*}
				\begin{aligned}
					\partial^\V_{q+1}((\tau, \{i_{\tau}\}))&= \sum_{\substack{(\sigma, \{i_{\sigma}\}) \in \CrV\\ \sigma <_f \tau \text{ and }\\i_{\sigma}=i_{\tau}}}\langle \tau, \sigma\rangle (\sigma, \{i_{\sigma}\}) + \sum_{\substack{(\sigma, \{i_{\sigma}\})\in \CrV \\ \sigma <_f \tau \text{ and }\\  i_{\sigma}<i_{\tau}}}\langle \tau, \sigma\rangle (\sigma, \{i_{\sigma}\}) \\
					&= \sum_{\substack{(\sigma, \{i_{\sigma}\}): \sigma <_f \tau}}\langle \tau, \sigma\rangle (\sigma, \{i_{\sigma}\})\\
					\implies h_q(\partial^\V_{q+1}((\tau, \{i_{\tau}\})))&= \sum_{\substack{(\sigma, \{i_{\sigma}\}): \sigma <_f \tau}}\langle \tau, \sigma\rangle h_q((\sigma, \{i_{\sigma}\}))\\
					&=	\sum_{\substack{\sigma: \sigma <_f \tau}}\langle \tau, \sigma\rangle\sigma\\
					&= \partial_{{q+1}}(h_{q+1}((\tau, \{i_{\tau}\}))).
				\end{aligned}
			\end{equation*}
			This proves that $h_{\#}$ is a chain isomorphism and hence the result follows.
		\end{proof}
		
		\noindent We obtain the following proposition as a direct consequence of \autoref{forman} and the above result (\autoref{iso1}).
		
		\begin{prop}\label{first}
			The simplicial homology of $X$ is isomorphic to the simplotopal homology of $\wX$, that is,
			$$ H_{\#}(X) \cong H_{\#}(\wX).$$
		\end{prop}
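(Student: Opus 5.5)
This is a direct chaining of two results already established. The plan is to compose the isomorphism of \autoref{iso1} with the one supplied by \autoref{forman}.

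First, apply \autoref{forman} to the simplotopal complex $\wX$ with the gradient vector field $\V$ constructed above. This gives, for each $q \ge 0$,
$$H^{\V}_q(\wX) \cong H_q(\wX).$$
Two hypotheses must hold for this step.
\begin{enumerate}
\item $\wX$ is a simplotopal complex. This was noted in Subsection~\ref{constr}: each element $(\sigma,\alpha)$ is a pair of nonempty sets drawn from the disjoint vertex sets $V(X)$ and $[k]$, and the collection is closed under nonempty subsets in each coordinate. For the second coordinate this holds because $A_{\alpha'} \supseteq A_\alpha$ whenever $\alpha' \subseteq \alpha$, and for the first because $A_\alpha$ is a subcomplex.
\item $\V$ is a gradient vector field. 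This is exactly the lemma proved just before, which rests on \autoref{vtraj}.
\end{enumerate}

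Second, \autoref{iso1} gives a chain isomorphism
$$(C_{\#}(X),\partial_{\#}) \cong (C^{\V}_{\#}(\wX),\partial^{\V}_{\#}),$$
hence $H_q(X) \cong H^{\V}_q(\wX)$ for every $q$. Composing the two isomorphisms yields $H_q(X) \cong H^{\V}_q(\wX) \cong H_q(\wX)$ for all $q$, which is the claim.

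There is essentially no obstacle. The only point worth checking is that \autoref{forman}, which is derived from Kozlov's \autoref{koz}, genuinely applies. That requires every pair in $\V$ to have incidence $\pm 1$ in the simplotopal boundary, which holds by the definition of the simplotopal incidence number, and requires acyclicity of $\V$ in the sense of the poset of \autoref{alg}. Acyclicity in that sense coincides with the absence of closed $\V$-trajectories, because $\langle \tau,\sigma\rangle \neq 0$ exactly when $\sigma$ is a facet of $\tau$.
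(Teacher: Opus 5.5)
Your proposal is correct and is exactly the paper's argument: the paper states this proposition as a direct consequence of \autoref{forman} and \autoref{iso1}, composing $H_{\#}(X)\cong H^{\V}_{\#}(\wX)\cong H_{\#}(\wX)$. Your extra checks are valid and only make explicit what the paper leaves implicit, namely that $\wX$ is a simplotopal complex, that $\V$ is a gradient vector field, and that \autoref{koz} applies to $\V$.
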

		
		In the next subsection, we construct a new gradient vector field $\W$ on $\wX$ and show that the Thom--Smale chain complex of $\wX$ with respect to $\W$ is isomorphic to the chain complex $\Lc$. This will establish an isomorphism between $H_{\#}(\wX)$ and $H^{\Lq}_{\#}(X)$.
		
		\subsection{Construction of $\mathcal{W}$ on $\wX$}
		We recall that $\Wo_\alpha$ is a given gradient vector field on $\Aa$ for each $\alpha \in \N$. We also recall that we defined $\widetilde{A}_{\alpha}:= \{(\sigma, \alpha) \mid \sigma \in A_{\alpha}\}$, for each $\alpha \in \N$ in Subsection~\ref{constr}. Now, we define a collection of pairs on each $\wA$ as $$\Va := \bigcup_{q=1}^{\dim(A_{\alpha}) - 1}\{((\sigma, \alpha),(\tau, \alpha)) \mid (\sigma^{(q-1)}_\alpha, \tau^{(q)}_\alpha) \in \oW \}.$$ Next, we define a discrete vector field on $\widetilde{X}$ as, $\W:= \bigcup\limits_{\alpha \in \N}\Va$. The well-definedness of $\W$ follows from the fact that $\wA \cap \widetilde{A}_{\beta} = \emptyset$ for $\alpha \ne \beta$, $\alpha$, $\beta \in \N$.
		
		Suppose, $\alpha=\{1,2\}$ and $A_\alpha=\{\{a\}, \{b\}, \{c\}, \{a,b\}, \{b,c\}, \{a,b,c\}\}$. 
		
		Let $\oW= \{(\{b\}_{\{1,2\}}, \{a,b\}_{\{1,2\}}), (\{c\}_{\{1,2\}}, \{a,c\}_{\{1,2\}}), (\{b,c\}_{\{1,2\}}, \{a,b,c\}_{\{1,2\}})\}$. Then \autoref{gvf2} illustrates $\Va$ on $\widetilde{A}_{\alpha}$.	
		\begin{figure}[h]
			\centering
			\begin{tikzpicture}[
				vertex/.style={draw, circle, fill=black, inner sep=0.05pt, minimum size=0.7mm},
				baseline]
				\fill[gray!60] (6,0) -- (3,3) -- (3,4) -- (6,1) -- cycle;
				\fill[gray!30] (3,3) -- (3,4) -- (0,1) -- (0,0) -- cycle;
				\fill[gray!10] (0,0) -- (0,1) -- (6,1) -- (6,0)-- cycle;
				\draw[pattern=dots] (0,1) -- (3,4) -- (6,1) -- cycle;
				\draw[pattern=dots] (0,1) -- (0,0) -- (6,0) -- (6,1) -- cycle;
				\node[vertex, label=left:{$(\{a\},\{1\})$}] (v1) at (0,0) {};
				\node [vertex, label=right:{$(\{b\},\{1\})$}] (v2) at (6,0) {};
				\node [vertex, label={[fill=white, inner sep=1pt,yshift=-2pt]below :{\small$(\{c\},\{1\})$}}] (v3) at (3,3) {};
				\node [vertex, label=left:{$(\{a\},\{2\})$}] (v4) at (0,1) {};
				\node [vertex, label=above:{$(\{c\},\{2\})$}] (v5) at (3,4) {};
				\node [vertex, label=right:{$(\{b\},\{2\})$}] (v6) at (6,1) {};
				\node[vertex] (v7) at (3,3.5) {};
				\node (v8) at (2.2,2.7){};
				\node[vertex] (v9) at (4.5,2.1){};
				\node (v10) at (3.5, 1.5){};
				\node[vertex] (v11) at (6,0.5){};
				\node (v12) at (4.5, 0.5){};
				
				\draw (v1) -- (v2);
				\draw[dotted] (v1) -- (v3) -- (v2);
				\draw (v1) -- (v4);
				\draw (v2) -- (v6);
				\draw[dotted] (v3) -- (v5);
				\draw (v4) -- (v6);
				\draw (v4) -- (v5) -- (v6);
				\draw[->,thick] (v7) -- (v8)node[midway, above, fill=white, inner sep=1pt]{$p$};
				\draw[->,thick] (v9) -- (v10)node[midway, above, fill=white, inner sep=1pt]{$q$};
				\draw[->,thick] (v11) -- (v12)node[midway, above, fill=white, inner sep=1pt]{$r$};
				
			\end{tikzpicture}
			\caption{Illustration of $\Va$ on $\widetilde{A}_\alpha$, where $\alpha=\{1,2\}$. The arrow ($p$) represent the pairing between the edge  $(\{c\},\{1,2\})$ and the rectangle $(\{a,c\},\{1,2\})$, the arrow ($q$) represents the pairing between the rectangle $(\{b,c\},\{1,2\})$ and the prism $(\{a,b,c\},\{1,2\})$ while the arrow($r$) represents the pairing between the edge $(\{b\},\{1,2\})$ and the rectangle $(\{a,b\},\{1,2\})$. }\label{gvf2}
		\end{figure}
		\subsubsection{Types of $\W$ trajectories in $\wX$}
			\begin{prop}\label{traj}
				Let
				$$ P: \tau_0', \sigma_1', \tau_1', \dots , \sigma_k', \tau_k', \sigma_{k+1}'$$ be a $\W$-trajectory, where $\tau_i'=(\tau_i, \beta_i)$, for each $i \in \{0, \dots ,k\}$ , $\sigma_i'=(\sigma_i, \alpha_i)$ for each $i \in [k+1]$. Then, $P$ is either of the following types.
				
				\begin{enumerate}[label=(\alph*)]
					\item $(\sigma_j', \tau_j') \in \V_{\beta_0}$ for each $j \in [k]$, $\tau_0', \sigma_{k+1}' \in \widetilde{A}_{\beta_0}$. This trajectory is thus of the following form,
					
					$$ (\tau_0^{(p)}, \beta_0), (\sigma_1^{(p-1)}, \beta_0), (\tau_1^{(p)}, \beta_0), \dots, (\sigma^{(p-1)}_{k+1}, \beta_0),$$
					where $((\sigma_j)_{\beta_0},(\tau_j)_{\beta_0}) \in \overline{\W}_{\beta_0}$ for each $j \in [k]$, $((\sigma_j)_{\beta_0},(\tau_{j-1})_{\beta_0}) \notin \overline{\W}_{\beta_0}$, $(\sigma_j)_{\beta_0} <_f(\tau_{j-1})_{\beta_0}$	for each $j \in [k+1]$.	
					In other words, this trajectory is entirely in $\widetilde{A}_{\beta_0}$.
					
					We call this type of trajectory a pure trajectory in $\widetilde{A}_{\bz}$.	\label{type1}	
					
					\item There exists an increasing sequence $0 < i_1 < i_2 < \dots < i_t \leq k+1$, such that
					
					\begin{equation*}
						\begin{aligned}
							& \tau_j' \in \widetilde{A}_{\beta_0} \text{for each } 0 \leq j < i_1,\\
							&\sigma_j' \in \widetilde{A}_{\alpha_{1}} \text{ i.e., } (\sigma_j', \tau_j') \in \V_{\alpha_{1}}  \text{ for each } i_1 \leq j < i_2,\\
							&\vdots\\
							& \sigma_j' \in \widetilde{A}_{\alpha_{t}} \text{ for each } i_t\leq j \leq k+1,\\
						\end{aligned}
					\end{equation*}
					where $\alpha_t <_f \alpha_{t-1} <_f \dots <_f \alpha_1 <_f \beta_0$ and $1 \le t \le \dim(\beta_0)$. This trajectory is thus of the form,
					$$ \begin{aligned}
						&\underbrace{(\tau_0^{(p)}, \beta_0), (\sigma_1^{(p-1)}, \beta_0), \dots , (\tau_{i_1-1}^{(p)}, \beta_0)}_{\widetilde{A}_{\beta_0}}, \underbrace{(\sigma_{i_1}^{(p)}, \alpha_{1}), (\tau_{i_1}^{(p+1)}, \alpha_{1}), \dots , (\tau_{i_2-1}^{(p+1)}, \alpha_{1})}_{\widetilde{A}_{\alpha_{1}}}, \dots,\\ & \underbrace{(\sigma_{i_t}^{(p+t-1)}, \alpha_{t}), (\tau_{i_t}^{(p+t)}, \alpha_{t}), \dots (\sigma_{k+1}^{(p+t-1)}, \alpha_{t})}_{\widetilde{A}_{\alpha_{t}}}.
					\end{aligned}$$
					Intuitively speaking, this trajectory travels through $\widetilde{A}_{\beta_0}$ for some time following $\V_{\bz}$ and then enters $\widetilde{A}_{\alpha_{1}}$, travels through $\widetilde{A}_{\ao}$ for some time following $\V_{\ao}$, before entering into $\widetilde{A}_{\alpha_{2}}$ and so on. More precisely, the following conditions hold.
					\begin{enumerate}[label=(\roman*)]
						\item $(\sigma_j)_{\beta_0} <_f (\tau_{j-1})_{\beta_0}$ for each $j \in [i_1-1]$,
						\item  $(\sigma_j)_{\alpha_r}<_f(\tau_{j-1})_{\alpha_r})$ for each $i_r+1 \le j < i_{r+1}$ where $r \in [t-1]$,
						\item $(\sigma_j)_{\alpha_t} <_f (\tau_{j-1})_{\alpha_t})$ for each $i_t+1 \le j \le (k+1)$,   
						\item $((\sigma_j)_{\beta_0}, (\tau_j)_{\beta_0}) \in \overline{\W}_{\beta_0}$, for each $j \in [i_1-1]$, and $((\sigma_j)_{\beta_0}, (\tau_{j-1})_{\beta_0})\notin \Wo_{\bz}$ for each $j \in [i_1-1]$.  
						\item $((\sigma_j)_{\alpha_{r}}, (\tau_j)_{\alpha_{r}}) \in \overline{\W}_{\alpha_{i_r}}$ for each $i_r \le j < i_{r+1}$, and $((\sigma_j)_{\alpha_r}, (\tau_{j-1})_{\alpha_r})\notin \Wo_{\alpha_r}$ for each $i_r+1 \le j < i_{r+1}$, where $r \in [t-1]$.   
						\item $((\sigma_j)_{\alpha_{t}}, (\tau_j)_{\alpha_{t}}) \in \overline{\W}_{\alpha_{t}}$ for each $i_t \le j < (k+1)$ and $((\sigma_j)_{\alpha_t}, (\tau_{j-1})_{\alpha_t})\notin \Wo_{\alpha_t}$ for each $i_t+1 \le j \le k+1$,
						\item $\sigma_{i_r}= \tau_{i_r-1}$ for each $r \in [t]$, and
						\item $\alpha_t <_f \alpha_{t-1} <_f \dots <_f \alpha_1 <_f \beta_0$, where $1 \le t \le \dim(\beta_0)$.
					\end{enumerate}
					Further, $\dim(\sigma_{k+1})=\dim(\tau_0) +t-1$.
					
					We call this type of trajectory a mixed trajectory.\label{type2}
				\end{enumerate}	
			\end{prop}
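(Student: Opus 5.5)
The plan is to read off the shape of $P$ from two structural facts, and then do dimension bookkeeping. \emph{First}, every pair in $\W$ has the form $((\sigma,\alpha),(\tau,\alpha))$ with $(\sigma_\alpha,\tau_\alpha)\in\oW$. So a paired step $\sigma_j'\rightarrowtail\tau_j'$ never changes the nerve coordinate: $\alpha_j=\beta_j$, and $\sigma_j<_f\tau_j$ in $A_{\alpha_j}$. \emph{Second}, by the description of facets of a simplotope (as in the proof of \autoref{vtraj}), a facet $\sigma_j'$ of $\tau_{j-1}'=(\tau_{j-1},\beta_{j-1})$ has one of two forms:
\begin{itemize}
\item $(\sigma_j,\beta_{j-1})$ with $\sigma_j<_f\tau_{j-1}$, a ``horizontal'' step;
\item $(\tau_{j-1},\alpha_j)$ with $\alpha_j<_f\beta_{j-1}$, a ``drop'' step.
\end{itemize}
Combining the two, the nerve coordinates along $P$ form a weakly decreasing chain $\beta_0\supseteq\alpha_1=\beta_1\supseteq\alpha_2=\beta_2\supseteq\cdots\supseteq\alpha_{k+1}$. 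Each strict inclusion is a facet relation and occurs exactly at a drop step.

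Let $i_1<\dots<i_t$ be the indices $j\in[k+1]$ at which a drop step $\tau'_{j-1}\to\sigma'_j$ occurs, and let $\alpha_1,\dots,\alpha_t$ be the successive nerve coordinates. This gives $\alpha_t<_f\cdots<_f\alpha_1<_f\beta_0$. If $t=0$, all coordinates equal $\beta_0$ and every facet step is horizontal, which is exactly the pure trajectory of type \ref{type1}. The conditions there are the trajectory axioms transported through the identification $(\sigma,\beta_0)\leftrightarrow\sigma_{\beta_0}$ and the definition of $\V_{\beta_0}$. If $t\ge1$, the blocks between consecutive drops are segments lying in a single $\widetilde{A}_{\alpha_r}$, and we verify the conditions of type \ref{type2} as follows.
\begin{itemize}
\item Conditions (i)--(iii) are the horizontal facet relations.
\item Conditions (iv)--(vi) are the pairings, together with the trajectory requirement $(\sigma_j',\tau_{j-1}')\notin\W$, rewritten via $\oW$. (A drop step is automatically not a $\W$-pair, because the nerve coordinates differ.)
\item Condition (vii), $\sigma_{i_r}=\tau_{i_r-1}$, is the definition of a drop step.
\end{itemize}

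For the dimension statements, write $p$ for the simplotope dimension of $\tau'_0$. Every $\tau'_j$ has simplotope dimension $p$ and every $\sigma'_j$ has $p-1$, and $\dim(\sigma,\alpha)=\dim\sigma+\dim\alpha$. Hence inside block $\alpha_r$ (with $\alpha_0=\beta_0$) the simplices $\tau_j$ have dimension $\dim\tau_0+r$ and the simplices $\sigma_j$ have dimension $\dim\tau_0+r-1$. This matches the displayed superscripts, and the last element gives $\dim(\sigma_{k+1})=\dim(\tau_0)+t-1$. Since each drop lowers $\dim\alpha$ by one and $\alpha_t\neq\emptyset$ must remain a nerve simplex, we get $t\le\dim(\beta_0)$. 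This also shows that the enumeration with $i_t\le k+1$ allows the final step itself to be a drop, in which case the last block consists of the single element $\sigma'_{k+1}$.

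The only real obstacle is bookkeeping: checking carefully that a drop step can only be followed by a $\W$-pair in the new block $\widetilde{A}_{\alpha_r}$ (or end the trajectory), rather than by some other move. This holds because $\sigma'_{i_r}$ must be paired upward by $\W$, and $\W$ only pairs within a single $\widetilde{A}_{\alpha}$. With that in hand, the rest follows directly from the two structural facts above.
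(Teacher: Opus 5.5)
Your proposal is correct and follows essentially the same route as the paper's proof. Both arguments use that $\W$-pairs never change the nerve coordinate, and that each facet step either shrinks the simplex or drops the nerve simplex to a facet (which forces $\sigma_{i_r}=\tau_{i_r-1}$), and then proceed block by block. Your explicit chain of nerve coordinates, the dimension count, and the bound $t\le\dim(\beta_0)$ make precise what the paper asserts with ``consequently''; you also correctly allow $i_1=1$, where the paper's proof writes $i_1>1$.
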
	
			\begin{proof}
				If each simplex of $P$ is in $\widetilde{A}_{\beta_0}$, then from the construction of $\W$, we must have $\sigma_j'=(\sigma_j^{(p-1)}, \beta_0)$ and $\tau_j'=(\tau_j^{(p)}, \beta_0)$, where $((\sigma_j)_{\bz}, (\tau_j)_{\bz}) \in \overline{\W}_{\bz}$ for each $j \in [k]$. Also, since $\sigma_{j}'$ is a facet of $\tau_{j-1}'$ for each $j \in [k+1]$, therefore, $\sigma_j'=(\sigma_j, \beta_0)$ and $\tau_{j-1}'=(\tau_{j-1}, \beta_0)$, where $(\sigma_{j})_{\bz} <_f (\tau_{j-1})_{\bz}$ for each $j \in [k+1]$. Again, from the construction of $\W$, since $(\sigma_{j}',\tau_{j-1}')\notin \W$ for each $j \in [k+1]$, therefore $((\sigma_{j})_{\bz},(\tau_{j-1})_{\bz}) \notin \overline{\W}_{\bz}$, for each $j \in [k+1]$. Hence, in this case, $P$ is of the first type, more precisely, of the form stated in \ref{type1}. 
				
				Otherwise, there exists $i_1 >1$ such that $\tau_{i_1-1}'= (\tau_{i_1-1}^{(p)}, \bz) \in \widetilde{A}_{\beta_0}$ and $\sigma_{i_1}' \in \widetilde{A}_{\alpha_{1}}$ for some $\alpha_1 \in \N$, $\alpha_{1} \ne \beta_0$. Since $\sigma_{i_1}'$ is a facet of $\tau_{i_1 -1}'$ and $\alpha_1 \ne \beta_0$, therefore $\sigma_{i_1}= \tau_{i_1 -1}$, $\alpha_{1} <_f \beta_0$. Now, $\sigma_j'$ is a facet of $\tau_{j-1}'$ for each $\sigma_j', \tau_{j-1}' \in \widetilde{A}_{\ao}$. So it follows that $\sigma_j'=(\sigma_j, \ao)$, $\tau_j'=(\tau_j, \ao)$, where $(\sigma_j)_{\ao} <_f (\tau_{j-1})_{\ao}$ for each $\sigma_j', \tau_{j-1}' \in \widetilde{A}_{\ao}$. Also from the construction of $\W$, we observe that $((\sigma_j)_{\ao}, (\tau_j)_{\ao}) \in \Wo_{\ao}$, for each $\sigma_j', \tau_j' \in \widetilde{A}_{\ao}$. Since, $(\sigma_j', \tau_{j-1}') \notin \W$ for each $\sigma_j', \tau_{j-1}' \in \widetilde{A}_{\ao}$, therefore again from the construction of $\W$, it follows that $((\sigma_j)_{\ao}, (\tau_{j-1})_{\ao}) \notin \Wo_{\ao}$ for each $\sigma_j', \tau_{j-1}' \in \widetilde{A}_{\ao}$. Now, if $\sigma_j' \in \widetilde{A}_{\alpha_{1}}$ for each $i_1 \le j \le (k+1)$, then $t=1$ and we obtain a trajectory of the second type. 
				
				Otherwise, again there exists $i_2 > i_1$, such that $\tau_{i_2-1}'=(\tau_{i_2-1}^{(p+1)}, \ao) \in \widetilde{A}_{\ao}$ and $\sigma_{i_2}' \in \widetilde{A}_{\alpha_2}$ for some $\alpha_2 \in \N$ where $\alpha_2 \ne \alpha_1$. Since $\sigma_{i_2}'$ is a facet of $\tau_{i_2 -1}'$ and $\alpha_2 \ne \ao$, therefore $\sigma_{i_2}= \tau_{i_2 -1}$ and $\alpha_{2} <_f \alpha_1$. As before, $(\sigma_j)_{\alpha_2} <_f (\tau_{j-1})_{\alpha_2}$ for each $\sigma_j', \tau_{j-1}' \in \widetilde{A}_{\alpha_2}$. Again from the construction of $\W$, $((\sigma_j)^{(p+1)}_{\alpha_2}, (\tau_j)^{(p+2)}_{\alpha_2}) \in \Wo_{\alpha_2}$, for each $\sigma_j', \tau_j' \in \widetilde{A}_{\alpha_2}$ and $((\sigma_j)_{\alpha_2}, (\tau_{j-1})_{\alpha_2}) \notin \Wo_{\alpha_2}$ for each $\sigma_j', \tau_{j-1}' \in \widetilde{A}_{\alpha_2}$. If $\sigma_j' \in \widetilde{A}_{\alpha_2}$ for $i_2 \le j \le (k+1)$, then $t=2$ and we obtain a trajectory of the second type, otherwise we continue in the same way. Hence we obtain a trajectory of the form stated in \ref{type2}. Consequently, we observe that $\dim(\sigma_{k+1})=\dim(\tau_0) +t-1$.
			\end{proof}
		
		The next lemma uses the properties of these trajectories to establish that $\W$ is indeed a gradient vector field.
		\begin{lem}
			$\W$ is a gradient vector field on $\wX$.
		\end{lem}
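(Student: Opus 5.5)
We must show there is no closed $\W$-trajectory. First, $\W$ is a discrete vector field: each pair in $\Va$ is $((\sigma,\alpha),(\tau,\alpha))$ with $\sigma <_f \tau$, so $(\sigma,\alpha)$ is a facet of $(\tau,\alpha)$. Each simplotope lies in at most one pair, because $\oW$ is a vector field on $\Aa$ and the sets $\wA$ are pairwise disjoint.

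For acyclicity, suppose $P: \tau_0', \sigma_1', \tau_1', \dots$ is a $\W$-trajectory with $\tau_0' = \tau_r'$ for some $r > 1$. Truncating $P$ at $\sigma_{r+1}'$ (or at $\sigma_r'$) still gives a $\W$-trajectory, so \autoref{traj} applies to the segment $\tau_0', \sigma_1', \dots, \tau_r'$. We then split into the two types.

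\textbf{Mixed segment.} Suppose the segment is of type \ref{type2}. Then the $\N$-coordinate strictly decreases each time the trajectory passes from one block to the next: $\beta_0 >_f \alpha_1 >_f \dots$. Once the trajectory leaves $\widetilde{A}_{\beta_0}$, every later simplotope has $\N$-coordinate a proper subset of $\beta_0$. In particular $\tau_r' \in \widetilde{A}_{\alpha_s}$ with $\alpha_s \subsetneq \beta_0$, so $\tau_r' \ne \tau_0'$. We could argue equally well with dimensions: the simplex coordinate $\dim(\tau_j)$ grows by one at each transition. The $\N$-coordinate is cleaner, so we use it.

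\textbf{Pure segment.} Suppose the segment is of type \ref{type1}, so it lies entirely in $\widetilde{A}_{\beta_0}$. Dropping the second coordinate $\beta_0$ gives the sequence $(\tau_0)_{\beta_0}, (\sigma_1)_{\beta_0}, (\tau_1)_{\beta_0}, \dots, (\tau_r)_{\beta_0}$. By the conditions listed in \autoref{traj}\ref{type1}, this is a $\Wo_{\beta_0}$-trajectory in $\bar{A}_{\beta_0}$. It satisfies $(\tau_0)_{\beta_0} = (\tau_r)_{\beta_0}$, so it is a closed $\Wo_{\beta_0}$-trajectory. This contradicts the hypothesis that $\Wo_{\beta_0}$ is a gradient vector field.

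The only delicate point is making sure \autoref{traj} legitimately applies to the truncated segment ending at a $\tau'$, and that in the mixed case the $\N$-index of $\tau_r'$ really differs from $\beta_0$. Both follow from the block structure described in \autoref{traj}, so no real obstacle is expected.
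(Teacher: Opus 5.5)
Your proof is correct and follows essentially the same route as the paper: a closed pure trajectory projects to a closed $\overline{\mathcal{W}}_{\beta_0}$-trajectory in $\bar{A}_{\beta_0}$, and a mixed one cannot return to $\tau_0'$ because of a monotone invariant. The differences are minor: the paper uses the growth of the simplex dimension ($\dim(\tau_j) > \dim(\tau_0)$ once the trajectory leaves $\widetilde{A}_{\beta_0}$) where you use the strictly shrinking nerve coordinate, and your truncation at $\tau_r'$ explicitly handles a mixed trajectory that repeats before leaving $\widetilde{A}_{\beta_0}$, a case the paper leaves implicit.
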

		\begin{proof}
			Any closed $\W$-trajectory is either pure or mixed. From \autoref{traj}\ref{type1}, we observe that any closed pure $\W$-trajectory in $\widetilde{A}_\alpha$, for some $\alpha \in \mathcal{N}(X)$, 
			corresponds to a closed $\oW$-trajectory in $\Aa$.
			Thus, a closed pure $\W$-trajectory cannot exist in $\wX$. 
			
			Now, let $P: \tau_0', \sigma_1', \tau_1', \dots , \sigma_k', \tau_k', \sigma_{k+1}'$ be a mixed $\W$-trajectory, where $\tau_i'=(\tau_i, \beta_i)$, for each $i \in \{0, \dots ,k\}$, $\sigma_i'=(\sigma_i, \alpha_i)$ for each $i \in [k+1]$. Therefore, from \autoref{traj}\ref{type2} there must exist $\sigma_r'=(\sigma_r, \alpha_r)$ such that $\sigma_r' \in \widetilde{A}_{\alpha_r}$, where $\alpha_r <_f \beta_0$, for some $r \ge 1$ and moreover, $\dim(\tau_j)> \dim(\tau_0)$ for any $j \ge r$. Therefore, $\tau_0' \ne \tau_j'$ for each $j \ge r$. So, there cannot exist a closed $\W$-trajectory $P$ which is mixed.
			Hence $\W$ is a gradient vector field on $\wX$.
		\end{proof}
		Therefore, for each $q \ge 0$, the $\W$-critical $q$-dimensional simplotopes are given by,
		$$ \CrW=\bigcup_{i=0}^q \bigcup_{\alpha^{(q-i)} \in \N}\{(\sigma, \alpha) \mid \sigma_\alpha \in \Crit_i^{\Wo_{\alpha}}(\Aa)\}.$$
		
		Let $\tau'=(\tau^{(i)}, \beta^{(q+1-i)}) \in \CrWq$ for a fixed $i \in \{0, \dots ,(q+1)\}$. We note from \autoref{traj} that the terminal simplotope of a trajectory initiating from $\tau'$ and ending at some $\sigma' \in \CrW$ must be of the form $(\sigma^{(i+t-1)}, \alpha^{(q+1-i-t)}) \in \CrW$ where $0 \le t \le q+1-i$. So, for each $t \in \{0, \dots , (q+1-i)\}$, let $(\sigma_t, \alpha_t)=(\sigma^{(i+t-1)}, \alpha^{(q+1-i-t)}) \in \CrW$. Then, we can write the $(q+1)^{th}$ boundary map of the simplotopal Thom--Smale chain complex of $\wX$ with respect to $\W$ as,
		$$ \pW_{q+1}(\tau')=
		\sum_{t=0}^{q+1-i} \sum_{(\sigma_t, \alpha_t) \in \CrW}\left(\sum_{P \in \Gamma(\tau', (\sigma_t, \alpha_t))} w(P)\right)(\sigma_t, \alpha_t),
		$$
		where $\Gamma(\tau', (\sigma_t, \alpha_t))$ represents the set of all $\W$-trajectories with initial simplotope $\tau'$ and terminal simplotope $(\sigma_t,\alpha_t)$, as introduced in Section~\ref{dmt}.
		\subsection{Isomorphism between simplotopal homology of $\wX$ and homology groups of $\Lc$}
		In this subsection, we first show that the simplotopal Thom--Smale chain complex of $\wX$ with respect to $\W$, $(C^{\W}_{\#}(\wX), \pW_{\#})$ is isomorphic to $(\Lq_{\#}(X), \pL_{\#})$, which will establish an isomorphism between the simplotopal homology of $\wX$ and the homology of $\Lc$. Consequentially, it will follow from \autoref{first} that the simplicial homology groups of $X$ are isomorphic to the respective homology groups of $\Lc$.
		
		We recall from Section~\ref{intro}, that for each $q \ge 0$,
		$$ L_q(X):= \bigcup_{i=0}^q \bigcup_{\alpha^{(q-i)} \in \N} \Crit_i^{\overline{\W}_{\alpha}}(\bar{A}_{\alpha}),$$
		and $\Lq_q(X):= \Z\langle L_q(X)\rangle$.
		Now, we further recall that the $q^{th}$ simplotopal Thom--Smale chain group of $\wX$ is given by,
		$$ \CW_q(\wX)= \mathbb{Z}\langle \CrW\rangle.$$
		
		Next, we prove some results which will play a crucial role in establishing an isomorphism between $(C^{\W}_{\#}(\wX), \pW_{\#})$ and $\Lc$.
		
		\begin{prop}\label{chiso}
			For each $q \ge 0$, $\CW_q(\wX) \cong \Lq_q(X)$. Further, for each $q \ge 0$, this isomorphism is given by,
			$f_q: \CW_q(\wX) \rightarrow \Lq_q(X)$, where for each $(\sigma^{(i)}, \alpha ^{(q-i)}) \in \CrW $, $0 \le i \le q$,
			$$ (\sigma^{(i)}, \alpha ^{(q-i)}) \longmapsto \sigma_\alpha^{(i
				)}.$$
		\end{prop}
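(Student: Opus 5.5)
The plan is to show that $f_q$ is a bijection between the bases $\CrW$ and $L_q(X)$; since both $\CW_q(\wX)$ and $\Lq_q(X)$ are free $\Z$-modules on these sets, the linear extension of a bijection of bases is a module isomorphism. The statement concerns only the chain groups, not the boundary maps, so compatibility with $\pW$ and $\pL$ is not needed here.

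First I will recall the description of the critical set obtained after the lemma showing $\W$ is a gradient vector field:
$$\CrW=\bigcup_{i=0}^q \bigcup_{\alpha^{(q-i)} \in \N}\{(\sigma, \alpha) \mid \sigma_\alpha \in \Crit_i^{\Wo_{\alpha}}(\Aa)\}.$$
I will also check the dimension bookkeeping. A simplotope $(\sigma,\alpha)\in\wA$ has dimension $\dim(\sigma)+\dim(\alpha)$, so a $q$-dimensional one with $\dim\sigma=i$ has $\dim\alpha=q-i$, which matches the indexing. Moreover, $\V_\alpha$ pairs $(\sigma,\alpha)$ exactly when $\oW$ pairs $\sigma_\alpha$. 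The only point to watch is the range $1\le q\le\dim(A_\alpha)-1$ in the definition of $\V_\alpha$. I read this as covering all pairs of $\oW$, so that criticality in $\wA$ coincides with $\oW$-criticality in $\Aa$, as the displayed description of $\CrW$ asserts. Since the paper already states that description, I will take it as given.

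The map $f_q$ sends $(\sigma^{(i)},\alpha^{(q-i)})$ to $\sigma_\alpha^{(i)}$. By the displayed formula this lies in $\Crit_i^{\Wo_\alpha}(\Aa)$ with $\alpha^{(q-i)}\in\N$, hence in $L_q(X)$. Conversely, every element of $L_q(X)$ is $\sigma_\alpha$ for some $\alpha^{(q-i)}\in\N$ with $\sigma_\alpha\in\Crit_i^{\Wo_\alpha}(\Aa)$, and $(\sigma,\alpha)$ is a preimage, so $f_q$ is surjective. For injectivity, the relabelled copies $\Aa$ are pairwise disjoint: $\sigma_\alpha\ne\sigma_\beta$ for $\alpha\ne\beta$. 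Hence $\sigma_\alpha$ determines both $\alpha$ and $\sigma$, and so determines $(\sigma,\alpha)$; equivalently, $(\sigma,\alpha)\mapsto\sigma_\alpha$ has the inverse $\sigma_\alpha\mapsto(\sigma,\alpha)$.

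Orientations are the only mild subtlety. An oriented simplotope $(\sigma,\alpha)$ carries orientations on both $\sigma$ and $\alpha$. I will fix once and for all an orientation on each $\alpha\in\N$ (say, increasing order) and identify the orientation of $(\sigma,\alpha)$ with that of $\sigma$, i.e.\ $\sigma_\alpha$, so that $f_q$ is well defined on oriented generators. Beyond this, I do not expect any real obstacle; the statement is essentially bookkeeping.
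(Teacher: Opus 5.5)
Your proposal is correct and follows the paper's proof: the paper likewise defines $f$ on the generators $\CrW$ via the displayed characterisation of the $\W$-critical simplotopes, writes down the inverse $g\colon \sigma_\alpha \mapsto (\sigma,\alpha)$, and concludes from $f\circ g=\id$ and $g\circ f=\id$ on generators. Your two side remarks are sensible additions that the paper leaves tacit: reading the range in the definition of $\Va$ as covering all pairs of $\oW$ (the paper's stated range appears to omit the top-dimensional pairs), and fixing orientations on the nerve simplices.
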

		\begin{proof}
			Let us construct $f: \CW_q(\wX) \rightarrow \Lq_q(X)$, where for each $(\sigma^{(i)}, \alpha ^{(q-i)}) \in \CrW $, $0 \le i \le q$,
			$$ (\sigma^{(i)}, \alpha ^{(q-i)}) \longmapsto \sigma_\alpha,$$
			It follows from the characterisation of the $\W$-critical simplotopes that $\sigma_\alpha \in \Crit_i^{\oW}(\Aa)$. Hence, from the definition of $L_q$, it follows that $\sigma_\alpha \in L_q$. Therefore, this map is well-defined. Now we define another map $g:  \Lq_q(X) \rightarrow  \CW_q(\wX)$, as follows. Let $\sigma_\alpha^{(i)} \in L_q $ where $0 \le i \le q$, that is, $\sigma_\alpha \in \Crit_i^{\oW}(\Aa)$, where $\alpha^{(q-i)} \in \N$ (from the definition of $L_q$). Then,
			$$ \sigma_\alpha \longmapsto(\sigma^{(i)}, \alpha ^{(q-i)}).$$
			Again, from the characterisation of the $\W$-critical simplotopes, it follows that $(\sigma, \alpha) \in \CrW$. Hence, this map is well-defined. We define these maps on the set of generators and extend them linearly to their domains. It follows from the construction of the maps, that $f \circ g =\id$ and $g \circ f = \id$ on the set of generators and hence, on the entire domains. Therefore, $\CW_q(\wX) \cong \Lq_q(X)$.
		\end{proof}
		Now, we show that there exists a one-one correspondence between the elements of $\Gba$ and $\Tf$, that is, $\Tba$ (from \autoref{chiso}) for each $(\tau,\beta) \in \CrWq$, $(\sigma, \alpha) \in \CrW$, $q \ge 0$.
		\begin{prop}\label{bij}
			Let $(\tau,\beta) \in \CrWq, (\sigma, \alpha) \in \CrW$, $q \ge 0$. Then there exists a bijection between $\Gba$ and $\Tba$.
		\end{prop}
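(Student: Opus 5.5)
The plan is to define an explicit map $\Phi:\Gba\to\Tba$ which simply forgets the simplotopal structure. Each simplotope $(\rho,\gamma)$ of a $\W$-trajectory is sent to the labelled simplex $\rho_\gamma$. We then check, using the classification in \autoref{traj}, that the image is a generalised trajectory in the sense of \autoref{gt}. The inverse $\Psi$ sends each labelled simplex $\rho_\gamma$ back to $(\rho,\gamma)$. Since both maps act termwise, $\Phi$ and $\Psi$ are mutually inverse as soon as both are well defined. Both endpoints are critical in the right sense: $(\tau,\beta)\in\CrWq$ and $(\sigma,\alpha)\in\CrW$ correspond under \autoref{chiso} to $\tau_\beta\in L_{q+1}$ and $\sigma_\alpha\in L_q$.

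\textbf{Pure trajectories.} Take $P\in\Gba$. By \autoref{traj}, $P$ is either pure or mixed. If $P$ is pure in $\widetilde{A}_\beta$, then $\alpha=\beta$. Every pair $((\sigma_j,\beta),(\tau_j,\beta))\in\V_\beta$ corresponds to $((\sigma_j)_\beta,(\tau_j)_\beta)\in\Wo_\beta$. Facet relations inside $\widetilde{A}_\beta$ with fixed second coordinate are exactly facet relations of $A_\beta$. Non-pairing is preserved by the definition of $\V_\beta$. Hence $\Phi(P)$ is a generalised trajectory of the first kind.

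\textbf{Mixed trajectories.} If $P$ is mixed, \autoref{traj}\ref{type2} supplies $\alpha_t<_f\dots<_f\alpha_1<_f\beta_0=\beta$ and indices $i_1<\dots<i_t$. We match these with the data of \autoref{gt}\ref{second}: $\alpha_0:=\beta$, and $j_l$ corresponds to the position $i_l-1$. Each crossing $(\tau_{i_l-1},\alpha_{l-1})\to(\sigma_{i_l},\alpha_l)$ with $\sigma_{i_l}=\tau_{i_l-1}$ becomes the arrow $\Rightarrow$. The segments inside each $\widetilde{A}_{\alpha_l}$ become the $\Wo_{\alpha_l}$-paired and facet steps in conditions (ii)--(vi).

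There is one subtlety to handle carefully. Just after a crossing, the simplotope $(\sigma_{i_l},\alpha_l)$ is paired upward in $\V_{\alpha_l}$, so it is not critical. The segment then continues with the simplex dimension raised by one. This matches the indexing $\tau_{l,m}$, $\tau_{l+1,m+1}$ of \autoref{gt}. The case $j_1=0$ in \autoref{gt} (an immediate crossing) corresponds to $i_1=1$, and $r=0$ corresponds to $P$ ending immediately after the last crossing with $\sigma_{k+1}$ critical.

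\textbf{Inverse and main obstacle.} Conversely, given a generalised trajectory, we apply $\Psi$ and verify the three $\W$-trajectory axioms. Consecutive entries are facets in $\wX$: either the simplex part drops by a facet with the same label, or the simplex is fixed and the label drops by one element. The latter is a facet of the simplotope, since the simplotope dimension is $\dim\sigma+\dim\alpha$. The pairs lie in $\W$ by definition of $\V_{\alpha_l}$. The forbidden steps $(\sigma_j',\tau'_{j-1})\notin\W$ hold for label-drop steps automatically, because $\W$ only pairs within a single $\widetilde{A}_\gamma$; within a single label they hold by the non-pairing conditions of \autoref{gt}.

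The main obstacle is the index bookkeeping in the mixed case: checking that the dimension pattern $\dim(\sigma_{k+1})=\dim(\tau_0)+t-1$ and the lengths of the segments line up exactly with $j_1<\dots<j_t$ and $r$. In particular, no degenerate configuration, such as two consecutive label drops, may occur on one side but not the other. I would settle this by observing that a label drop from $\tau'_{j-1}$ forces $\sigma'_j$ to be non-critical in the interior, hence paired upward. This forbids two consecutive drops and matches the requirement $j_l<j_{l+1}$.
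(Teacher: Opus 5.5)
Your proposal is correct and follows essentially the same route as the paper. The paper likewise sends each simplotope $(\rho,\gamma)$ to $\rho_\gamma$, uses \autoref{traj} to map pure trajectories to generalised trajectories of the first kind and mixed ones to the second kind (your index match $j_l=i_l-1$ is right), and takes the termwise inverse. Your explicit check that the inverse image satisfies the $\W$-trajectory axioms is a small improvement: the paper only cites \autoref{traj}, which gives the necessary form of a $\W$-trajectory, not sufficiency.
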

		\begin{proof}
			First, we construct a map, $\psi: \Gba \rightarrow \Tba$ and then we construct $\rho: \Tba \rightarrow \Gba$. Finally, we show that $\rho \circ \psi = \id$ and $\psi \circ \rho = \id$.
			
			For the construction of $\psi$, let $P \in \Gba$. Therefore, from \autoref{traj}, we define $\psi$ for the following two cases.\\
			
			\noindent \textbf{Case I:} $P$ is a pure trajectory in $\widetilde{A}_{\beta}$. Let $(\tau^{(i)}, \beta) \in \CrWq$, $(\sigma^{(i-1)},\alpha) \in \CrW$ where $1\le i \le (q+1)$. Then, from \autoref{traj}\ref{type1}, $P$ is of the following form.	
			$$ P: ((\tau,\beta)=)~(\tau_0^{(i)},\beta), (\sigma_1^{(i-1)},\beta), (\tau_1^{(i)}, \beta), \dots, (\sigma_k^{(i-1)},\beta), (\tau_k^{(i)},\beta), (\sigma_{k+1}^{(i-1)}, \beta) ~(=(\sigma,\alpha)). $$
			
			Again, from \autoref{traj}\ref{type1}, we know that such a trajectory corresponds to a unique $\Wo_{\beta}$-trajectory. So, we define $\psi(P)$ as this unique $\Wo_{\beta}$-trajectory, that is,
			$$ \psi(P): (\tau_\beta=)~(\tau_0)^{(i)}_{\beta}, (\sigma_1)^{(i-1)}_{\beta}, (\tau_1)^{(i)}_{\beta}, \dots, (\sigma_k)^{(i-1)}_{\beta}, (\tau_k)^{(i)}_{\beta}, (\sigma_{k+1})^{(i-1)}_{\beta} ~(=\sigma_\alpha).$$
			
			Thus, $\psi$ is well-defined and $\psi(P)$ is a generalised trajectory of the first kind (as defined in \autoref{gt}, Section~\ref{intro}), that is, $\psi(P)\in \Tba$.\\
			
			\noindent \textbf{Case II:} $P$ is a mixed trajectory. Let $(\tau^{(i)},\beta^{(q+1-i)}) \in \CrWq$, $(\sigma^{(i+t-1)},\alpha^{(q+1-i-t)}) \in \CrW$ (the dimensions of $\sigma$, $\alpha$ follow from \autoref{traj}\ref{type2}), where $0 \le i < (q+1)$ and $1 \leq t \leq (q+1-i)$.
			Then from \autoref{traj} \ref{type2}, $P$ is of the form,
			$$ \begin{aligned}
				P:~ &((\tau, \beta)=)~(\tau_{1,0}^{(i)}, \alpha_0), (\tau_{0,1}^{(i-1)}, \alpha_0), (\tau_{1,1}^{(i)}, \alpha_0), \dots , (\tau_{0,j_1}^{(i-1)}, \alpha_0),(\tau_{1,j_1}^{(i)}, \alpha_0),(\tau_{1,j_1}^{(i)}, \alpha_1), \\ & (\tau_{2,j_1+1}^{(i+1)}, \alpha_1), (\tau_{1,j_1+1}^{(i)}, \alpha_1),
				(\tau_{2,j_1+2}^{(i+1)}, \alpha_1), \dots , (\tau_{t-1,j_t-1}^{(i+t-2)}, \alpha_{t-1}), (\tau_{t,j_t}^{(i+t-1)}, \alpha_{t-1}), (\tau_{t,j_t}^{(i+t-1)}, \alpha_t),	\\ &(\tau_{t+1,j_t+1}^{(i+t)},\alpha_t), (\tau_{t,j_t+1}^{(i+t-1)}, \alpha_t), \dots, (\tau_{t+1,j_t+r}^{(i+t)},\alpha_t), (\tau_{t,j_t+r}^{(i+t-1)}, \alpha_t)~(=(\sigma,\alpha)).
			\end{aligned} $$
			where $\alpha=\alpha_t^{(q+1-i-t)} <_f \alpha_{t-1} <_f \dots <_f\alpha_0^{(q+1-i)}=\beta$.
			In this case, we define $\psi(P)$ as,
			$$ \begin{aligned}
				\psi(P):~ (\tau_\beta= )~&(\tau_{1,0})^{(i)}_{\alpha_0}, (\tau_{0,1})^{(i-1)}_{\alpha_0}, (\tau_{1,1})^{(i)}_{\alpha_0}, \dots , (\tau_{0,j_1})^{(i-1)}_{\alpha_0},(\tau_{1,j_1})^{(i)}_{\alpha_0},(\tau_{1,j_1})^{(i)}_{\alpha_1}, (\tau_{2,j_1+1})^{(i+1)}_{\alpha_1}, \\ &(\tau_{1,j_1+1})^{(i)}_{\alpha_1}, (\tau_{2,j_1+2})^{(i+1)}_{\alpha_1}, \dots , (\tau_{t-1,j_t-1})^{(i+t-2)}_{\alpha_{t-1}}, (\tau_{t,j_t})^{(i+t-1)}_{\alpha_{t-1}}, (\tau_{t,j_t})^{(i+t-1)}_{\alpha_t},\\ &(\tau_{t+1,j_t+1})^{(i+t)}_{\alpha_t}, (\tau_{t,j_t+1})^{(i+t-1)}_{\alpha_t}, \dots, (\tau_{t+1,j_t+r})^{(i+t)}_{\alpha_t}, (\tau_{t,j_t+r})^{(i+t-1)}_{\alpha_t}~(=\sigma_\alpha).
			\end{aligned}  $$
			(Informally speaking, each simplotope $(\sigma, \alpha)$ of $P$ is mapped to the copy of $\sigma$ in $\Aa$ in $\psi(P)$.)
			
			It follows from the properties of mixed trajectories (\autoref{traj}\ref{type2}) that $\psi$ is well-defined and $\psi(P) \in \Tba$, more precisely, $\psi(P)$ is a generalised trajectory of the second kind, as introduced in Section~\ref{intro} (\autoref{gt}).
			
			Next, for the construction of $\rho$, let $P \in \Tba$. Thus, we now define $\rho$ for the following two cases, according to the two types of generalised trajectories as introduced in Section~\ref{intro}.\\
			
			\noindent \textbf{Case I:} Let $\tau_\beta^{(i)} \in L_{q+1}(X), \sigma_\alpha^{(i-1)} \in L_q(X)$ where $1 \le i \le (q+1)$ and $P$ be a generalised trajectory of the first kind. Thus, $P$ is of the form,
			$$ P: (\tau_\beta=)~(\tau_0)^{(i)}_{\beta}, (\sigma_1)^{(i-1)}_{\beta}, (\tau_1)^{(i)}_{\beta}, \dots, (\sigma_k)^{(i-1)}_{\beta}, (\tau_k)^{(i)}_{\beta}, (\sigma_{k+1})^{(i-1)}_{\beta} ~(=\sigma_\alpha).$$
			We define $\rho(P)$ as,
			$$ \rho(P): ((\tau,\beta)=)~(\tau_0^{(i)},\beta), (\sigma_1^{(i-1)},\beta), (\tau_1^{(i)}, \beta), \dots, (\sigma_k^{(i-1)},\beta), (\tau_k^{(i)},\beta), (\sigma_{k+1}^{(i-1)}, \beta) ~(=(\sigma,\alpha)). $$
			It follows from the construction that $\rho(P)$ is well-defined and $\rho(P) \in \Gba$, more precisely, $\rho(P)$ is a pure trajectory in $\wA$ (from \autoref{traj}\ref{type1}). \\
			
			\noindent \textbf{Case II:}  Let $\tau_\beta^{(i)} \in L_{q+1}$, $\sigma_\alpha^{(i+t-1)} \in L_q$ where $0 \le i < (q+1)$ and $1 \leq t \leq (q+1-i)$, and $P$ be of the form,
			
			$$ \begin{aligned}
				P:~ &(\tau_{1,0})^{(i)}_{\alpha_0}, (\tau_{0,1})^{(i-1)}_{\alpha_0}, (\tau_{1,1})^{(i)}_{\alpha_0}, \dots , (\tau_{0,j_1})^{(i-1)}_{\alpha_0},(\tau_{1,j_1})^{(i)}_{\alpha_0},(\tau_{1,j_1})^{(i)}_{\alpha_1}, (\tau_{2,j_1+1})^{(i+1)}_{\alpha_1}, \\ &(\tau_{1,j_1+1})^{(i)}_{\alpha_1}, (\tau_{2,j_1+2})^{(i+1)}_{\alpha_1}, \dots , (\tau_{t-1,j_t-1})^{(i+t-2)}_{\alpha_{t-1}}, (\tau_{t,j_t})^{(i+t-1)}_{\alpha_{t-1}}, (\tau_{t,j_t})^{(i+t-1)}_{\alpha_t},\\ &(\tau_{t+1,j_t+1})^{(i+t)}_{\alpha_t}, (\tau_{t,j_t+1})^{(i+t-1)}_{\alpha_t}, \dots, (\tau_{t+1,j_t+r})^{(i+t)}_{\alpha_t}, (\tau_{t,j_t+r})^{(i+t-1)}_{\alpha_t}.
			\end{aligned}  $$
			where $\alpha=\alpha_t^{(q+1-i-t)} <_f \alpha_{t-1} <_f \dots <_f\alpha_0^{(q+1-i)}=\beta$.
			In this case, we define $\rho(P)$ as,
			$$ \begin{aligned}
				\rho(P):~ &((\tau, \beta)=)~(\tau_{1,0}^{(i)}, \alpha_0), (\tau_{0,1}^{(i-1)}, \alpha_0), (\tau_{1,1}^{(i)}, \alpha_0), \dots , (\tau_{0,j_1}^{(i-1)}, \alpha_0),(\tau_{1,j_1}^{(i)}, \alpha_0),(\tau_{1,j_1}^{(i)}, \alpha_1), \\ & (\tau_{2,j_1+1}^{(i+1)}, \alpha_1), (\tau_{1,j_1+1}^{(i)}, \alpha_1),
				(\tau_{2,j_1+2}^{(i+1)}, \alpha_1), \dots , (\tau_{t-1,j_t-1}^{(i+t-2)}, \alpha_{t-1}), (\tau_{t,j_t}^{(i+t-1)}, \alpha_{t-1}), (\tau_{t,j_t}^{(i+t-1)}, \alpha_t),	\\ &(\tau_{t+1,j_t+1}^{(i+t)},\alpha_t), (\tau_{t,j_t+1}^{(i+t-1)}, \alpha_t), \dots, (\tau_{t+1,j_t+r}^{(i+t)},\alpha_t), (\tau_{t,j_t+r}^{(i+t-1)}, \alpha_t)~(=(\sigma,\alpha)).
			\end{aligned} $$

			It follows from \autoref{traj}\ref{type2} that $\rho$ is well-defined and $\rho(P) \in \Gba$, more precisely, $\rho(P)$ is a mixed trajectory.
			
			It can be observed from the construction of the maps $\psi$ and $\rho$ that $\psi \circ \rho= \id$ and $\rho \circ \psi = \id$. Therefore, it follows that $\psi: \Gba \rightarrow \Tba$ is a bijection with its inverse as $\rho: \Tba \rightarrow \Gba$.
		\end{proof}
		The following observation describes the isomorphism stated in the above proposition. This follows naturally from the proof of the above proposition (\autoref{bij}).
		
		\begin{obs}\label{psi}
			Let $(\tau, \beta) \in \CrWq$, $(\sigma, \alpha) \in \CrW$, and $P \in \Gba$. Then, the isomorphism $\psi: \Gba \rightarrow \Tba$ is defined for the following two cases,
			
			\begin{enumerate}[label=(\roman*)]
				\item  $P$ is a pure trajectory in $\widetilde{A}_{\beta}$. Suppose, $(\tau^{(i)}, \beta) \in \CrWq$, $(\sigma^{(i-1)},\alpha) \in \CrW$, where $1\le i \le (q+1)$ and $P$ is of the form,
				$$ P: ((\tau,\beta)=)~(\tau_0^{(i)},\beta), (\sigma_1^{(i-1)},\beta), (\tau_1^{(i)}, \beta), \dots, (\sigma_k^{(i-1)},\beta), (\tau_k^{(i)},\beta), (\sigma_{k+1}^{(i-1)}, \beta) ~(=(\sigma,\alpha)). $$
				In this case, $\psi(P)$ is defined as,
				$$ \psi(P): (\tau_\beta=)~(\tau_0)^{(i)}_{\beta}, (\sigma_1)^{(i-1)}_{\beta}, (\tau_1)^{(i)}_{\beta}, \dots, (\sigma_k)^{(i-1)}_{\beta}, (\tau_k)^{(i)}_{\beta}, (\sigma_{k+1})^{(i-1)}_{\beta} ~(=\sigma_\alpha).$$ \label{psi1}
				\item $P$ is a mixed trajectory. Suppose, $(\tau^{(i)},\beta^{(q+1-i)}) \in \CrWq$, $(\sigma^{(i+t-1)},\alpha^{(q+1-i-t)}) \in \CrW$ where $0 \le i < (q+1)$ and $1 \leq t \leq (q+1-i)$ and $P$ is of the form,
				$$ \begin{aligned}
					P:~  &((\tau, \beta)=)~(\tau_{1,0}^{(i)}, \alpha_0), (\tau_{0,1}^{(i-1)}, \alpha_0), (\tau_{1,1}^{(i)}, \alpha_0), \dots , (\tau_{0,j_1}^{(i-1)}, \alpha_0),(\tau_{1,j_1}^{(i)}, \alpha_0),(\tau_{1,j_1}^{(i)}, \alpha_1), \\ & (\tau_{2,j_1+1}^{(i+1)}, \alpha_1), (\tau_{1,j_1+1}^{(i)}, \alpha_1),
					(\tau_{2,j_1+2}^{(i+1)}, \alpha_1), \dots , (\tau_{t-1,j_t-1}^{(i+t-2)}, \alpha_{t-1}), (\tau_{t,j_t}^{(i+t-1)}, \alpha_{t-1}), (\tau_{t,j_t}^{(i+t-1)}, \alpha_t),	\\ &(\tau_{t+1,j_t+1}^{(i+t)},\alpha_t), (\tau_{t,j_t+1}^{(i+t-1)}, \alpha_t), \dots, (\tau_{t+1,j_t+r}^{(i+t)},\alpha_t), (\tau_{t,j_t+r}^{(i+t-1)}, \alpha_t)~(=(\sigma,\alpha)).
				\end{aligned} $$
				where $\alpha=\alpha_t^{(q+1-i-t)} <_f \alpha_{t-1} <_f \dots <_f\alpha_0^{(q+1-i)}=\beta$.
				In this case, $\psi(P)$ is defined as,
				$$ \begin{aligned}
					\psi(P):~ (\tau_\beta= )~&(\tau_{1,0})^{(i)}_{\alpha_0}, (\tau_{0,1})^{(i-1)}_{\alpha_0}, (\tau_{1,1})^{(i)}_{\alpha_0}, \dots , (\tau_{0,j_1})^{(i-1)}_{\alpha_0},(\tau_{1,j_1})^{(i)}_{\alpha_0},(\tau_{1,j_1})^{(i)}_{\alpha_1}, (\tau_{2,j_1+1})^{(i+1)}_{\alpha_1}, \\ &(\tau_{1,j_1+1})^{(i)}_{\alpha_1}, (\tau_{2,j_1+2})^{(i+1)}_{\alpha_1}, \dots , (\tau_{t-1,j_t-1})^{(i+t-2)}_{\alpha_{t-1}}, (\tau_{t,j_t})^{(i+t-1)}_{\alpha_{t-1}}, (\tau_{t,j_t})^{(i+t-1)}_{\alpha_t},\\ &(\tau_{t+1,j_t+1})^{(i+t)}_{\alpha_t}, (\tau_{t,j_t+1})^{(i+t-1)}_{\alpha_t}, \dots, (\tau_{t+1,j_t+r})^{(i+t)}_{\alpha_t}, (\tau_{t,j_t+r})^{(i+t-1)}_{\alpha_t}~( =\sigma_\alpha).
				\end{aligned}  $$ \label{psi2a}
		\end{enumerate}
	\end{obs}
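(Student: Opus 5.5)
This observation simply records, in one place, the map $\psi$ built in the proof of \autoref{bij}. So the plan is to read the two defining formulas off that construction and check that they cover every $P \in \Gba$ unambiguously. No new combinatorics is needed; the work is in confirming that the case split and the index bookkeeping match those of \autoref{traj} and \autoref{gt}.

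\textbf{Step 1 (exhaustive, exclusive cases).} By \autoref{traj}, every $\W$-trajectory $P \in \Gba$ is either pure in $\widetilde{A}_\beta$ or mixed, and not both. A pure trajectory stays in $\widetilde{A}_{\beta}$, whereas a mixed one enters some $\widetilde{A}_{\alpha_1}$ with $\alpha_1 <_f \beta$. Hence the two cases (i) and (ii) of the observation partition $\Gba$, and defining $\psi$ case by case is legitimate.

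\textbf{Step 2 (dimension and index bookkeeping).} For the pure case, \autoref{traj}\ref{type1} gives $\alpha=\beta$ and $\dim(\sigma)=\dim(\tau)-1$. Since $(\tau,\beta)\in\CrWq$ forces $\dim\beta = q+1-i$, we get $1\le i\le q+1$, matching (i). For the mixed case, \autoref{traj}\ref{type2} gives $\dim(\sigma)=\dim(\tau)+t-1$ and a chain $\alpha_t<_f\cdots<_f\alpha_0=\beta$. This yields $\alpha^{(q+1-i-t)}$ with $1\le t\le q+1-i$, hence $0\le i<q+1$. Relabelling the simplices of $P$ block by block as $\tau_{l,m}$, following the conditions (i)--(viii) of \autoref{traj}\ref{type2}, produces exactly the displayed form of $P$ in (ii).

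\textbf{Step 3 (read off $\psi$).} In each case $\psi$ was defined by sending each simplotope $(\rho,\gamma)$ of $P$ to the relabelled simplex $\rho_\gamma\in\bar A_\gamma$, which is exactly what the displays in (i) and (ii) state. Well-definedness, and the fact that $\psi$ is a bijection onto $\Tba$ with inverse $\rho$, were already established in \autoref{bij}, so nothing further needs proving.

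The only place needing care is Step 2. There one must check that the transitions $(\tau_{l,j_l},\alpha_{l-1}),(\tau_{l,j_l},\alpha_l)$ in $P$ correspond exactly to the $\Rightarrow$ steps of \autoref{gt}\ref{second}, and that the ranges of $i$ and $t$ match. I expect this to be routine once the condition list of \autoref{traj}\ref{type2} is placed beside that of \autoref{gt}\ref{second}.
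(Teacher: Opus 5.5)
Your proposal is correct and follows the paper's own route. The observation just restates the case-by-case definition of $\psi$ from the proof of \autoref{bij}: trajectories are split into pure and mixed via \autoref{traj}, and each simplotope $(\rho,\gamma)$ is sent to $\rho_\gamma$. The paper offers no argument beyond pointing back to that construction.

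Your extra bookkeeping is consistent with it: the blocks match under $j_l = i_l - 1$, and item (vii) of \autoref{traj}\ref{type2} supplies the $\Rightarrow$ steps. One small slip: in the pure case, $i \le q+1$ comes from $\dim\beta = q+1-i \ge 0$, but $i \ge 1$ comes from $\dim(\sigma) = i-1 \ge 0$, not from $\dim\beta$.
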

	\begin{prop}\label{weight}
		Let $(\tau, \beta) \in \CrWq$, $(\sigma, \alpha) \in \CrW$ and $P \in \Gba$. Then, $w(P)=w_G(\psi(P))$, where $\psi$ is the bijection between $\Gba$ and $\Tba$ (as asserted in \autoref{bij}).
	\end{prop}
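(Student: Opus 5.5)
We split into the two cases of \autoref{psi} and compare the factors of $w(P)$ with those of $w_G(\psi(P))$ one arrow at a time. In both cases $w(P)$ is a product of incidence numbers $\langle (\tau_j,\beta_j),(\sigma_{j+1},\alpha_{j+1})\rangle$ and $\langle(\tau_j,\beta_j),(\sigma_j,\alpha_j)\rangle$, together with one factor $(-1)$ for each paired step. Each simplotope here has two factors, $(\sigma,\alpha)$, with $\sigma$ a simplex of $X$ and $\alpha$ a simplex of $\N$. So \autoref{inc} reduces every such incidence number to either a simplicial incidence in $X$ or $(-1)^{\dim(\sigma)}\langle \alpha',\alpha\rangle$. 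Throughout, we use that the orientation of $\sigma_\alpha$ in $\Aa$ is that of $\sigma$ in $X$, so that incidences in $\Aa$ agree with those in $X$.

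\textbf{Case (i), pure trajectories.} Here every step keeps the second factor $\beta$ fixed and lowers the first factor. By part 1 of \autoref{inc}, each $\langle(\tau_j,\beta),(\sigma_{j+1},\beta)\rangle=\langle\tau_j,\sigma_{j+1}\rangle$, and similarly for the paired incidences. The products defining $w(P)$ and $w_G(\psi(P))$ (\autoref{gt}\ref{1st}) therefore coincide term by term.

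\textbf{Case (ii), mixed trajectories.} We group the factors of $w(P)$ into three kinds. First, the initial segment in $\widetilde A_{\alpha_0}$ consists of downward steps $\tau_{1,k}\to\tau_{0,k+1}$ and pairings $\tau_{0,k+1}\rightarrowtail\tau_{1,k+1}$. These give exactly $\prod_{k=0}^{j_1-1}(-1)\langle\tau_{1,k},\tau_{0,k+1}\rangle\langle\tau_{1,k+1},\tau_{0,k+1}\rangle$, by part 1 of \autoref{inc} as in Case (i). Second, within each block $\widetilde A_{\alpha_l}$ with $l\ge1$, the pairings $\tau_{l,m}\rightarrowtail\tau_{l+1,m+1}$ and the downward steps $\tau_{l+1,m+1}\to\tau_{l,m+1}$ again involve only the first factor. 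They produce the middle and last products in $w_G$, including the lone factor $(-1)\langle\tau_{l+1,j_{l+1}},\tau_{l,j_{l+1}-1}\rangle$ coming from the last pairing before a jump. Third, each jump $(\tau_{l,j_l},\alpha_{l-1})\Rightarrow(\tau_{l,j_l},\alpha_l)$ is a facet relation in the second factor. By part 2 of \autoref{inc}, it contributes $(-1)^{\dim(\tau_{l,j_l})}\langle\alpha_{l-1},\alpha_l\rangle=(-1)^{i+l-1}\langle\alpha_{l-1},\alpha_l\rangle$. The main task is then to verify that the factors of $w(P)$ are indexed exactly as in the formula for $w_G$. In particular, a jump is never a paired step: \autoref{traj}\ref{type2} shows that the simplotope after a jump is immediately paired upward. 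Hence each jump contributes only its incidence, with no extra $(-1)$ and no second incidence.

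Collecting the jump signs gives $\prod_{l=1}^t(-1)^{i+l-1}=(-1)^{it+t(t-1)/2}=(-1)^{\phi(i,t)}$, times $\prod_{l=1}^t\langle\alpha_{l-1},\alpha_l\rangle$. Combined with the other two groups, this reproduces $w_G(\psi(P))$ exactly.

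We expect the main obstacle to be this sign bookkeeping. Specifically, we must confirm that \autoref{inc} applies with the ordering convention in which the simplex factor comes first. This is what puts the sign $(-1)^{\dim\sigma}$ on the nerve-side incidence rather than on the simplex side. We must also check the boundary cases $j_1=0$, $r=0$, and $i=0$ (where the initial segment is empty), where some of the products are empty and the formula must still match.
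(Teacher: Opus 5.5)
Your proposal is correct and follows essentially the same route as the paper: split into pure and mixed trajectories, reduce every simplotopal incidence via \autoref{inc}, and collect the $t$ jump factors $(-1)^{i+l-1}\langle\alpha_{l-1},\alpha_l\rangle$ into $(-1)^{\phi(i,t)}\prod_{l=1}^{t}\langle\alpha_{l-1},\alpha_l\rangle$. The one slip is your claim that the simplotope after a jump is always paired upward, which fails for the last jump when $r=0$ (there it is the terminal critical simplotope); since $\W$ only pairs simplotopes within a single $\widetilde{A}_\alpha$, a jump is always a plain facet step contributing just its incidence, so your conclusion still holds.
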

	\begin{proof}
		Let $P \in \Gba$. Then, from \autoref{traj}, we have the following two cases.\\
		
		\noindent \textbf{Case I:} $P$ is a pure trajectory in $\widetilde{A}_{\beta}$. Let $(\tau^{(i)}, \beta) \in \CrWq$, $1\le i \le (q+1)$. Then, from \autoref{traj}\ref{type1}, $P$ is of the following form.
		$$ P: ((\tau,\beta)=)~(\tau_0^{(i)},\beta), (\sigma_1^{(i-1)},\beta), (\tau_1^{(i)}, \beta), \dots, (\sigma_k^{(i-1)},\beta), (\tau_k^{(i)},\beta), (\sigma_{k+1}^{(i-1)}, \beta) ~(=(\sigma,\alpha)). $$
		and from Observation~\ref{psi}\ref{psi1}, $\psi(P)$ is given by,
		$$ \psi(P): (\tau_\beta=)~(\tau_0)^{(i)}_{\beta}, (\sigma_1)^{(i-1)}_{\beta}, (\tau_1)^{(i)}_{\beta}, \dots, (\sigma_k)^{(i-1)}_{\beta}, (\tau_k)^{(i)}_{\beta}, (\sigma_{k+1})^{(i-1)}_{\beta} ~(=\sigma_\alpha).$$
		Then we recall from Section~\ref{dmt}, that
		\begin{equation*}
			\begin{aligned}
				w(P)&= \left(\prod_{i=0}^{k-1}(-1)\langle (\tau_i, \beta), (\sigma_{i+1}, \beta)\rangle \langle (\tau_{i+1}, \beta), (\sigma_{i+1}, \beta) \rangle\right) \langle (\tau_k, \beta), (\sigma_{k+1}, \beta)\rangle\\
				&= \left(\prod_{i=0}^{k-1}(-1)\langle \tau_i, \sigma_{i+1}\rangle \langle \tau_{i+1}, \sigma_{i+1} \rangle\right) \langle \tau_k, \sigma_{k+1}\rangle, \hspace{2em} \text{ (follows from \autoref{inc} in Section~\ref{prelim})} \\ &
				= w_G(\psi(P)), \hspace{2em} \text{ (follows from \autoref{gt}).}
			\end{aligned}
		\end{equation*}
		
		\noindent \textbf{Case II:} $P$ is a mixed trajectory. Let $(\tau^{(i)},\beta^{(q+1-i)}) \in \CrWq$, $(\sigma^{(i+t-1)},\alpha^{(q+1-i-t)}) \in \CrW$ (the dimensions of $(\sigma, \alpha)$ follow from \autoref{traj}\ref{type2}) where $0 \le i < (q+1)$ and $1 \leq t \leq (q+1-i)$.
		Then from \autoref{traj}\ref{type2}, $P$ is of the form,
		$$ \begin{aligned}
			P:~ &((\tau, \beta)=)~(\tau_{1,0}^{(i)}, \alpha_0), (\tau_{0,1}^{(i-1)}, \alpha_0), (\tau_{1,1}^{(i)}, \alpha_0), \dots , (\tau_{0,j_1}^{(i-1)}, \alpha_0),(\tau_{1,j_1}^{(i)}, \alpha_0),(\tau_{1,j_1}^{(i)}, \alpha_1), \\ & (\tau_{2,j_1+1}^{(i+1)}, \alpha_1), (\tau_{1,j_1+1}^{(i)}, \alpha_1),
			(\tau_{2,j_1+2}^{(i+1)}, \alpha_1), \dots , (\tau_{t-1,j_t-1}^{(i+t-2)}, \alpha_{t-1}), (\tau_{t,j_t}^{(i+t-1)}, \alpha_{t-1}), (\tau_{t,j_t}^{(i+t-1)}, \alpha_t),	\\ &(\tau_{t+1,j_t+1}^{(i+t)},\alpha_t), (\tau_{t,j_t+1}^{(i+t-1)}, \alpha_t), \dots, (\tau_{t+1,j_t+r}^{(i+t)},\alpha_t), (\tau_{t,j_t+r}^{(i+t-1)}, \alpha_t)~(=(\sigma,\alpha)).
		\end{aligned} $$
		where $\alpha=\alpha_t^{(q+1-i-t)} <_f \alpha_{t-1} <_f \dots <_f\alpha_0^{(q+1-i)}=\beta$.
		Thus, from Observation~\ref{psi}\ref{psi2a}$, \psi(P)$ is given by,
		$$ \begin{aligned}
			\psi(P):~ &(\tau_{1,0})^{(i)}_{\alpha_0}, (\tau_{0,1})^{(i-1)}_{\alpha_0}, (\tau_{1,1})^{(i)}_{\alpha_0}, \dots , (\tau_{0,j_1})^{(i-1)}_{\alpha_0},(\tau_{1,j_1})^{(i)}_{\alpha_0},(\tau_{1,j_1})^{(i)}_{\alpha_1}, (\tau_{2,j_1+1})^{(i+1)}_{\alpha_1}, \\ &(\tau_{1,j_1+1})^{(i)}_{\alpha_1}, (\tau_{2j_1+2})^{(i+1)}_{\alpha_1}, \dots , (\tau_{t-1,j_t-1})^{(i+t-2)}_{\alpha_{t-1}}, (\tau_{t,j_t})^{(i+t-1)}_{\alpha_{t-1}}, (\tau_{t,j_t})^{(i+t-1)}_{\alpha_t},\\ &(\tau_{t+1,j_t+1})^{(i+t)}_{\alpha_t}, (\tau_{t,j_t+1})^{(i+t-1)}_{\alpha_t}, \dots, (\tau_{t+1,j_t+r})^{(i+t)}_{\alpha_t}, (\tau_{t,j_t+r})^{(i+t-1)}_{\alpha_t}.
		\end{aligned}  $$
		Now, the weight of $P$ is,
		$$\begin{aligned}
			w(P)= & \left(\prod_{k=0}^{j_1-1}(-1)\langle (\tau_{1,k}, \alpha_0), (\tau_{0, k+1}, \alpha_0)\rangle \langle (\tau_{1,k+1}, \alpha_0), (\tau_{0,k+1}, \alpha_0) \rangle \right) \langle(\tau_{1,j_1}, \alpha_0), (\tau_{1,j_1}, \alpha_1)\rangle \\ & \prod_{l=1}^{t-1}\left(\left( \prod_{k=j_l}^{j_{l+1}-2}(-1) \langle (\tau_{l+1,k+1},\alpha_l), (\tau_{l,k}, \alpha_l) \rangle \langle (\tau_{l+1,k+1}, \alpha_l), (\tau_{l,k+1}, \alpha_l) \rangle\right) \right. \\&\qquad\left.  (-1)\langle (\tau_{l+1,j_{l+1}}, \alpha_l), (\tau_{l,j_{l+1}-1}, \alpha_l) \rangle \langle (\tau_{l+1, j_{l+1}}, \alpha_l), (\tau_{l+1,j_{l+1}}, \alpha_{l+1}) \rangle \right) \\&\left( \prod_{k=j_t}^{j_t+r-1}(-1) \langle (\tau_{t+1,k+1}, \alpha_t), (\tau_{t,k}, \alpha_t) \rangle \langle (\tau_{t+1,k+1}, \alpha_t), (\tau_{t,k+1}, \alpha_t) \rangle\right) \\
		\end{aligned}$$
		$$\begin{aligned}
			=& \prod_{l=0}^{t-1}\langle(\tau_{l+1,j_{l+1}}, \alpha_{l}), (\tau_{l+1,j_{l+1}}, \alpha_{l+1})\rangle \left(\prod_{k=0}^{j_1-1}(-1) \langle (\tau_{1,k}, \alpha_0), (\tau_{0, k+1}, \alpha_0)\rangle \langle (\tau_{1,k+1}, \alpha_0), (\tau_{0,k+1}, \alpha_0) \rangle \right) \\ &\prod_{l=1}^{t-1}\left(\left( \prod_{k=j_l}^{j_{l+1}-2}(-1)\langle (\tau_{l+1,k+1},\alpha_l), (\tau_{l,k}, \alpha_l) \rangle \langle (\tau_{l+1,k+1}, \alpha_l), (\tau_{l,k+1}, \alpha_l) \rangle\right)\right. \\&\qquad\left.(-1)\langle (\tau_{l+1,j_{l+1}}, \alpha_l), (\tau_{l,j_{l+1}-1}, \alpha_l) \rangle\right) \left( \prod_{k=j_t}^{j_t+r-1}(-1) \langle (\tau_{t+1,k+1}, \alpha_t), (\tau_{t,k}, \alpha_t) \rangle \langle (\tau_{t+1,k+1}, \alpha_t), (\tau_{t,k+1}, \alpha_t) \rangle\right) \\
			&= \prod_{l=0}^{t-1}(-1)^{i+l}\langle \alpha_{l}, \alpha_{l+1}\rangle \left(\prod_{k=0}^{j_1-1}(-1) \langle \tau_{1,k}, \tau_{0, k+1}\rangle \langle \tau_{1,k+1},\tau_{0,k+1} \rangle \right)\\ &\prod_{l=1}^{t-1}\left(\left( \prod_{k=j_l}^{j_{l+1}-2}(-1) \langle \tau_{l+1,k+1}, \tau_{l,k} \rangle \langle \tau_{l+1,k+1}, \tau_{l,k+1} \rangle\right) (-1)\langle \tau_{l+1,j_{l+1}}, \tau_{l,j_{l+1}-1} \rangle \right)\\ &\left( \prod_{k=j_t}^{j_t+r-1}(-1) \langle \tau_{t+1,k+1}, \tau_{t,k} \rangle \langle \tau_{t+1,k+1}, \tau_{t,k+1} \rangle\right), \hspace{0.5em} \text{ (follows from \autoref{inc}),}\\
			&= (-1)^{\phi(i,t)} \prod_{l=0}^{t-1} \langle \alpha_{l-1}, \alpha_{l}\rangle \left(\prod_{k=0}^{j_1-1}(-1) \langle \tau_{1,k}, \tau_{0,k+1} \rangle \langle \tau_{1,k+1}, \tau_{0,k+1} \rangle \right)\\ &\prod_{l=1}^{t-1}\left(\left( \prod_{k=j_l}^{j_{l+1}-2}(-1) \langle \tau_{l+1,k+1}, \tau_{l,k} \rangle \langle \tau_{l+1,k+1}, \tau_{l,k+1} \rangle\right) (-1)\langle \tau_{l+1,j_{l+1}}, \tau_{l,j_{l+1}-1} \rangle \right)\\ &\left( \prod_{k=j_t}^{j_t+r-1}(-1) \langle \tau_{t+1,k+1}, \tau_{t,k} \rangle \langle \tau_{t+1,k+1}, \tau_{t,k+1} \rangle\right), \hspace{1em} \text{ where } \phi(i,t)= it + \frac{t(t-1)}{2}, \\&
			=w_G(\psi(P)) \hspace{2em} \text{ (follows from \autoref{gt}).}
		\end{aligned}$$
	
		Hence $w(P)= w_G(\psi(P))$ for each $P \in \Gba$.
	\end{proof}
	\begin{prop}\label{iso}
		The following diagram commutes.
		\[\begin{tikzcd}
			\cdots & {\CW_{q+1}(\wX)} && {\CW_{q}(\wX)} & \cdots \\
			\\
			\cdots & {\Lq_{q+1}(X)} && {\Lq_{q}(X)} & \cdots
			\arrow[from=1-1, to=1-2]
			\arrow["{\pW_{q+1}}", from=1-2, to=1-4]
			\arrow["{f_{q+1}}"', from=1-2, to=3-2]
			\arrow[from=1-4, to=1-5]
			\arrow["{f_{q}}"', from=1-4, to=3-4]
			\arrow[from=3-1, to=3-2]
			\arrow["{\pL_{q+1}}"', from=3-2, to=3-4]
			\arrow[from=3-4, to=3-5]
			\arrow[draw=none, from=1-2, to=3-4, "{\text{\Huge$\circlearrowleft$}}" description]
		\end{tikzcd}\]
		Here, $f_q$ is the isomorphism between $\CW_q(\wX)$ and $\Lq_q(X)$ (as asserted in \autoref{chiso}) for each $q \ge 0$. Hence, $(\Lq_{\#}, \pL_{\#})$ is a chain complex and $(\CW_{\#}, \pW_{\#}) \cong (\Lq_{\#}, \pL_{\#})$. Consequentially, $\h^{\W}_{\#}(\wX) \cong H^{\Lq}_{\#}(X)$.
	\end{prop}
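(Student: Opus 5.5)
To prove that the diagram commutes, I will check the identity $f_q \circ \pW_{q+1} = \pL_{q+1} \circ f_{q+1}$ on generators only. Both sides are $\Z$-linear, and $\CrWq$ is a basis of $\CW_{q+1}(\wX)$.

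Fix $(\tau,\beta)=(\tau^{(i)},\beta^{(q+1-i)}) \in \CrWq$. By the formula for $\pW_{q+1}$ obtained after \autoref{traj}, its image is
\[
f_q\bigl(\pW_{q+1}((\tau,\beta))\bigr)=\sum_{t=0}^{q+1-i}\sum_{(\sigma_t,\alpha_t)\in\CrW}\Bigl(\sum_{P\in\Gamma((\tau,\beta),(\sigma_t,\alpha_t))}w(P)\Bigr)(\sigma_t)_{\alpha_t},
\]
using $f_q((\sigma_t,\alpha_t))=(\sigma_t)_{\alpha_t}$ from \autoref{chiso}. On the other side, $\pL_{q+1}(f_{q+1}((\tau,\beta)))=\pL_{q+1}(\tau_\beta)$ is by definition a sum over $t$ and over $\sigma_{\alpha_t}\in L_q(X)$ of $\sum_{P\in\GT(\tau_\beta,\sigma_{\alpha_t})}w_G(P)$.

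The two outer index sets agree. By \autoref{chiso}, $f_q$ restricts to a bijection $\CrW\to L_q(X)$ preserving the pair (simplex dimension $i+t-1$, nerve dimension $q+1-i-t$). So it only remains to match the inner coefficients. By \autoref{bij}, $\psi:\Gba\to\Tba$ is a bijection. By \autoref{weight}, $w(P)=w_G(\psi(P))$. Reindexing the inner sum through $\psi$ therefore gives
\[
\sum_{P\in\Gba}w(P)=\sum_{P'\in\Tba}w_G(P'),
\]
which is exactly the required equality of coefficients.

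The one detail to watch is the term $t=0$ with $i=0$, where a simplex of dimension $-1$ would appear. In this case there is no critical target and no generalised trajectory, so both sides contribute zero. Likewise, pure trajectories correspond to $t=0$ and mixed trajectories to $t\ge1$, which matches the two kinds in \autoref{gt}. With these checked, the diagram commutes.

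The remaining conclusions are formal. Since each $f_q$ is an isomorphism and the diagram commutes, we have $\pL_{q+1}=f_q\circ\pW_{q+1}\circ f_{q+1}^{-1}$. Hence
\[
\pL_q\circ\pL_{q+1}=f_{q-1}\circ\pW_q\circ\pW_{q+1}\circ f_{q+1}^{-1}=0,
\]
because $(\CW_{\#},\pW_{\#})$ is a chain complex, being a simplotopal Thom--Smale complex. Thus $(\Lq_{\#},\pL_{\#})$ is a chain complex, $f_{\#}$ is a chain isomorphism, and the homologies are isomorphic.

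The main obstacle is bookkeeping rather than any new idea: one must make sure the coefficient sums are indexed over the same sets on both sides. The substantive content is already in \autoref{bij} and \autoref{weight}.
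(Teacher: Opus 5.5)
Your proposal is correct and follows the paper's proof. You check $f_q\circ\pW_{q+1}=\pL_{q+1}\circ f_{q+1}$ on generators by passing through \autoref{chiso}, reindexing via the bijection $\psi$ of \autoref{bij}, and matching weights by \autoref{weight}; your explicit conjugation $\pL_{q+1}=f_q\circ\pW_{q+1}\circ f_{q+1}^{-1}$ just spells out the chain-complex conclusion that the paper states in one line.
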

	\begin{proof}
		We only need to show that $f_q \circ \pW_{q+1} = \pL_{q+1} \circ f_{q+1}$ for each $q \ge 0$. It suffices to show that this holds on the set of generators.
		Let $\tau'= (\tau^{(i)}, \beta) \in \CrWq$ for a fixed $i \in \{0, \dots ,(q+1)\}$. For each $t \in \{0, \dots ,(q+1-i)\}$, let $(\sigma_t, \alpha_t)= (\sigma^{(i+t-1)}, \alpha^{(q+1-i-t)}) \in \CrW$. Therefore,
		\begin{equation*}
			\begin{aligned}
				f_q(\pW_{q+1}((\tau, \beta))) &= f_q\left(\sum_{t=0}^{q+1-i} \sum_{(\sigma_t, \alpha_t) \in \CrW}\left(\sum_{P \in \Gamma(\tau', (\sigma_t, \alpha_t))}w(P)\right) (\sigma_t, \alpha_t)\right) \\
				&= \sum_{t=0}^{q+1-i} \sum_{(\sigma_t, \alpha_t) \in \CrW}\left(\sum_{P \in \Gamma(\tau', (\sigma_t, \alpha_t))}w(P)\right) (\sigma_t)_{\alpha_t},\text{ (follows from \autoref{chiso}),} \\
				&= \sum_{t=0}^{q+1-i} \sum_{(\sigma_t, \alpha_t) \in \CrW}\left(\sum_{\psi(P) \in \GT(\tau_\beta, (\sigma_t)_{\alpha_t})}w(P)\right) (\sigma_t)_{\alpha_t}, \text{ (follows from \autoref{bij})} \\
				&= \sum_{t=0}^{q+1-i} \sum_{(\sigma_t, \alpha_t) \in \CrW}\left(\sum_{\psi(P) \in \GT(\tau_\beta, (\sigma_t)_{\alpha_t})}w_G(\psi(P))\right) (\sigma_t)_{\alpha_t}, \\ & \text{ (follows from \autoref{weight})} \\
				&= \sum_{t=0}^{q+1-i} \sum_{(\sigma_t)_{\alpha_t} \in L_{q+1}(X)}\left(\sum_{\psi(P) \in \GT(\tau_\beta, (\sigma_t)_{\alpha_t})}w_G(\psi(P))\right) (\sigma_t)_{\alpha_t}, \\ & \text{(follows from \autoref{chiso})}\\
				&= \pL_{q+1}(\tau_{\beta}) \\
				&= \pL_{q+1}(f_{q+1}((\tau, \beta))).
			\end{aligned}
		\end{equation*}
		Thus, $(\Lq_{\#}, \pL_{\#})$ is a chain complex and $f_{\#}$ is a chain isomorphism between $(\CW_{\#}, \pW_{\#})$ and $(\Lq_{\#}, \pL_{\#})$. Hence, the result follows.

	\end{proof}
	Therefore, we obtain the following isomorphisms,
	$$ H_{\#}(X) \cong \h_{\#}(\wX) \cong \h^{\W}_{\#}(\wX) \cong H^{\Lq}_{\#}(X),$$
	where the first isomorphism  follows from \autoref{first}, the second isomorphism follows from \autoref{forman} while the third follows from \autoref{iso}. This establishes our main result (\autoref{main}).
	
	\section{An application of the combinatorial nerve theorem}\label{app}
	In this section, we prove a version of the usual homological nerve theorem using our combinatorial nerve theorem (\autoref{main}). Suppose, $A_1, \dots, A_k$ are subcomplexes of $X$ such that $\bigcup_{i=1}^{k}A_i=X$ and $A_{\alpha}$ is collapsible for each $\alpha \in \N$. Then, this theorem states that the homology groups of $X$ are isomorphic to the respective homology groups of $\N$. Before delving into this theorem, we first state a couple results regarding the properties of the generalised trajectories. The following observation follows immediately from the definition of generalised trajectories (\autoref{gt}, Section~\ref{intro}).
	\begin{obs}{\label{gt1}}
		Let $P$ be a generalised trajectory initiating from a simplex $\tau_{\beta}^{(i)} \in L_{q+1}$, where $\beta \in \N$. Then any terminal simplex of $P$ must be of the form $\sigma_{\alpha}^{(j)} \in L_q$, $j \ge (i-1)$, $\alpha \subseteq \beta$. Conversely, let $\si^{(j)} \in L_q$ be a terminal simplex of $P$. Then, the following holds.
		\begin{enumerate}[label=(\alph*)]
			\item $j=i-1$, if and only if $\alpha=\beta$, that is, $P$ is a generalised trajectory of the first kind,\label{obs1}
			\item $j=i$, if and only if $\alpha <_f \beta$.\label{obs2}
		\end{enumerate}
	\end{obs}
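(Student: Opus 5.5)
The plan is to read everything off \autoref{gt} directly, tracking dimensions and the index sets $\alpha_l$ along a generalised trajectory. Let $P$ start at $\tau_\beta^{(i)} \in L_{q+1}$, so $\beta^{(q+1-i)} \in \N$. By \autoref{gt}, $P$ is of the first or the second kind, and I would treat these two cases separately.

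\textbf{First kind.} Every simplex of $P$ carries the same subscript $\beta$. The terminal simplex is $(\sigma_{k+1})^{(i-1)}_\beta$. Hence $\alpha=\beta \subseteq \beta$ and $j=i-1$.

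\textbf{Second kind.} Write $\beta=\alpha_0$. The definition gives a chain $\alpha_t <_f \alpha_{t-1} <_f \dots <_f \alpha_0$ with $1 \le t \le q+1-i$. The terminal simplex is $(\tau_{t,j_t+r})^{(i+t-1)}_{\alpha_t}$. So $\alpha=\alpha_t \subsetneq \beta$ with $\dim(\alpha)=\dim(\beta)-t$, and $j=i+t-1 \ge i$.

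Combining the two cases gives the general claims $j \ge i-1$ and $\alpha \subseteq \beta$. It also gives the useful identity $j-(i-1)=t=\dim(\beta)-\dim(\alpha)$, which holds in both cases, with $t=0$ for the first kind.

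\textbf{Part (a).} If $j=i-1$, then $P$ cannot be of the second kind, since that kind forces $j \ge i$. So $P$ is of the first kind and $\alpha=\beta$. Conversely, if $\alpha=\beta$, then $P$ cannot be of the second kind, since there $\alpha_t$ is a proper subset of $\beta$. So $P$ is of the first kind, and then $j=i-1$.

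\textbf{Part (b).} Use the identity $j-i+1=\dim(\beta)-\dim(\alpha)$. We have $j=i$ exactly when $t=1$, which happens exactly when $\dim(\alpha)=\dim(\beta)-1$. Since $\alpha \subseteq \beta$ is already known, this is equivalent to $\alpha <_f \beta$.

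\textbf{Obstacle.} The argument is routine bookkeeping. The only point needing care is justifying $\alpha_t \subsetneq \beta$ in the second kind. This follows by transitivity along the facet chain $\alpha_t <_f \dots <_f \alpha_0=\beta$ with $t \ge 1$: each step removes exactly one element, so the containment is proper and the dimension drops by exactly $t$. This strict drop is what separates the two kinds in part (a) and pins down $t=1$ in part (b).
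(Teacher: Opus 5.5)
Your proof is correct and follows the paper's approach. The paper only asserts that the observation follows immediately from the definition of generalised trajectories; your case split into the first and second kinds, together with the identity $j-(i-1)=t=\dim(\beta)-\dim(\alpha)$ read off from the facet chain $\alpha_t <_f \dots <_f \alpha_0=\beta$, supplies exactly the bookkeeping that assertion leaves implicit.
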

	
	\begin{lem}\label{0gt}
		Let $\sa^{(0)} \in L_{q+1}$, $\az^{(q+1)} \in \N$. Then, for each $\ao <_f \az$, there exists a unique generalised trajectory, $P$ from $\sa^{(0)}$ which ends at some $\sigma_{\ao}'^{(0)} \in L_q$. In other words, among the trajectories which begin at $\sa$, continue into $\bar{A}_{\ao}$ following $\Wo_{\ao}$ and terminate at a $\Wo_{\ao}$-critical simplex, there is only one (there might be other trajectories beginning from $\sa$ and continuing into $\Aa$ for $\alpha <_f \az$, $\alpha \ne \ao$, or trajectories going consecutively through $\bar{A}_{\ao}$, $\bar{A}_{\alpha_2}$, and so on and terminating at some $\Wo_{\alpha_t}$-critical simplex, $t >1$, where $\alpha_t <_f \dots <_f \alpha_2<_f \ao$). Further, $w_G(P)= \langle \az, \ao\rangle$.
	\end{lem}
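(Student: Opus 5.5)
The plan is to exploit the fact that $\sa$ is a vertex ($i=0$), which makes every generalised trajectory starting at it essentially deterministic. I would first pin down the shape of any such trajectory, then show that, for a fixed $\ao <_f \az$, exactly one walk realises it, and finally read off its weight from \autoref{gt}\ref{second}.

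\emph{Shape of the trajectory.} By Observation~\ref{gt1}, a generalised trajectory from $\sa^{(0)}$ ending at a $0$-dimensional $\sigma'_{\ao}$ with $\ao <_f \az$ is of the second kind with $i=0$ and $t=1$. Since $i=0$, the initial segment inside $\bar{A}_{\az}$ must be empty: there are no $(i-1)$-dimensional facets, so $j_1=0$. The trajectory therefore begins with the jump $\sa \Rightarrow \sigma_{\ao}$. This jump is legitimate because $\ao \subseteq \az$ gives $A_{\az}\subseteq A_{\ao}$, so $\sigma\in A_{\ao}$. After the jump, the trajectory alternates
\[
(\tau_{1,m})^{(0)}_{\ao} \rightarrowtail (\tau_{2,m+1})^{(1)}_{\ao} \rightarrow (\tau_{1,m+1})^{(0)}_{\ao},
\]
and ends at a $\Wo_{\ao}$-critical vertex.

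\emph{Existence and uniqueness.} I would argue that the walk is forced at every step. A vertex $v_{\ao}$ is either $\Wo_{\ao}$-critical or paired upward with a unique edge $e_{\ao}$, since a vertex has no facets to be paired downward. If $v_{\ao}$ is critical, the trajectory must stop there ($r=0$). In particular, it cannot continue, because the step $\rightarrowtail$ requires $v_{\ao}$ to be paired. If $v_{\ao}$ is paired with $e_{\ao}$, the next element must be $e_{\ao}$. The following vertex must be a facet of $e_{\ao}$ other than $v_{\ao}$, and an edge has exactly two vertices, so this vertex is forced as well. Thus at most one trajectory exists. For existence, the walk cannot revisit an edge, since that would produce a closed $\Wo_{\ao}$-trajectory, contradicting that $\Wo_{\ao}$ is a gradient vector field. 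As $\bar{A}_{\ao}$ is finite, the walk must terminate, and it can only terminate at an unpaired, i.e.\ critical, vertex. That vertex lies in $L_q$ because $\dim(\ao)=q$.

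\emph{Weight.} Here $\phi(0,1)=0$, and the jump contributes $\langle \az,\ao\rangle$. There are no factors from $\bar{A}_{\az}$, and no middle factors since $t=1$. Each step inside $\bar{A}_{\ao}$ contributes $(-1)\langle e, u\rangle\langle e, u'\rangle$, where $u, u'$ are the two distinct vertices of the edge $e$. For an edge $[u,u']$ these two incidence numbers are $+1$ and $-1$, so each such factor equals $1$. Hence $w_G(P)=\langle\az,\ao\rangle$.

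I expect the main obstacle to be handling the indexing of \autoref{gt}\ref{second} carefully. Specifically, I must verify that $j_1=0$ and $t=1$ are forced, and that the conditions (vi) on the non-paired arrows are automatically satisfied. Beyond that, the argument is a direct determinism check.
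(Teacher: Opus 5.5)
Your proposal is correct and follows essentially the same route as the paper. You reduce to a second-kind trajectory with $i=0$, $t=1$, $j_1=0$, show the walk through $\bar{A}_{\ao}$ is forced (each non-critical vertex has a unique paired edge, and each edge has exactly one other vertex), and note that every internal factor equals $(-1)(1)(-1)=1$, so $w_G(P)=\langle\az,\ao\rangle$. Your termination argument, which uses the acyclicity of $\Wo_{\ao}$, is actually a bit more careful than the paper's, which appeals only to finiteness and the existence of a critical vertex.
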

	
	\begin{proof}
		Since, $\sa$ is a $0$-simplex, therefore the terminal simplex of any generalised trajectory beginning from $\sigma_{\az}$ cannot be of lower dimension. Therefore, from \autoref{gt1}, the only possible generalised trajectory emanating from $\sa$ is of the second kind (as listed in \autoref{gt}). 	Let $\ao <_f \az$. So, a typical generalised trajectory initiating from $\sa$ with terminal simplex as $\sigma_{\ao}'$, for some $\sigma_{\ao}' \in L_q$, is of the form,
		$$ P:~\sa^{(0)}, ((\sigma_0)_{\ao}=)~\sigma_{\alpha_1}^{(0)}, (\tau_0)^{(1)}_{\ao}, (\sigma_1)^{(0)}_{\ao}, \dots , (\sigma_k)^{(0)}_{\ao}, (\tau_k)^{(1)}_{\ao}, (\sigma_{k+1})^{(0)}_{\ao}~(= \s). $$
		
		Now we show that given $\ao <_f \alpha$, this trajectory is unique. We observe that each $(\sigma_i)_{\alpha_1}$ is paired uniquely with $(\tau_i)_{\alpha_1}$ with respect to $\Wo_{\ao}$. Further, for each $i$, the facet $(\sigma_{i+1})_{\alpha_1}$ of $(\tau_i)_{\alpha_1}$, following $(\tau_i)_{\alpha_1}$ in the sequence, is unique, $(\tau_i)_{\alpha_1}$ being an $1$-simplex,. Since the subcomplexes are finite and there exists at least one $\Wo_{\ao}$-critical $0$-simplex, therefore the trajectory ends uniquely at some $\sigma_{\ao}'^{(0)} \in L_q$.
		
		Next, we compute the weight of this generalised trajectory.
		\begin{equation*}
			\begin{aligned}
				w_G(P) &= (-1)^{0}\langle \alpha_0, \alpha_1 \rangle\left(\prod_{i=0}^{k}(-1)\langle \tau_i, \sigma_i \rangle \langle  \tau_i, \sigma_{i+1} \rangle \right), \text{ (from \autoref{gt})}\\ &= (-1)^{0}\langle \alpha_0, \alpha_1 \rangle \left(\prod_{i=0}^{k} -(-1)(1)\right), \text{ (since, $\tau_i$ is a $1$-simplex for each $i$.)} \\
				&=\langle \alpha_0, \alpha_1\rangle.
			\end{aligned}
		\end{equation*}
	\end{proof}
	Let us denote this trajectory emanating from $\sa^{(0)} \in L_{q+1}$ ending at $\sigma_{\ao}'^{(0)} \in L_q$ as $\Pa$. We note here that as a direct consequence of this observation, once we fix $\sa^{(0)} \in L_{q+1}$, and $\ao <_f \az$, we obtain a unique generalised trajectory $\Pa$ and hence a unique terminal simplex corresponding to $\Pa$. We denote this unique simplex corresponding to $\sa$ and $\ao <_f \az$ as $\mathbf{t}(\sa,\ao)$.
	
	Now, we state the theorem.
	\begin{thm}
		Let $X$ be a simplicial complex with subcomplexes $A_1, \dots ,A_k$ such that $X= \bigcup_{i=1}^k A_i$. Let $\N$ be the nerve complex of $X$. Further, suppose that $A_{\alpha}$ is collapsible for each $\alpha \in \N$. Then, $$H_{\#}(X) \cong H_{\#}(\N).$$
	\end{thm}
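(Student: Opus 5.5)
Since each $A_\alpha$ is collapsible, I will choose for every $\alpha \in \N$ a gradient vector field $\oW$ on $\Aa$ with exactly one critical simplex, a vertex, which I denote $(v^\alpha)_\alpha$. Then $\Crit_i^{\oW}(\Aa)$ is empty for $i>0$ and equals $\{(v^\alpha)_\alpha\}$ for $i=0$. Hence
$$L_q(X)=\{(v^\alpha)_\alpha \mid \alpha \in \N,\ \dim(\alpha)=q\},$$
and the map $\theta_q:\Lq_q(X)\to C_q(\N)$, $(v^\alpha)_\alpha\mapsto \alpha$, is a group isomorphism for every $q$. By \autoref{main}, it then suffices to show that $\theta_\#$ is a chain map, i.e.\ that $\pL_{q+1}((v^{\az})_{\az})=\sum_{\ao<_f\az}\langle \az,\ao\rangle (v^{\ao})_{\ao}$ for every $\az^{(q+1)}\in\N$. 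This is exactly the simplicial boundary of $\az$ in $\N$.

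To compute $\pL_{q+1}$ on the generator $\sa=(v^{\az})_{\az}$, which has $i=0$, I will use \autoref{gt1}. Since every element of $L_q$ is a $0$-simplex, any terminal simplex $\sigma^{(j)}_\alpha$ of a generalised trajectory from $\sa$ has $j=0=i$. By \autoref{gt1}\ref{obs2} this forces $\alpha<_f\az$, so only second-kind trajectories with $t=1$ contribute. There are no first-kind trajectories, since $i=0$, and none with $t\ge 2$, since those end in dimension $i+t-1\ge 1$ where nothing is critical.

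For each $\ao<_f\az$, \autoref{0gt} gives exactly one such trajectory $\Pa$, with weight $\langle\az,\ao\rangle$. Its terminal simplex $\mathbf{t}(\sa,\ao)$ is a $\Wo_{\ao}$-critical $0$-simplex, hence necessarily equal to $(v^{\ao})_{\ao}$. Summing over $\ao$ gives the desired formula. Therefore $\theta_\#$ is a chain isomorphism, and
$$H_\#(X)\cong H^{\Lq}_\#(X)\cong H_\#(\N).$$

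The only delicate point is the bookkeeping that excludes all other trajectory types. This uses the dimension counts in \autoref{gt} together with the fact that no critical simplices exist in positive dimension. The uniqueness of the critical vertex is what identifies the endpoint. I expect no serious obstacle, since the sign computation is already contained in \autoref{0gt}.
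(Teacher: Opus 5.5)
Your proposal is correct and matches the paper's proof: you pick gradient vector fields with a single critical vertex on each $\Aa$, identify $\Lq_q(X)$ with $C_q(\N)$ via $\sigma_\alpha \mapsto \alpha$, and use \autoref{gt1} and \autoref{0gt} to show that $\pL_{q+1}$ agrees with the simplicial boundary of the nerve. You also spell out why first-kind trajectories and second-kind trajectories with $t \ge 2$ cannot occur, which the paper only leaves implicit.
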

	
	\begin{proof}
		Since, $A_\alpha$ is collapsible for each $\alpha \in \N$, therefore each $A_\alpha$ can be assigned a gradient vector field such that only one $0$-simplex is critical in $A_\alpha$ with respect to that gradient vector field. For each $\alpha \in \N$, let $\Aa$ be a copy of $A_\alpha$ as defined in Section~\ref{intro} and $\Wo_\alpha$ be a gradient vector field on $\Aa$ such that the only $\Wo_\alpha$-critical simplex in $\Aa$ is a $0$-simplex of $\Aa$.
		
		We show that, $H_{\#}^{\Lq}(X) \cong H_{\#}(\N)$. Then, it will follow from our theorem (\autoref{main}) that $H_{\#}(X) \cong H_{\#}^{\Lq}(X) \cong H_{\#}(\N)$. We note that, since here there are no $\oW$-critical $i$-simplices for $i \ge 1$, therefore, for each $q \ge 0$, the generators of $\Lq_q(X)$ are given by,
		
		$$ L_q(X)= \bigcup_{\alpha^{(q)} \in \N}\Crit_0^{\Wo_\alpha}(\Aa).$$
		
		We will show that the chain complexes $(C_{\#}(\N), \partial_{\#})$ and $\Lc$ are isomorphic. First, we construct an isomorphism $f_q$ between $C_q(\N)$ and $\Lq_q(X)$, for each $q \ge 0$. Then we will show that this is also a chain map, that is, the following diagram commutes, which will establish that $f_{\#}$ is a chain isomorphism.
		
		\[\begin{tikzcd}
			\cdots & {\Lq_{q+1}(X)} && {\Lq_{q}(X)} & \cdots \\
			\\
			\cdots & {C_{q+1}(\N)} && {C_{q}(\N)} & \cdots
			\arrow[from=1-1, to=1-2]
			\arrow["{\pL_{q+1}}", from=1-2, to=1-4]
			\arrow["{f_{q+1}}"', from=1-2, to=3-2]
			\arrow[from=1-4, to=1-5]
			\arrow["{f_{q}}"', from=1-4, to=3-4]
			\arrow[from=3-1, to=3-2]
			\arrow["{\partial_{q+1}}"', from=3-2, to=3-4]
			\arrow[from=3-4, to=3-5]
			\arrow[draw=none, from=1-2, to=3-4, "{\text{\Huge$\circlearrowleft$}}" description]
		\end{tikzcd}\]
		
		We define $f_q: \Lq_q(X) \rightarrow C_q(\N)$ on the set of generators. For $\alpha \in S_q(\N)$, let $\sigma_\alpha^{(0)} \in L_q(X)$, that is, $\Crit^{\Wo_\alpha}(\Aa) = \{\sigma_\alpha^{(0)}\}$. Now we define $f_q$ as,
		$$ \sigma_\alpha \longmapsto \alpha.$$
		Let us now define $g_q: C_q(\N) \rightarrow \Lq_q(X)$ as follows. Let $\alpha \in S_q(\N)$. Then we define $g_q$ as,
		$$ \alpha \longmapsto \sigma_\alpha. $$
		Both $f_q$ and $g_q$ are defined on the generators whence they are linearly extended to their domains. The well-definedness of $g_q$ follows from the fact that each $\Aa$, being collapsible, has a unique $\Wo_\alpha$-critical simplex. Again, following the same line of reasoning, it can be verified that $f_q \circ g_q = \id$ and $g_q \circ f_q = \id$ on the set of generators and hence on their respective domains.
		
		Now, we prove, $f_q \circ \pL_{q+1} = \partial_{q+1} \circ f_{q+1}.$
		
		From \autoref{gt1}, we note that for any $\sa^{(0)} \in L_{q+1}(X)$, all the generalised trajectories beginning from $\sa$ terminate at some $\sigma'^{(j)}_{\alpha}\in L_q(X)$, where $\alpha \subseteq \alpha_0$. Since, there are no $\oW$-critical $j$-simplices for $j \ge 1$, therefore $j =0$ and hence by \autoref{gt1}\ref{obs2}, $\alpha <_f \az$. Now, we can use \autoref{0gt} to conclude that the only possible generalised trajectories from $\sa$ are given by $\{P_{\sa, \alpha} \mid \alpha <_f \az\}$, where $w_G(P_{\sa, \alpha})= \langle \az, \alpha \rangle$ and the terminal simplex of $P_{\sa, \alpha}$, $\mathbf{t}(\sa, \alpha)$ is the unique $\Wo_{\alpha}$-critical simplex in $\bar{A}_{\alpha}$ (since $\bar{A}_{\alpha}$ is collapsible), for each $\alpha <_f \ao$. For each $\sa \in L_{q+1}$ and $\alpha <_f \az$, let us rename $\mathbf{t}(\sa, \alpha)$ as $\sigma'_\alpha$.
	
		Now, let $\sa^{(0)} \in L_{q+1}(X)$, where $\alpha_0^{(q+1)} \in \N$. Therefore, from the property of the generalised trajectories initiating from $\sa$, we can write,
		\begin{equation*}
			\begin{aligned}
				f_q(\pL_{q+1}(\sa)) &= f_q\left(\sum_{\alpha: \alpha <_f \az} w_G(P_{\sa, \alpha})\sigma_\alpha'\right)\\
				&= \sum_{\alpha: \alpha <_f \az}\langle \az, \alpha \rangle f_q(\sigma'_\alpha), \text{ (since, $w_G(P_{\sa, \alpha})= \langle \alpha_0, \alpha\rangle$)}\\
				&= \sum_{\alpha: \alpha <_f \az}\langle \az, \alpha \rangle\alpha \\& = \partial_{q+1}(\az)= \partial_{q+1}(f_{q+1}(\sa)).
			\end{aligned}
		\end{equation*}
		Hence, the result follows.
	\end{proof}
	We have so far used the gradient vector fields on the subcomplexes $A_\alpha$ to construct our chain complex $\Lc$. However, up until now, we have not at all exploited the structure of the nerve complex. Now, we use a gradient vector field on $\N$ to further reduce this chain complex.
	\section{Further reduction of the chain complex $\Lc$}\label{red}
	We recall from Subsection~\ref{alg} that a suitable acyclic pairing can be assigned to any chain complex to reduce it to a chain complex with isomorphic homology groups. We will use this to reduce our chain complex, $\Lc$, further. As in Subsection~\ref{alg}, this chain complex, $\Lc$ can also be represented as a ranked poset $(L, \le)$. Here, we observe that, for $y \in L_{q+1}$, $x \in L_q$,  $\langle y, x\rangle_L=\sum_{P \in \GT(y,x)}w_G(P)$ (coefficient of $x$ in $\pL_{q+1}(y)$), and hence, $x \le y$ if and only if, $\sum_{P \in \GT(y,x)}w_G(P) \ne 0$. For each $\alpha^{(q)} \in \N$, let $L_q^{\alpha}$ denote the set $\{\si \mid \si^{(0)} \in L_q \}$.
	
	Let $\VN$ be a given gradient vector field on $\N$. We now construct a pairing $\p$ on $\Lc$ as follows. Let $(\alpha^{(q)}, \beta^{(q+1)}) \in \VN$. We order the simplices of $L_{q+1}^{\beta}$ as $\tb, \dots ,(\tau_k)_{\beta}$. We choose $(\tau_1)_{\beta}$. From \autoref{0gt}, we conclude that, for each $\alpha <_f \beta$, there exists a unique generalised trajectory $P_{\tb, \alpha}$ from $\tb$ which ends at $\mathbf{t}(\tb, \alpha) \in L_q^\alpha$. Also from \autoref{0gt}, we know that $w_G(P_{\tb, \alpha})= \langle \beta, \alpha\rangle = \pm 1$. So, we pair $\tb$ with $\mathbf{t}(\tb, \alpha)$, that is, $(\mathbf{t}(\tb, \alpha), \tb)\in \p$. Thus $\p= \{(\mathbf{t}(\tb, \alpha), \tb) \mid (\alpha, \beta) \in \VN\}$. Since, $L^{\alpha}_{q} \cup L^{\beta}_{q+1}$ are mutually disjoint sets for each $(\alpha^{(q)}, \beta^{(q+1)}) \in \VN$, therefore $\z$ is well-defined.
	
	The next proposition characterises the $\z$-paths in $\Lc$.
	
	\begin{prop}\label{path}
		A typical $\p$-path in $\Lc$ initiating from some $\tau_0' \in L_{q+1}$ of the form,
		$$ \tau_0', \sigma_1', \tau_1',  \dots, \tau_k', \sigma_{k+1}',$$
		where $\sigma_i'= (\sigma_i)_{\alpha_i} \in L_q$, for each $i \in [k+1]$, $\tau_i'=(\tau_i)_{\beta_i} \in L_{q+1}$ for each $i \in \{0, \dots, k\}$, satisfies the following properties,
		\begin{enumerate}[label=(\alph*)]
			\item $\dim(\sigma_i')=\dim(\tau_i')=0$ for each $i \in [k]$.\label{prop0}
			\item  $(\alpha_i^{(q)}, \beta_i^{(q+1)}) \in \VN$ for each $i \in [k]$, \label{prop1}  
			\item $\alpha_i \ne \alpha_{i+1} <_f \beta_i$ for each $i \in [k-1]$, \label{prop2}
			\item For $\tau_0'\in L_{q+1}$, either $\dim(\tau_0')=0$, $\alpha_1 <_f \bz$, or $\dim(\tau_0')=1$, $\bz=\ao$,\label{prop3}
		\end{enumerate}	
	\end{prop}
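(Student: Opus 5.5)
The plan is to read everything off the definition of $\z$ and the definition of a $\z$-path (\ref{z}), using \autoref{gt1} to control which incidences $\langle y,x\rangle_L$ can be nonzero. First I would record the shape of the pairs. Every pair in $\z$ is $(\mathbf{t}(\tb,\alpha),\tb)$ with $(\alpha^{(q)},\beta^{(q+1)})\in\VN$. Here $\tb\in L^{\beta}_{q+1}$ is a $0$-simplex, and by \autoref{0gt} its partner $\mathbf{t}(\tb,\alpha)$ lies in $L^{\alpha}_q$, so it is also a $0$-simplex. Since $(\sigma_i',\tau_i')\in\z$ for each $i\in[k]$, this gives \ref{prop0} and \ref{prop1} at once: $\sigma_i'=(\sigma_i)_{\alpha_i}$ and $\tau_i'=(\tau_i)_{\beta_i}$ are $0$-simplices and $(\alpha_i,\beta_i)\in\VN$.

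For \ref{prop2}, take $i\in[k-1]$. In a path we have $\sigma_{i+1}'\le\tau_i'$, which means the $\Lq$-incidence is nonzero. In particular $\GT(\tau_i',\sigma_{i+1}')\neq\emptyset$.
\begin{itemize}
\item Since $\tau_i'$ has dimension $0$ and, by \ref{prop0}, $\sigma_{i+1}'$ also has dimension $0$ (as $i+1\le k$), \autoref{gt1}\ref{obs2} with $i=j=0$ forces $\alpha_{i+1}<_f\beta_i$.
\item If $\alpha_{i+1}=\alpha_i$, then $\sigma_{i+1}'$ and $\sigma_i'$ both lie in $L^{\alpha_i}_q$.
\item Uniqueness of the $\VN$-partner of $\alpha_i$, together with $\z$ being a matching, shows that $\sigma_{i+1}'$ is paired with an element of $L^{\beta_i}_{q+1}$. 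It is not paired with $\tau_i'$, because the path condition requires $(\sigma_{i+1}',\tau_i')\notin\z$.
\item The pair attached to $(\alpha_i,\beta_i)$ is the single pair $(\mathbf{t}(\tb,\alpha_i),\tb)$, with $\tb$ the chosen element of $L^{\beta_i}_{q+1}$. Since $\tau_i'$ is paired with $\sigma_i'$, we get $\tau_i'=\tb$ and $\sigma_i'=\mathbf{t}(\tb,\alpha_i)$.
\item Hence $\sigma_{i+1}'$ would have to coincide with $\sigma_i'$, so that $(\sigma_{i+1}',\tau_i')=(\sigma_i',\tau_i')\in\z$, a contradiction.
\end{itemize}
Thus $\alpha_i\ne\alpha_{i+1}$. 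The one fact I must state carefully is that each $(\alpha,\beta)\in\VN$ contributes exactly one pair.

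For \ref{prop3}, I use $\sigma_1'\le\tau_0'$. The element $\sigma_1'$ is a $0$-simplex (assuming $k\ge1$) lying in $L_q$ with $\alpha_1^{(q)}$. \autoref{gt1} says a generalised trajectory from $\tau_0'=(\tau_0)^{(i)}_{\beta_0}$ ends at dimension $j\ge i-1$, with $j=i-1$ iff $\alpha_1=\beta_0$ and $j=i$ iff $\alpha_1<_f\beta_0$. Since $j=0$, either $i=0$ and $\alpha_1<_f\beta_0$, or $i=1$ and $\beta_0=\alpha_1$.

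The main obstacle is the degenerate case $k=0$, where \ref{prop3} is not implied by the argument above, since $\sigma_1'$ is then the terminal element and need not be $0$-dimensional. I would treat the statement as concerning $k\ge1$. As a sanity check I would also confirm that $\beta_0$ having dimension $q+1-i$ matches $\alpha_1$ of dimension $q$ in both subcases.
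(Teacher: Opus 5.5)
Your proposal is correct and follows the paper's proof essentially step for step. Parts (a) and (b) come from the construction of $\z$. Part (c) uses \autoref{gt1}\ref{obs2} together with the fact that each pair in $\VN$ yields exactly one pair in $\z$. Part (d) applies \autoref{gt1} to $\sigma_1'\le\tau_0'$.

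Your caveat that (d) requires $k\ge 1$ is well taken. The paper's proof assumes this tacitly when it writes $\sigma_1'=(\sigma_1)^{(0)}_{\alpha_1}$, since that only follows from (a) when $k\ge 1$.
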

	\begin{proof}
		
		The first two conditions (\ref{prop0} and \ref{prop1}) follow from the construction of $\z$. For each $i \in [k-1]$, since $\sigma_{i+1}'=(\sigma_{i+1})^{(0)}_{\alpha_{i+1}} \le \tau_i'=(\tau_i)_{\beta_i}^{(0)}$, therefore, there must exist a generalised trajectory from $\tau_i'$ to $\sigma_{i+1}'$. Hence, using \autoref{gt1}\ref{obs2}, we infer that $\alpha_{i+1} <_f \beta_i$. We observe from the construction of $\z$ that each $(\alpha, \beta) \in \VN$ corresponds to a unique pair in $\z$. Therefore, if $\alpha_i=\alpha_{i+1}$ for some $i \in [k-1]$, then $\sigma_i'=\sigma_{i+1}'$, which is not possible in a $\z$-path. So $\alpha_i \ne \alpha_{i+1}$ for each $i \in [k-1]$. 
		
		Since $\sigma_1'=(\sigma_1)^{(0)}_{\alpha_1} \le \tau_0'$, therefore there exists a generalised trajectory from $\tau_0'$ to $\sigma_1'$. So, from \autoref{gt1}, we infer that $\dim(\sigma_1') \ge \dim(\tau_0')-1$, which means $\dim(\tau_0') \le 1$. Now, if $\dim(\tau_0')=0$, then from \autoref{gt1}\ref{obs2}, we have $\ao <_f \bz$. However if $\dim(\tau_0')=1$, then from \autoref{gt1}\ref{obs1}, we have $\bz=\ao$. 
	\end{proof}
	We obtain the following observation as a direct consequence of this proposition (\autoref{path}\ref{prop1} and \ref{prop2}).
	\begin{obs}\label{nerve}
		Given any $\z$-path $P$ in $\Lc$ of the form,  $$ P:~\tau_0', \sigma_1', \tau_1',  \dots, \tau_k', \sigma_{k+1}',$$
		where $\sigma_i'= (\sigma_i)_{\alpha_i} \in L_q$, for each $i \in [k+1]$, $\tau_i'=(\tau_i)_{\beta_i} \in L_{q+1}$ for each $i \in \{0, \dots, k\}$, there exists a corresponding \emph{nerve path} of the form,
		\begin{equation*}\label{nv}
			\alpha_1^{(q)}, \beta_1^{(q+1)}, \dots , \alpha_k^{(q)}, \beta_k^{(q+1)}, \tag{i}
		\end{equation*}
		where $(\alpha_i, \beta_i) \in \VN$, for each $i \in [k]$, $\alpha_i \ne \alpha_{i+1} <_f \beta_i$ for each $i \in [k-1]$.
	\end{obs}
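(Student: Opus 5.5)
The observation is read off directly from \autoref{path}, with one extra check that the nerve path is well defined. Take a $\z$-path $P:\tau_0', \sigma_1', \tau_1', \dots, \tau_k', \sigma_{k+1}'$ with $\sigma_i' = (\sigma_i)_{\alpha_i}$ and $\tau_i' = (\tau_i)_{\beta_i}$. I would associate to it the sequence $\alpha_1, \beta_1, \dots, \alpha_k, \beta_k$, where each term is the subscript recording which copy $\bar{A}_{\alpha}$ the corresponding element of $P$ lives in. This drops $\tau_0'$ and $\sigma_{k+1}'$, the only elements of $P$ not lying in a pair of $\z$.

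\textbf{Well-definedness.} For each $i \in [k]$ we have $(\sigma_i', \tau_i') \in \z$. By construction, every pair in $\z$ has the form $(\mathbf{t}(\tb,\alpha), \tb)$ with $(\alpha,\beta) \in \VN$, $\tb \in L^{\beta}_{q+1}$ and $\mathbf{t}(\tb,\alpha) \in L^{\alpha}_q$. So the subscripts of $\sigma_i'$ and $\tau_i'$ are exactly the entries of a pair in $\VN$. This is \autoref{path}\ref{prop1}, and \autoref{path}\ref{prop0} confirms that the simplices involved are $0$-dimensional, hence lie in $L^{\alpha_i}_q$ and $L^{\beta_i}_{q+1}$. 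The dimensions $\alpha_i^{(q)}$ and $\beta_i^{(q+1)}$ then follow from the definition of $L_q$: a $0$-simplex in $L_q$ sits over a $q$-simplex of $\N$.

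\textbf{Adjacency.} For $i \in [k-1]$, \autoref{path}\ref{prop2} gives $\alpha_i \ne \alpha_{i+1} <_f \beta_i$. Recall how that was derived, since it is the only substantive step: $\sigma_{i+1}' \le \tau_i'$ forces a generalised trajectory from the $0$-simplex $\tau_i'$ to the $0$-simplex $\sigma_{i+1}'$, and \autoref{gt1}\ref{obs2} then yields $\alpha_{i+1} <_f \beta_i$. Injectivity of the assignment $(\alpha,\beta) \mapsto$ pair in $\z$ then excludes $\alpha_i = \alpha_{i+1}$, since otherwise $\sigma_i' = \sigma_{i+1}'$.

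Putting these together gives exactly the sequence (\ref{nv}) with the stated properties. I expect no real obstacle, since everything is contained in \autoref{path}. The one point to state explicitly is that the endpoints $\tau_0'$ and $\sigma_{k+1}'$ are excluded, because by \autoref{path}\ref{prop3} they may be $1$-dimensional or lie over non-paired nerve simplices. This is also why (\ref{nv}) is, up to its endpoints, a $\VN$-type alternating sequence in $\N$, which is presumably how it will be used later to transfer acyclicity of $\VN$ to $\z$.
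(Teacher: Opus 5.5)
Your proposal is correct and follows the paper, which obtains the observation directly from \autoref{path}\ref{prop1} and \ref{prop2}; those two facts are themselves proved exactly as you recall, via \autoref{gt1}\ref{obs2} and the uniqueness of the $\z$-pair attached to each pair of $\VN$. One small slip in your framing, which does not affect the argument: $\tau_0'$ and $\sigma_{k+1}'$ need not be unpaired in $\z$, only not paired with their neighbours in $P$ (indeed $\tau_0'=\tau_r'$ is paired in the proof of \autoref{acyc}), and \autoref{path}\ref{prop3} concerns only $\tau_0'$, not $\sigma_{k+1}'$.
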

	For any $\z$-path $P$, we denote the corresponding nerve path as $P_{\mathcal{N}}$. The weight of this nerve path (refer to (\ref{nv})),
	$$ w(P_{\mathcal{N}}):= -\left(\prod_{i=1}^{k-1}(-1) \langle \beta_i, \alpha_i\rangle \langle \beta_i, \alpha_{i+1}\rangle\right)\langle\beta_k, \alpha_k\rangle.$$
	As in Subsection~\ref{alg}, we denote the set of all $\p$-paths from $\tau \in L_{q+1}$ and ending at $\sigma \in L_q$ as $\mathcal{P}(\tau, \sigma)$. Now, we show that $\p$ is an acyclic pairing on $\Lc$.
	\begin{prop}\label{acyc}
		$\p$ is an acyclic pairing.
	\end{prop}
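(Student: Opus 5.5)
To prove \autoref{acyc}, I argue by contradiction: a cyclic $\p$-path in $\Lc$ would project to a closed $\VN$-trajectory in $\N$, which is impossible because $\VN$ is a gradient vector field.

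\textbf{Step 1: set up a cyclic path.} Suppose $P:\tau_0', \sigma_1', \tau_1', \dots, \sigma_k', \tau_k', \sigma_{k+1}'$ is a cyclic $\p$-path, so $\tau_0'=\tau_r'$ for some $r>1$ (with $r\le k$). By \autoref{path}\ref{prop0}, every $\tau_i'$ with $i\in[k]$ is $0$-dimensional. Hence $\tau_0'=\tau_r'$ is also $0$-dimensional, and it lies in $L^{\beta_r}_{q+1}$ with $\beta_0=\beta_r$.

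\textbf{Step 2: pass to the nerve path.} By \autoref{nerve}, the subsequence $\alpha_1,\beta_1,\dots,\alpha_r,\beta_r$ satisfies three conditions:
\begin{enumerate}[label=(\roman*)]
\item $(\alpha_i,\beta_i)\in\VN$ for each $i\in[r]$;
\item $\alpha_{i+1}<_f\beta_i$ for each $i\in[r-1]$;
\item $\alpha_{i+1}\neq\alpha_i$ for each $i\in[r-1]$.
\end{enumerate}
Condition (iii) is exactly the requirement $(\alpha_{i+1},\beta_i)\notin\VN$, because $\beta_i$ is paired only with $\alpha_i$.

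Now use $\tau_0'=\tau_r'$. The simplex $\tau_0'$ is $0$-dimensional, so \autoref{path}\ref{prop3} gives $\alpha_1<_f\beta_0=\beta_r$. We also need $(\alpha_1,\beta_r)\notin\VN$. If instead $\alpha_1=\alpha_r$, then $\sigma_1'$ and $\sigma_r'$ would both be the unique element of $L_q^{\alpha_1}$ paired under $\p$. That forces $\sigma_1'=\sigma_r'$, hence $\tau_1'=\tau_r'=\tau_0'$. This contradicts the requirement $(\sigma_1',\tau_0')\notin\p$ in a $\p$-path.

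\textbf{Step 3: build the closed trajectory.} From Step 2, the sequence
$$\beta_r,\ \alpha_1,\ \beta_1,\ \alpha_2,\ \dots,\ \alpha_r,\ \beta_r$$
is a $\VN$-trajectory in $\N$: consecutive $\alpha$'s are paired with the following $\beta$'s, each $\alpha_{i+1}$ is a non-paired facet of $\beta_i$, and $\alpha_1$ is a non-paired facet of $\beta_r$. It starts and ends at $\beta_r$ and has length at least two, since $r>1$. So it is a closed $\VN$-trajectory, contradicting that $\VN$ is gradient.

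\textbf{Main obstacle.} The delicate point is the closing step: checking that $\alpha_1$ really is a non-paired facet of $\beta_r$, and that the closed trajectory is nondegenerate, meaning $r>1$ gives a genuine cycle rather than a repeated single pair. The rest follows directly from \autoref{path} and \autoref{nerve}. For the closing step I use the uniqueness of the $\p$-partner within each $L^\alpha_q\cup L^\beta_{q+1}$, as above.
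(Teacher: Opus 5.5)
Your proof is correct and follows the paper's argument. Both project a cycle $\tau_0'=\tau_r'$, via the nerve path of \autoref{nerve}, to a closed $\VN$-trajectory $\beta_0,\alpha_1,\beta_1,\dots,\alpha_r,\beta_r(=\beta_0)$, which contradicts that $\VN$ is gradient. In addition, you verify the closing step that the paper leaves implicit: $\alpha_1<_f\beta_0$ (from \autoref{path}\ref{prop3}) and $\alpha_1\neq\alpha_r$ (via uniqueness of the $\p$-pair over $L^{\alpha_1}_q$ together with $(\sigma_1',\tau_0')\notin\p$), which is exactly what is needed for $(\alpha_1,\beta_0)\notin\VN$.
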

	
	\begin{proof}
		Let $P$ be a $\p$-path in $\Lc$, initiating from $\tau_0' \in L_{q+1}$ as follows,
		$$ P:~\tau_0', \sigma_1', \tau_1',  \dots, \tau_k', \sigma_{k+1}',$$
		Now, if $\tau_0'=\tau_r'$ for some $r > 1$, then we use \autoref{nerve} to obtain a closed $\VN$-trajectory in $\N$, corresponding to $P$, as follows.
		$$\beta_0^{(q+1)}, \alpha_1^{(q)}, \beta_1^{(q+1)}, \dots , \alpha_r^{(q)}, \beta_r^{(q+1)}~(=\beta_0),$$
		where, $(\alpha_i, \beta_i) \in \VN$ for each $i \in [r]$, which is a contradiction. Hence, $\p$ is acyclic.
	\end{proof}
	
	Now, for each $q \ge 0$, the collection of all elements of $\CrqZ$, is given by,
	\begin{enumerate}[label=(\alph*)]
		\item $\{\sigma^{(i)}_{\alpha} \mid i \ge 1, \alpha^{(q-i)} \in \N\}$,
		\item $ \{(\tau_j)^{(0)}_{\beta} \in L_q^{\beta} \mid (\alpha^{(q-1)}, \beta^{(q)}) \in \VN, j \ne 1\} \cup \{\sigma^{(0)}_{\alpha} \in L_q^{\alpha} \mid (\alpha^{(q)}, \beta^{(q+1)}) \in \VN, \sigma^{(0)}_{\alpha} \ne \mathbf{t}((\tau_1)_{\beta}, \alpha), (\tau_1)_{\beta} \in L_{q+1}^{\beta} \}$ (i.e., the unpaired simplices in $L_q^\alpha$ for each $\alpha$ which appears in $\VN$),
		\item $\{\sigma^{(0)}_\alpha \mid \alpha \in \Crit^{\VN}_q(\N)\}$.
	\end{enumerate}
	
	Now, for each $q \ge 0$, $\Lq^{\z}_q$ is the free module generated by $\CrqZ$ over $\Z$. Let us rename $\CrqZ$ as $\Lz_q$.
	\begin{prop}
		Let $\tau_0' \in \Lz_{q+1}$, $\sigma_{k+1}' \in \Lz_q$ and let $P \in \mathcal{P}(\tau_0', \sigma_{k+1}')$ be as follows,
		$$ P:~\tau_0', \sigma_1', \tau_1',  \dots, \tau_k', \sigma_{k+1}',$$
		where $\sigma_i'= (\sigma_i)_{\alpha_i} \in L_q$, for each $i \in [k+1]$, $\tau_i'=(\tau_i)_{\beta_i} \in L_{q+1}$ for each $i \in \{0, \dots, k\}$. Let $P_{\mathcal{N}}$ be the corresponding nerve path. Then,
		$$w_L(P)= \left(\sum_{P\in \mathcal{P}(\tau_0', \sigma_1')}w_G(P)\right)\left(\sum_{P\in \mathcal{P}(\tau_k', \sigma_{k+1}')}w_G(P)\right)w(P_{\mathcal{N}}).$$
	\end{prop}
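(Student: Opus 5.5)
The plan is to expand $w_L(P)$ from its definition in Subsection~\ref{alg} and evaluate each incidence factor separately. For the poset $(L,\le)$, the incidence $\langle y,x\rangle_L$ is $\sum_{Q\in\GT(y,x)}w_G(Q)$. (The sums written in the statement over $\mathcal{P}(\tau_0',\sigma_1')$ and $\mathcal{P}(\tau_k',\sigma_{k+1}')$ should be read as these coefficients, i.e. sums over $\GT$.) So
\[
w_L(P)=\left(\prod_{i=0}^{k-1}(-1)\langle \tau_i',\sigma_{i+1}'\rangle_L\langle\tau_{i+1}',\sigma_{i+1}'\rangle_L\right)\langle\tau_k',\sigma_{k+1}'\rangle_L .
\]
I would then split the factors into three groups: the first incidence $\langle\tau_0',\sigma_1'\rangle_L$, the last incidence $\langle\tau_k',\sigma_{k+1}'\rangle_L$, and all the interior incidences. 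The two end incidences are kept as they are; they are exactly the two bracketed sums in the statement. The real work is to show that the interior incidences multiply out to $w(P_{\mathcal N})$.

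For the interior factors I use Proposition~\ref{path}. For $i\in[k]$, every $\tau_i'=(\tau_i)_{\beta_i}$ and every $\sigma_i'$ is a $0$-simplex, and $(\alpha_i,\beta_i)\in\VN$. By the construction of $\p$, $\sigma_i'=\mathbf{t}(\tau_i',\alpha_i)$.

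First consider the paired incidences $\langle\tau_i',\sigma_i'\rangle_L$ for $i\in[k]$. By Observation~\ref{gt1}, any generalised trajectory from the $0$-simplex $\tau_i'$ to a $0$-simplex of $\bar A_{\alpha_i}$ with $\alpha_i<_f\beta_i$ has $t=1$. Lemma~\ref{0gt} then says there is exactly one such trajectory, $P_{\tau_i',\alpha_i}$, and its weight is $\langle\beta_i,\alpha_i\rangle$. Hence $\langle\tau_i',\sigma_i'\rangle_L=\langle\beta_i,\alpha_i\rangle$.

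Next consider the cross incidences $\langle\tau_i',\sigma_{i+1}'\rangle_L$ for $i\in[k-1]$. Proposition~\ref{path}\ref{prop2} gives $\alpha_{i+1}<_f\beta_i$. By Lemma~\ref{0gt} there is exactly one generalised trajectory from $\tau_i'$ into $\bar A_{\alpha_{i+1}}$, and its weight is $\langle\beta_i,\alpha_{i+1}\rangle$. So the incidence equals $\langle\beta_i,\alpha_{i+1}\rangle$ if $\sigma_{i+1}'=\mathbf{t}(\tau_i',\alpha_{i+1})$, and $0$ otherwise. Since $P$ is a path, $\sigma_{i+1}'\le\tau_i'$ forces the incidence to be nonzero, so it equals $\langle\beta_i,\alpha_{i+1}\rangle$.

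Substituting these values and regrouping the product gives
\[
w_L(P)=\langle\tau_0',\sigma_1'\rangle_L\,\langle\tau_k',\sigma_{k+1}'\rangle_L\,(-1)\left(\prod_{i=1}^{k-1}(-1)\langle\beta_i,\alpha_i\rangle\langle\beta_i,\alpha_{i+1}\rangle\right)\langle\beta_k,\alpha_k\rangle .
\]
The remaining sign bookkeeping is as follows. The $i=0$ factor of the product contributes $(-1)\langle\tau_0',\sigma_1'\rangle_L\langle\beta_1,\alpha_1\rangle$. Each $i\ge1$ factor contributes $(-1)\langle\beta_i,\alpha_{i+1}\rangle\langle\beta_{i+1},\alpha_{i+1}\rangle$. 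Re-indexing the paired incidences $\langle\beta_j,\alpha_j\rangle$ across these factors gives exactly the displayed expression, with the leading $-1$ coming from the $i=0$ factor. That expression is the stated product of the two end incidences with $w(P_{\mathcal N})$.

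The step I expect to be the main obstacle is justifying the cross incidences $\langle\tau_i',\sigma_{i+1}'\rangle_L=\pm1$. One must check that no second-kind trajectory with $t\ge2$, and no first-kind trajectory, lands on the $0$-simplex $\sigma_{i+1}'$. This follows from the dimension count in Observation~\ref{gt1}: landing in $\bar A_{\alpha}$ with $t\ge 2$ forces terminal dimension at least $1$. I would also double-check the $k=0$ degenerate case, where the formula should be read with $w(P_{\mathcal N})$ and the two end factors coinciding, or be excluded.
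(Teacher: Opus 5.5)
Your proposal is correct and takes essentially the same approach as the paper. You expand $w_L(P)$, then use Proposition~\ref{path} and Lemma~\ref{0gt} to show that every interior incidence satisfies $\langle\tau_i',\sigma_i'\rangle_L=\langle\beta_i,\alpha_i\rangle$ and $\langle\tau_i',\sigma_{i+1}'\rangle_L=\langle\beta_i,\alpha_{i+1}\rangle$, and you regroup the product to isolate the two end incidences and $w(P_{\mathcal N})$. Your extra remarks only make explicit what the paper's proof leaves implicit: the end sums should be read as sums over $\GT$, the dimension count rules out other trajectory types, and the identity is only meaningful for $k\ge1$.
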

	\begin{proof}
		Since $(\sigma_i', \tau_i') \in \z$ for each $i \in [k]$, therefore from the construction of $\z$, there exists a unique generalised trajectory $P_{\tau_i', \alpha_i}$ from $\tau_i'$ to $\sigma_i'$, and further, $(\alpha_i, \beta_i) \in \VN$. Also, we know from \autoref{path}\ref{prop0}, that for each $i \in [k]$, $\dim(\sigma_i')=\dim(\tau_i')=0$. So, from \autoref{0gt}, we have, $w_G(P_{\tau_i', \alpha_i})= \langle \beta_i, \alpha_i\rangle$ for each $i \in [k]$. Again, from \autoref{path}\ref{prop0}, we know that $\dim(\tau_{i}')= \dim(\sigma_{i+1}')=0$, and from \autoref{path}\ref{prop2}, we have $\alpha_{i+1} <_f \beta_i$, for each $i \in [k-1]$. Since, $\sigma_{i+1}' \le \tau_i'$ for each $i \in [k-1]$, therefore, there exists a generalised trajectory from $\tau_i'$ to $\sigma_{i+1}'$. Now, we use \autoref{0gt} to deduce that there exists a unique generalised trajectory $P_{\tau_{i}', \alpha_{i+1}}$ from $\tau_i'$ to $\sigma_{i+1}'$ and $w_G(P_{\tau_i', \alpha_{i+1}})= \langle \beta_i, \alpha_{i+1} \rangle$. Thus,
		
		\begin{equation*}
			\begin{aligned}
				w_L(P) &= \left(\prod_{i=0}^{k-1}(-1) \langle \tau_i', \sigma_{i+1}'\rangle_L\langle \tau_{i+1}', \sigma_{i+1}'\rangle_L \right)\langle \tau_{k}', \sigma_{k+1}'\rangle_L\\	
				&=\langle \tau_0', \sigma_1'\rangle_L \left(\prod_{i=1}^{k-1}(-1)\langle \tau_i', \sigma_i'\rangle_L \langle \tau_{i}', \sigma_{i+1}' \rangle_L\right)(-1)\langle\tau_k', \sigma_{k}'\rangle_L \langle \tau_k', \sigma_{k+1}'\rangle_L\\
				&= \left(\sum_{P\in \mathcal{P}(\tau_0', \sigma_1')}w_G(P)\right)\left(\prod_{i=1}^{k-1}(-1) \langle \beta_i, \alpha_i\rangle \langle \beta_i, \alpha_{i+1}\rangle\right)(-1)\langle\beta_k, \alpha_k\rangle\left(\sum_{P\in \mathcal{P}(\tau_k', \sigma_{k+1}')}w_G(P)\right)\\
				&=\left(\sum_{P\in \mathcal{P}(\tau_0', \sigma_1')}w_G(P)\right)\left(\sum_{P\in \mathcal{P}(\tau_k', \sigma_{k+1}')}w_G(P)\right)w(P_{\mathcal{N}}).
			\end{aligned}
		\end{equation*}
	\end{proof}
	
	Now, from Subsection~\ref{alg}, the boundary map $\partial^{\z}_{q+1}: \Lq^{\z}_{q+1}(X) \rightarrow \Lq^{\z}_q(X)$ is defined for each $q \ge 0$, as follows. For each $\tau \in L^{\z}_{q+1}(X)$,
	
	$$ \partial^{\z}_{q+1}(\tau):= \sum_{\sigma \in L^{\z}_q(X)}\left(\sum_{P \in \mathcal{P}(\tau, \sigma)}w_L(P)\right)\sigma. $$
	
	The following result is due to \autoref{koz}.
	
	\begin{thm}\label{rt}
		$(\Lq^{\z}_{\#}(X), \partial^{\z}_{\#})$ is a chain complex, and $H^{\Lq^{\z}}_{\#}(X) \cong H^{\Lq}_{\#}(X)$.
	\end{thm}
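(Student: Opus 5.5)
The plan is to deduce \autoref{rt} directly from \autoref{koz}, applied to the free chain complex $\Lc$ with its basis $L_{\#}(X)$ and the pairing $\p$. Theorem~\ref{koz} needs three inputs. First, $\Lc$ must be a free chain complex over $\Z$ with a distinguished basis. Second, $\p$ must be a pairing in the sense of Subsection~\ref{alg}, that is, every pair $(x,y)\in\p$ must satisfy $\langle y,x\rangle_L=\pm1$. Third, $\p$ must be acyclic. Once these hold, Theorem~\ref{koz} says that the Morse complex built from the $\p$-critical elements and $\p$-paths is a chain complex with homology isomorphic to $H^{\Lq}_{\#}(X)$. That Morse complex is exactly $(\Lq^{\z}_{\#}(X),\partial^{\z}_{\#})$ as just defined.

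The first input is \autoref{iso}. It shows that $\Lc$ is a chain complex, and by construction $\Lq_q(X)=\Z\langle L_q(X)\rangle$. The incidence $\langle y,x\rangle_L$ is the coefficient $\sum_{P\in\GT(y,x)}w_G(P)$, as noted at the start of Section~\ref{red}.

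For the second input, take a pair $(\mathbf{t}(\tb,\alpha),\tb)\in\p$ with $(\alpha,\beta)\in\VN$. We must show that the \emph{total} coefficient of $\mathbf{t}(\tb,\alpha)$ in $\pL(\tb)$ is $\pm1$, not merely that one trajectory has weight $\pm1$. By \autoref{gt1}, any generalised trajectory from the $0$-simplex $\tb$ that ends at a $0$-simplex of $L_q$ ends in some $\bar A_{\alpha'}$ with $\alpha'<_f\beta$. Of these, only ones with $\alpha'=\alpha$ can end at $\mathbf{t}(\tb,\alpha)$, since the copies $\bar A_{\alpha'}$ are disjoint. \autoref{0gt} then gives exactly one such trajectory, $P_{\tb,\alpha}$, with weight $\langle\beta,\alpha\rangle=\pm1$. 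Hence $\langle\tb,\mathbf{t}(\tb,\alpha)\rangle_L=\pm1$, and in particular $\mathbf{t}(\tb,\alpha)\le\tb$ in the poset $(L,\le)$.

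I also need each element to appear in at most one pair. The sets $L_q^\alpha\cup L_{q+1}^\beta$ are disjoint for distinct pairs of $\VN$, because $\VN$ is itself a matching. Within a single pair $(\alpha,\beta)$, only $\tb$ and one terminal simplex are used.

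The third input is \autoref{acyc}, which establishes acyclicity through the nerve paths of \autoref{nerve}. The critical elements are exactly those listed as $\Lz_q$, and $\partial^{\z}$ is defined by the formula of Subsection~\ref{alg} with the weights $w_L$. Invoking Theorem~\ref{koz} therefore gives both claims of \autoref{rt}.

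The step I expect to need the most care is the coefficient check. One has to argue that no other generalised trajectory, of either kind, from $\tb$ lands on $\mathbf{t}(\tb,\alpha)$. A trajectory of the first kind is impossible because $\dim(\tb)=0$. A second-kind trajectory with $t\ge2$ ends at a simplex of dimension at least $1$, or in some $\bar A_{\alpha_t}$ with $\alpha_t\ne\alpha$. Both cases are ruled out by \autoref{gt1} together with the dimension count in \autoref{traj}.
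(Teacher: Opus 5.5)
Your proposal is correct and follows the paper's route. The paper obtains \autoref{rt} in one line from \autoref{koz} applied to $\Lc$ with the pairing $\z$, relying on \autoref{acyc} for acyclicity and on \autoref{0gt} for the $\pm1$ condition; your explicit checks that the \emph{total} coefficient $\langle \tb, \mathbf{t}(\tb,\alpha)\rangle_L$ is $\pm1$ and that each element lies in at most one pair fill in details the paper leaves implicit.
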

	
	\section{Computational aspects of the combinatorial nerve theorem}\label{algo}
	In this section, we provide an algorithm for the computation of homology groups of a simplicial complex using our theorems (\autoref{main} and \autoref{rt}) . Thereafter, we compute the homology groups of the $5 \times 5$ chessboard complex using our theorem, as an illustrative example.
	\subsection{An algorithm for computation of the homology groups of a simplicial complex using the combinatorial nerve theorem}
	Let $X$ be a simplicial complex and $A_1, A_2, \ldots A_k$ be subcomplexes of $X$ such that $X=\cup_{i=1}^{k}A_i$. We recall that, for each $\alpha \in \N$, $\Aa$ is a copy of $A_{\alpha}$. For the sake of convenience, in this subsection we denote an element $x_\alpha$ of $\Aa$ as $\ax$.
	
	We denote the directed Hasse diagram on a ranked poset ($P, \leq$) by $H_P$. Thus $H_P$ is a directed graph with vertex set $V(H_P)=P$, and for $x$ and $y$ in $V(H_P)$, there is a directed edge from $y$ to $x$ if and only if $x \leq y$ and $\operatorname{rank}(y)=\operatorname{rank}(x)+1$.
	
	For any $\ax \in \Aa$, we define $\operatorname{rank}$($\ax$)$ :=\dim(\alpha)+\dim(x)$.  
	Now, let $C:=\bigcup \limits_{\alpha \in \N} \Aa$ and let us consider the ranked poset ($C, \leq_1 $), where the partial order is given as follows.
	$$\ax \leq_1 \by \text{ if and only if }\alpha \subseteq \beta, x \subseteq y \text{ and }\operatorname{rank}(\by)=\operatorname{rank}(\ax)+1.$$
	
	\begin{algorithm}[H]\footnotesize
		\caption{Algorithm for computation of boundary matrices
		}\label{alg:boundary first}
		\begin{algorithmic}[1] 
			\Statex \textbf{Input:} \begin{itemize}
				\item simplicial complex $X$ without $\emptyset$,
				
				\item subcomplexes $\{A_{i}\}_{i=1}^{k}$ such that $X = \bigcup \limits_{i=1}^{k} A_{i}$, with $\Aa=\{[\alpha;x]\mid x \in A_{\alpha}\}$
	
			\end{itemize}
			\Statex \textbf{Output:} 
			for critical $\ax$ and $\by$ with $\operatorname{rank}(\by)=\operatorname{rank}(\ax)+1$
			\begin{itemize}
				\item 
				all generalised trajectories from $\by$ to $\ax$,
				\item  $a(\by,\ax)$ 
				\Comment{$a(\by, \ax)$ is the coefficient of $\ax$ in the boundary of $\by$
				}
			\end{itemize}

			\smallskip
		
			\State $C \gets \bigcup \limits_{\alpha \in \N} \Aa$
			
			\State for $\alpha \in \N,$ $\oW \gets \operatorname{GET}$(gradient vector field on $\Aa$)

		\State $N \gets \max\{\operatorname{rank}(\ax) \mid \ax \in C\}$

		\State $D \gets H_C$ with direction of edges reversed along each $\oW$

		\State $\ell_1 \gets $ a topological ordering  on $V(D)$
		\State for $i = 0, \dots ,N$, $C_{i} \gets$ set of all $\oW$-critical elements of $\operatorname{rank}(i)$
		in $C$ 
		
		\State for $i=1, \dots, N$, $\by \in C_i$, and $\ax \in C_{i-1}$, $a(\by,\ax) \gets 0$
		
	\end{algorithmic}
\end{algorithm}
\begin{algorithm}[H]\footnotesize
	\ContinuedFloat
	\caption{Algorithm for computation of boundary matrices
		(CONT.)}\label{alg:boundary 1st 2nd page}
	\begin{algorithmic}[1]
		\setcounter{ALG@line}{7}
		\For{$i=0, \dots ,(N-1)$, \textbf{and} each $[\alpha;x]\in C_i$}
		\State $m \gets 1$
		\State $r \gets 1$
		\State $j \gets 0$
		\State \texttt{CurrentSimplex} $\gets  x$
		\State $Q \gets $ an empty stack

		\State $\operatorname{PUSH} (Q,[\alpha;x])$  
		\State \texttt{Power} $\gets 0$

		\While{$j < \ell_1([\alpha;\texttt{CurrentSimplex}])$}
		\State $[\beta;y] \gets x \in V(D)$ such that  $\ell_1(x)=j$
		\If{there is an edge from $[\beta;y]$ to $[\alpha;\texttt{CurrentSimplex}]$}
		\If{$\operatorname{rank}([\alpha;\texttt{CurrentSimplex}])+\operatorname{rank}(\by)=2i+1$}
	
		\If{$\alpha \neq \beta$} 
		\State \texttt{AddPower} $\gets \dim(y)$
		\EndIf
	
		\State  $\operatorname{PUSH}(Q,[\beta;y])$

		\State \texttt{Power} $\gets $ \texttt{Power} + \texttt{AddPower}
	
		\State  $m \gets m \times$ incidence number between \texttt{CurrentSimplex} and $y$

		\State r $\gets  r \times$ incidence number between $\alpha$ and $\beta$
	
		\State \texttt{CurrentSimplex} $\gets y$
		
		\State $j \gets 0$
		\Else
		\State $j \gets j+1$
		\EndIf
		\Else
		\State $j \gets j+1$
		\EndIf
		
		\If{$j=\ell_1([\beta;\texttt{CurrentSimplex}])$}
		\State $z \gets$ \texttt{CurrentSimplex}
		\If{$[\beta;z] \in C_{i+1}$}
		\State $\operatorname{PRINT}(\text{reversed }  Q)$\Comment{this is a (maximal) generalised trajectory from $[\beta;z]$ to $[\alpha;x]$}
	
		\State $a([\beta;z],[\alpha;x]) \gets
		a([\beta;z],[\alpha;x])
		+ (-1)^{\left(\frac{|Q|}{2}-1\right)+\texttt{Power}}
		\times m \times r$

		\EndIf
		\State $j \gets j +1$
	
		\State $Q\gets\operatorname{POP}{(Q)}$
	
		\State $[\omega;\texttt{CurrentSimplex}] \gets \operatorname{\operatorname{TOP}(Q)}$
		
		\If {$\beta \neq \omega$} 
		\State \texttt{RemovePower} $\gets \dim(z)$
		\EndIf
		\State \texttt{Power} $\gets $ \texttt{Power} $-$ \texttt{RemovePower}
		\State $m \gets m \times$incidence number between $z$ and \texttt{CurrentSimplex}
		\State r $\gets $ r $\times$ incidence number between $\beta$ and $\omega$

		\EndIf

		\EndWhile
		\EndFor
		\State for $i=1, \dots ,N$, $\by \in C_i$, and $\ax \in C_{i-1}$, $\operatorname{PRINT}(a(\by,\ax)$)
	\end{algorithmic}
\end{algorithm}

We define $C':= \bigcup_{i=0}^NC_i$ (critical simplices obtained in Algorithm~\ref{alg:boundary first}). 
Next, we consider the ranked poset ($C', \leq_2$), where \[\ax \leq_2 \by \text{ if and only if }   a(\by, \ax) \neq 0.\]

\begin{algorithm}[H]\footnotesize
	\caption{Algorithm for computation of boundary matrices of the reduced chain complex
	}\label{alg reduction}
	\begin{algorithmic}[1] 
		\Statex \textbf{Input:} \begin{itemize}
			
			\item $C':= \bigcup_{i=0}^NC_i$ (from Algorithm~\ref{alg:boundary first})
			
			\item $a(\by,\ax)$, for $\ax, \by \in C'$ with $\operatorname{rank}(\by)=\operatorname{rank}(\ax)+1$ (the output of Algorithm~\ref{alg:boundary first}),
			\item topological order $\ell_1$ on $V(D)$ (in Algorithm~\ref{alg:boundary first}))
			\item gradient vector field $\VN$ on $\N$
			
		\end{itemize}
		\Statex \textbf{Output:} 
		for critical $\ax$ and $\by$ with $\operatorname{rank}(\by)=\operatorname{rank}(\ax)+1$
		\begin{itemize}
			\item all paths from $\by$ to $\ax$,
			\item $b(\by,\ax)$ 
			\Comment{$b(\by, \ax)$ is the coefficient of $\ax$ in the boundary of $\by$
			}
		\end{itemize}
		\State $N \gets \max\{\operatorname{rank}(\ax) \mid \ax \in C'\}$
		\State $\z \gets$ set of all $\{(\ax, \by) \mid \ax, \by \in C'\}$ such that
		\begin{enumerate}[label=(\roman*)]
			\item  $(\alpha, \beta) \in \VN$,
			\item  $\dim(y)=0$ and $\ell_1(\by) < \ell_1([\beta;y'])$, for each $[\beta;y'] \in C'$ with $\dim(y')=0$,
			\item  $\dim(x)=0$ such that $\ax \leq_2 \by$
		\end{enumerate}
		\State $D' \gets H_{C'}$ with direction of edges reversed along $\z$

		\State $\ell_2 \gets$ a topological ordering on $V(D')$
		
		\State for $i = 0, \dots ,N$, $C_{i}' \gets$ set of all elements of $\operatorname{rank}(i)$ in $C'$ that are unpaired (critical) with respect to $\z$
		
		\State for $i=1, \dots ,N$, $\by \in C_i'$, and $\ax \in C_{i-1}'$, $b(\by,\ax) \gets 0$
		
		\For{$i=0, \dots ,(N-1)$, and each $[\alpha;x]\in C_i'$}
		\State $m \gets 1$
		\State $j \gets 0$
		
		\State \texttt{CurrentSimplex} $\gets x$
		\State $Q \gets $ an empty stack

		\State $\operatorname{PUSH}(Q,[\alpha;x])$ 
		
		\While{$j < \ell_2([\alpha;\texttt{CurrentSimplex}])$}
		\State $[\beta;y] \gets x \in V(D')$ such that $\ell_2(x)=j$ 
		\If{there is an edge from $[\beta;y]$ to $[\alpha;\texttt{CurrentSimplex}]$}
		\If{$\operatorname{rank}([\alpha;\texttt{CurrentSimplex}])+\operatorname{rank}(\by)=2i+1$}
		
		\State  $\operatorname{PUSH}(Q, [\beta;y])$

		\State  $m \gets m \times a([\alpha;\texttt{CurrentSimplex}],[\beta;y])$
		\State \texttt{CurrentSimplex} $\gets y$
		
		\State $j \gets 0$
		\Else
		\State $j \gets j+1$
		\EndIf
		\Else
		\State $j \gets j+1$
		\EndIf
		
		\If{$j=\ell_2([\beta;\texttt{CurrentSimplex}])$}
		\State $z \gets \texttt{CurrentSimplex}$
		\If{$[\beta;z] \in C_{i+1}'$}
		\State $\operatorname{PRINT}$(reversed $Q$)\Comment{this is a (maximal) path from $[\beta;z]$ to $[\alpha;x]$}
		
		\State  $b(\by,\ax) \gets b(\by,\ax)+ (-1)^{\left(\frac{|Q|}{2}-1\right)}
		\times m$
		
		\EndIf
		\State $j \gets j +1$

		\State $Q \gets \operatorname{POP}{(Q)}$
	
		\State $[\omega;\texttt{CurrentSimplex}] \gets {\operatorname{TOP}(Q)}$

		\State $m \gets m \div a([\beta;z],[\omega;\texttt{CurrentSimplex}])$
		\EndIf

		\EndWhile
		\EndFor
		\State for $i=1, \dots ,N$, $\by \in C_i'$, and $\ax \in C_{i-1}'$, $\operatorname{PRINT}(b(\by,\ax))$ 
	\end{algorithmic}
\end{algorithm}

\subsection{Computation of the homology groups of the $5 \times 5$ chessboard complex using the combinatorial nerve theorem}\label{comp}

Now we compute the homology groups of the $5 \times 5$ chessboard complex (i.e., the matching complex of the complete bipartite graph $\K$) using our theorems (\autoref{main} and \autoref{rt}), as an illustrative example. We recall that the \emph{matching complex} of a graph $G$ is the collection of all matchings in $G$ which we denote as $\M(G)$. It is worth noting here that the homology groups of $\M(\K)$ have been computed previously using an ingenious but indirect technique, with the help of Bouc's long exact sequence as well as the homology groups of $\M(K_{10})$(see \cite{Shareshian2007}). However, here we offer a direct and algorithmic approach to compute these homology groups efficiently.

The edge set  of $\K$ is $[5] \times [5]$. Now in order to apply our theorem, first we choose the following subcomplexes of $\M(\K)$.
$$A_{i}=\{\sigma\mid \sigma \sqcup \{(1,i)\} \in \M(\K)\}, \text{ for each } i \in [5].$$
We observe that $\cup_{i=1}^5A_i= \M(\K)$. Next, we consider the nerve of $\M(\K)$, $\mathcal{N}(\M(\K))$. We observe that $A_\alpha \ne \emptyset$, for each $\alpha \subseteq [5]$, $\alpha\ne [5]$. Therefore, $\mathcal{N}(\M(\K))= \{\alpha \subseteq [5] \mid \alpha \ne [5]\}$.	Now, for each $\alpha \in \mathcal{N}(\M(\K))$, we construct a gradient vector field $\oW$ on $\Aa$, as follows.\\

\textbf{Case I:} Let $\dim(\alpha)=0$.\label{ex:case1}

Therefore, $\alpha= \{i\}$, $i \in [5]$. We pair $\si (\neq \emptyset)$ with $\si \sqcup \{(1,i)\}_{\alpha}$, (i.e., $(\si, \si \sqcup \{(1,i)\}_{\alpha})\in \oW$). We can verify that, for each $\alpha$, $\oW$ is well-defined and acyclic.  Hence, the collection of all $\oW$-critical simplices is given by, $\{\{(1,i)_\alpha\} \mid i \in [5], \alpha=\{i\}\}$. Therefore, $$\sum \limits_{\alpha^{(0)} \in \mathcal{N}(\M(\K))}|\Crit_0^{\oW}(\Aa)|=5.$$

\textbf{Case II:} Let $\dim(\alpha)=1$.\label{ex:case2}

We observe here that each $\Aa$ is a copy of $\M(K_{4,3})$(where the edge set of $K_{4,3}$ is $[4] \times [3]$) which differ by a labelling of vertices of $K_{4,3}$. So, it suffices to construct a gradient vector field $\vv$ on $\M(K_{4,3})$.

Now, we construct a gradient vector field $\vv$ on $\M(K_{4,3})$ sequentially in $10$ steps, as follows. Let $\U_i$ denote the collection of unpaired simplices after the $i^{th}$ step.\\
\textbf{Step I:}
\begin{align*}
	\text{Let }\vv^{(1,1)}:=\{(\sigma, \sigma \sqcup \{(1,1)\}) \mid  \sigma \in \M(\kk) \text{ and } \sigma \sqcup \{(1,1)\} \in \M(\kk)\}.
\end{align*}
We represent the pairings in $\vv^{(1,1)}$ in \autoref{matching 01 (11) K43} and \autoref{matching 12 (11) K43}. The vertices are numbered from top to bottom. We have labelled the vertices in the first pairing in \autoref{matching 01 (11) K43}  for a reader's convenience. However, for simplicity, we will not explicitly label the vertices of $K_{4,3}$ hereafter.

\begin{figure}[htbp]
	\begin{center}
		
		\begin{tikzpicture}[transform shape, scale = 0.5]
			\node[circle,fill,inner sep=3pt, label={[font=\huge]left:$4$}] (a) at (0,0) {};
			\node[circle,fill,inner sep=3pt, label={[font=\huge]left:$3$}] (b) at (0,1) {};
			\node[circle,fill,inner sep=3pt, label={[font=\huge]left:$2$}] (c) at (0,2) {};
			\node[circle,fill,inner sep=3pt,color=black, label={[font=\huge]left:$1$}] (d) at (0,3) {};

			\node[circle,fill,inner sep=3pt, label={[font=\huge]right:$3$}] (e) at (2,0) {};
			\node[circle,fill,inner sep=3pt, label={[font=\huge]right:$2$}] (f) at (2,1.5) {};
			\node[circle,fill,inner sep=3pt, label={[font=\huge]right:$1$}] (g) at (2,3) {};

			\draw[line width=2pt, color=black] (d) -- (g);
			\draw (c) -- (f);
		
			\node[circle,fill,inner sep=3pt] (a1) at (5,0) {};
			\node[circle,fill,inner sep=3pt] (b1) at (5,1) {};
			\node[circle,fill,inner sep=3pt] (c1) at (5,2) {};
			\node[circle,fill,inner sep=3pt] (d1) at (5,3) {};

			\node[circle,fill,inner sep=3pt] (e1) at (7,0) {};
			\node[circle,fill,inner sep=3pt] (f1) at (7,1.5) {};
			\node[circle,fill,inner sep=3pt] (g1) at (7,3) {};

			\draw[line width=2pt, color=black] (d1) -- (g1);
			\draw (c1) -- (e1);

			\node[circle,fill,inner sep=3pt] (a2) at (10,0) {};
			\node[circle,fill,inner sep=3pt] (b2) at (10,1) {};
			\node[circle,fill,inner sep=3pt] (c2) at (10,2) {};
			\node[circle,fill,inner sep=3pt] (d2) at (10,3) {};

			\node[circle,fill,inner sep=3pt ] (e2) at (12,0) {};
			\node[circle,fill,inner sep=3pt ] (f2) at (12,1.5) {};
			\node[circle,fill,inner sep=3pt,color=black] (g2) at (12,3) {};

			\draw[ line width=2pt, color=black] (d2) -- (g2);
			\draw (b2) -- (f2);

			\node[circle,fill,inner sep=3pt] (a3) at (15,0) {};
			\node[circle,fill,inner sep=3pt] (b3) at (15,1) {};
			\node[circle,fill,inner sep=3pt] (c3) at (15,2) {};
			\node[circle,fill,inner sep=3pt] (d3) at (15,3) {};

			\node[circle,fill,inner sep=3pt] (e3) at (17,0) {};
			\node[circle,fill,inner sep=3pt] (f3) at (17,1.5) {};
			\node[circle,fill,inner sep=3pt] (g3) at (17,3) {};

			\draw[ line width=2pt, color=black] (d3) -- (g3);
			\draw (b3) -- (e3);

			\node[circle,fill,inner sep=3pt] (a4) at (20,0) {};
			\node[circle,fill,inner sep=3pt] (b4) at (20,1) {};
			\node[circle,fill,inner sep=3pt] (c4) at (20,2) {};
			\node[circle,fill,inner sep=3pt] (d4) at (20,3) {};

			\node[circle,fill,inner sep=3pt] (e4) at (22,0) {};
			\node[circle,fill,inner sep=3pt] (f4) at (22,1.5) {};
			\node[circle,fill,inner sep=3pt] (g4) at (22,3) {};

			\draw[line width=2.2pt, color=black] (d4) -- (g4);
			\draw (a4) -- (f4);
			
			\node[circle,fill,inner sep=3pt ] (a5) at (25,0) {};
			\node[circle,fill,inner sep=3pt ] (b5) at (25,1) {};
			\node[circle,fill,inner sep=3pt ] (c5) at (25,2) {};
			\node[circle,fill,inner sep=3pt,color=black] (d5) at (25,3) {};

			\node[circle,fill,inner sep=3pt ] (e5) at (27,0) {};
			\node[circle,fill,inner sep=3pt ] (f5) at (27,1.5) {};
			\node[circle,fill,inner sep=3pt,color=black] (g5) at (27,3) {};

			\draw[ line width=2pt, color=black] (d5) -- (g5);
			\draw (a5) -- (e5);
	
			\node[circle,fill,inner sep=3pt, label={[font=\huge]left:$4$}  ] (a7) at (0,-6) {};
			\node[circle,fill,inner sep=3pt, label={[font=\huge]left:$3$} ] (b7) at (0,-5) {};
			\node[circle,fill,inner sep=3pt, label={[font=\huge]left:$2$}  ] (c7) at (0,-4) {};
			\node[circle,fill,inner sep=3pt, label={[font=\huge]left:$1$} ] (d7) at (0,-3) {};

			\node[circle,fill,inner sep=3pt, label={[font=\huge]right:$3$}] (e7) at (2,-6) {};
			\node[circle,fill,inner sep=3pt, label={[font=\huge]right:$2$} ] (f7) at (2,-4.5) {};
			\node[circle,fill,inner sep=3pt, label={[font=\huge]right:$1$}] (g7) at (2,-3) {};

			\draw (c7) -- (f7);
			\draw[ >->] (1,-2)--(1,-1);
	
			\node[circle,fill,inner sep=3pt ] (a8) at (5,-6) {};
			\node[circle,fill,inner sep=3pt ] (b8) at (5,-5) {};
			\node[circle,fill,inner sep=3pt ] (c8) at (5,-4) {};
			\node[circle,fill,inner sep=3pt ] (d8) at (5,-3) {};

			\node[circle,fill,inner sep=3pt ] (e8) at (7,-6) {};
			\node[circle,fill,inner sep=3pt ] (f8) at (7,-4.5) {};
			\node[circle,fill,inner sep=3pt ] (g8) at (7,-3) {};

			\draw (c8) -- (e8);
			\draw[ >->] (6,-2)--(6,-1);

			\node[circle,fill,inner sep=3pt ] (a9) at (10,-6) {};
			\node[circle,fill,inner sep=3pt ] (b9) at (10,-5) {};
			\node[circle,fill,inner sep=3pt ] (c9) at (10,-4) {};
			\node[circle,fill,inner sep=3pt ] (d9) at (10,-3) {};

			\node[circle,fill,inner sep=3pt ] (e9) at (12,-6) {};
			\node[circle,fill,inner sep=3pt ] (f9) at (12,-4.5) {};
			\node[circle,fill,inner sep=3pt ] (g9) at (12,-3) {};

			\draw (b9) -- (f9);
			
			\draw[ >->] (11,-2)--(11,-1);

			\node[circle,fill,inner sep=3pt ] (a10) at (15,-6) {};
			\node[circle,fill,inner sep=3pt ] (b10) at (15,-5) {};
			\node[circle,fill,inner sep=3pt ] (c10) at (15,-4) {};
			\node[circle,fill,inner sep=3pt ] (d10) at (15,-3) {};

			\node[circle,fill,inner sep=3pt ] (e10) at (17,-6) {};
			\node[circle,fill,inner sep=3pt ] (f10) at (17,-4.5) {};
			\node[circle,fill,inner sep=3pt ] (g10) at (17,-3) {};

			\draw (b10) -- (e10);
			
			\draw[ >->] (16,-2)--(16,-1);

			\node[circle,fill,inner sep=3pt ] (a11) at (20,-6) {};
			\node[circle,fill,inner sep=3pt ] (b11) at (20,-5) {};
			\node[circle,fill,inner sep=3pt ] (c11) at (20,-4) {};
			\node[circle,fill,inner sep=3pt ] (d11) at (20,-3) {};

			\node[circle,fill,inner sep=3pt ] (e11) at (22,-6) {};
			\node[circle,fill,inner sep=3pt ] (f11) at (22,-4.5) {};
			\node[circle,fill,inner sep=3pt ] (g11) at (22,-3) {};

			\draw (a11) -- (f11);
			
			\draw[ >->] (21,-2)--(21,-1);

			\node[circle,fill,inner sep=3pt ] (a12) at (25,-6) {};
			\node[circle,fill,inner sep=3pt ] (b12) at (25,-5) {};
			\node[circle,fill,inner sep=3pt ] (c12) at (25,-4) {};
			\node[circle,fill,inner sep=3pt ] (d12) at (25,-3) {};

			\node[circle,fill,inner sep=3pt ] (e12) at (27,-6) {};
			\node[circle,fill,inner sep=3pt ] (f12) at (27,-4.5) {};
			\node[circle,fill,inner sep=3pt ] (g12) at (27,-3) {};

			\draw (a12) -- (e12);
			
			\draw[ >->] (26,-2)--(26,-1);
		\end{tikzpicture}
	\end{center}
	\caption{Pairings between $0$-simplices and $1$-simplices of $\M(K_{4,3})$ with respect to $\vv^{(1,1)}$. The leftmost diagram represents the pair $(\{(2,2)\}, \{(1,1),(2,2)\})$.}\label{matching 01 (11) K43}
\end{figure}

\begin{figure}[htbp]
	\begin{center}
		
		\begin{tikzpicture}[transform shape, scale = 0.5]
			\node[circle,fill,inner sep=3pt ] (a) at (0,0) {};
			\node[circle,fill,inner sep=3pt ] (b) at (0,1) {};
			\node[circle,fill,inner sep=3pt ] (c) at (0,2) {};
			\node[circle,fill,inner sep=3pt,color=black] (d) at (0,3) {};

			\node[circle,fill,inner sep=3pt ] (e) at (2,0) {};
			\node[circle,fill,inner sep=3pt ] (f) at (2,1.5) {};
			\node[circle,fill,inner sep=3pt,color=black] (g) at (2,3) {};

			\draw[ line width=2pt, color=black] (d) -- (g);
			\draw (c) -- (f);
			\draw (b) -- (e);

			\node[circle,fill,inner sep=3pt ] (a1) at (5,0) {};
			\node[circle,fill,inner sep=3pt ] (b1) at (5,1) {};
			\node[circle,fill,inner sep=3pt ] (c1) at (5,2) {};
			\node[circle,fill,inner sep=3pt,color=black] (d1) at (5,3) {};

			\node[circle,fill,inner sep=3pt ] (e1) at (7,0) {};
			\node[circle,fill,inner sep=3pt ] (f1) at (7,1.5) {};
			\node[circle,fill,inner sep=3pt,color=black] (g1) at (7,3) {};

			\draw[ line width=2pt, color=black] (d1) -- (g1);
			\draw (c1) -- (f1);
			\draw (a1) -- (e1);

			\node[circle,fill,inner sep=3pt ] (a2) at (10,0) {};
			\node[circle,fill,inner sep=3pt ] (b2) at (10,1) {};
			\node[circle,fill,inner sep=3pt ] (c2) at (10,2) {};
			\node[circle,fill,inner sep=3pt,color=black] (d2) at (10,3) {};

			\node[circle,fill,inner sep=3pt ] (e2) at (12,0) {};
			\node[circle,fill,inner sep=3pt ] (f2) at (12,1.5) {};
			\node[circle,fill,inner sep=3pt,color=black] (g2) at (12,3) {};

			\draw[ line width=2pt, color=black] (d2) -- (g2);
			\draw (b2) -- (f2);
			\draw (c2) -- (e2);

			\node[circle,fill,inner sep=3pt ] (a3) at (15,0) {};
			\node[circle,fill,inner sep=3pt ] (b3) at (15,1) {};
			\node[circle,fill,inner sep=3pt ] (c3) at (15,2) {};
			\node[circle,fill,inner sep=3pt,color=black] (d3) at (15,3) {};

			\node[circle,fill,inner sep=3pt ] (e3) at (17,0) {};
			\node[circle,fill,inner sep=3pt ] (f3) at (17,1.5) {};
			\node[circle,fill,inner sep=3pt,color=black] (g3) at (17,3) {};

			\draw[ line width=2pt, color=black] (d3) -- (g3);
			
			\draw (a3) -- (f3);
			\draw (c3) -- (e3);

			\node[circle,fill,inner sep=3pt ] (a4) at (20,0) {};
			\node[circle,fill,inner sep=3pt ] (b4) at (20,1) {};
			\node[circle,fill,inner sep=3pt ] (c4) at (20,2) {};
			\node[circle,fill,inner sep=3pt,color=black] (d4) at (20,3) {};

			\node[circle,fill,inner sep=3pt ] (e4) at (22,0) {};
			\node[circle,fill,inner sep=3pt ] (f4) at (22,1.5) {};
			\node[circle,fill,inner sep=3pt,color=black] (g4) at (22,3) {};

			\draw[ line width=2pt, color=black] (d4) -- (g4);
			
			\draw (b4) -- (f4);
			\draw (a4) -- (e4);
			
			\node[circle,fill,inner sep=3pt ] (a5) at (25,0) {};
			\node[circle,fill,inner sep=3pt ] (b5) at (25,1) {};
			\node[circle,fill,inner sep=3pt ] (c5) at (25,2) {};
			\node[circle,fill,inner sep=3pt,color=black] (d5) at (25,3) {};

			\node[circle,fill,inner sep=3pt ] (e5) at (27,0) {};
			\node[circle,fill,inner sep=3pt ] (f5) at (27,1.5) {};
			\node[circle,fill,inner sep=3pt,color=black] (g5) at (27,3) {};

			\draw[ line width=2pt, color=black] (d5) -- (g5);
			\draw (a5) -- (f5);
			\draw (b5) -- (e5);

			\node[circle,fill,inner sep=3pt ] (a7) at (0,-6) {};
			\node[circle,fill,inner sep=3pt ] (b7) at (0,-5) {};
			\node[circle,fill,inner sep=3pt ] (c7) at (0,-4) {};
			\node[circle,fill,inner sep=3pt ] (d7) at (0,-3) {};

			\node[circle,fill,inner sep=3pt ] (e7) at (2,-6) {};
			\node[circle,fill,inner sep=3pt ] (f7) at (2,-4.5) {};
			\node[circle,fill,inner sep=3pt ] (g7) at (2,-3) {};

			\draw (c7) -- (f7);
			\draw (b7) -- (e7);

			\draw[ >->] (1,-2)--(1,-1);

			\node[circle,fill,inner sep=3pt ] (a8) at (5,-6) {};
			\node[circle,fill,inner sep=3pt ] (b8) at (5,-5) {};
			\node[circle,fill,inner sep=3pt ] (c8) at (5,-4) {};
			\node[circle,fill,inner sep=3pt ] (d8) at (5,-3) {};

			\node[circle,fill,inner sep=3pt ] (e8) at (7,-6) {};
			\node[circle,fill,inner sep=3pt ] (f8) at (7,-4.5) {};
			\node[circle,fill,inner sep=3pt ] (g8) at (7,-3) {};

			\draw (c8) -- (f8);
			\draw (a8) -- (e8);
			
			\draw[ >->] (6,-2)--(6,-1);
			
			\node[circle,fill,inner sep=3pt ] (a9) at (10,-6) {};
			\node[circle,fill,inner sep=3pt ] (b9) at (10,-5) {};
			\node[circle,fill,inner sep=3pt ] (c9) at (10,-4) {};
			\node[circle,fill,inner sep=3pt ] (d9) at (10,-3) {};

			\node[circle,fill,inner sep=3pt ] (e9) at (12,-6) {};
			\node[circle,fill,inner sep=3pt ] (f9) at (12,-4.5) {};
			\node[circle,fill,inner sep=3pt ] (g9) at (12,-3) {};

			\draw (b9) -- (f9);
			\draw (c9) -- (e9);
			\draw[ >->] (11,-2)--(11,-1);

			\node[circle,fill,inner sep=3pt ] (a10) at (15,-6) {};
			\node[circle,fill,inner sep=3pt ] (b10) at (15,-5) {};
			\node[circle,fill,inner sep=3pt ] (c10) at (15,-4) {};
			\node[circle,fill,inner sep=3pt ] (d10) at (15,-3) {};

			\node[circle,fill,inner sep=3pt ] (e10) at (17,-6) {};
			\node[circle,fill,inner sep=3pt ] (f10) at (17,-4.5) {};
			\node[circle,fill,inner sep=3pt ] (g10) at (17,-3) {};

			\draw (a10) -- (f10);
			\draw (c10) -- (e10);
			\draw[ >->] (16,-2)--(16,-1);

			\node[circle,fill,inner sep=3pt ] (a11) at (20,-6) {};
			\node[circle,fill,inner sep=3pt ] (b11) at (20,-5) {};
			\node[circle,fill,inner sep=3pt ] (c11) at (20,-4) {};
			\node[circle,fill,inner sep=3pt ] (d11) at (20,-3) {};

			\node[circle,fill,inner sep=3pt ] (e11) at (22,-6) {};
			\node[circle,fill,inner sep=3pt ] (f11) at (22,-4.5) {};
			\node[circle,fill,inner sep=3pt ] (g11) at (22,-3) {};

			\draw (b11) -- (f11);
			\draw (a11) -- (e11);
			
			\draw[ >->] (21,-2)--(21,-1);

			\node[circle,fill,inner sep=3pt ] (a12) at (25,-6) {};
			\node[circle,fill,inner sep=3pt ] (b12) at (25,-5) {};
			\node[circle,fill,inner sep=3pt ] (c12) at (25,-4) {};
			\node[circle,fill,inner sep=3pt ] (d12) at (25,-3) {};

			\node[circle,fill,inner sep=3pt ] (e12) at (27,-6) {};
			\node[circle,fill,inner sep=3pt ] (f12) at (27,-4.5) {};
			\node[circle,fill,inner sep=3pt ] (g12) at (27,-3) {};

			\draw (a12) -- (f12);
			\draw (b12) -- (e12);
			
			\draw[ >->] (26,-2)--(26,-1);
		\end{tikzpicture}
	\end{center}
	\caption{Pairings between $1$-simplices and $2$-simplices of $\M(\kk)$ with respect to $\vv^{(1,1)}$.}\label{matching 12 (11) K43}
\end{figure}
Therefore, $\U_{1}=\{\sigma \mid \sigma \text{ does not appear in }\vv^{(1,1)}\}$.

\textbf{Step II:}
\begin{align*}
	\text{Let }\vv^{(2,1)}:=&\{(\sigma, \sigma \sqcup \{(2,1)\}) \mid \sigma \sqcup \{(2,1)\} \in \M(\kk), \text{ and both }\sigma, \sigma \sqcup \{(2,1)\} \in \U_{1}\}.
\end{align*}
\autoref{matching 01 (21) K43} and \autoref{matching 12 (21) K43} illustrate all such pairings.
\begin{figure}[htbp]
	\begin{center}
		
		\begin{tikzpicture}[transform shape, scale = 0.5]
		
			\node[circle,fill,inner sep=3pt ] (a) at (0,0) {};
			\node[circle,fill,inner sep=3pt ] (b) at (0,1) {};
			\node[circle,fill,inner sep=3pt,color=black] (c) at (0,2) {};
			\node[circle,fill,inner sep=3pt ] (d) at (0,3) {};

			\node[circle,fill,inner sep=3pt ] (e) at (2,0) {};
			\node[circle,fill,inner sep=3pt ] (f) at (2,1.5) {};
			\node[circle,fill,inner sep=3pt,color=black] (g) at (2,3) {};

			\draw[ line width=2pt, color=black] (c) -- (g);
			\draw (d) -- (f);

			\node[circle,fill,inner sep=3pt ] (a1) at (5,0) {};
			\node[circle,fill,inner sep=3pt ] (b1) at (5,1) {};
			\node[circle,fill,inner sep=3pt,color=black] (c1) at (5,2) {};
			\node[circle,fill,inner sep=3pt ] (d1) at (5,3) {};

			\node[circle,fill,inner sep=3pt ] (e1) at (7,0) {};
			\node[circle,fill,inner sep=3pt ] (f1) at (7,1.5) {};
			\node[circle,fill,inner sep=3pt,color=black] (g1) at (7,3) {};
			
			\draw[ line width=2pt, color=black] (c1) -- (g1);
			\draw (d1) -- (e1);

			\node[circle,fill,inner sep=3pt ] (a3) at (0,-6) {};
			\node[circle,fill,inner sep=3pt ] (b3) at (0,-5) {};
			\node[circle,fill,inner sep=3pt ] (c3) at (0,-4) {};
			\node[circle,fill,inner sep=3pt ] (d3) at (0,-3) {};

			\node[circle,fill,inner sep=3pt ] (e3) at (2,-6) {};
			\node[circle,fill,inner sep=3pt ] (f3) at (2,-4.5) {};
			\node[circle,fill,inner sep=3pt ] (g3) at (2,-3) {};

			\draw (d3) -- (f3);
			
			\draw[ >->] (1,-2)--(1,-1);

			\node[circle,fill,inner sep=3pt ] (a4) at (5,-6) {};
			\node[circle,fill,inner sep=3pt ] (b4) at (5,-5) {};
			\node[circle,fill,inner sep=3pt ] (c4) at (5,-4) {};
			\node[circle,fill,inner sep=3pt ] (d4) at (5,-3) {};

			\node[circle,fill,inner sep=3pt ] (e4) at (7,-6) {};
			\node[circle,fill,inner sep=3pt ] (f4) at (7,-4.5) {};
			\node[circle,fill,inner sep=3pt ] (g4) at (7,-3) {};
			
			\draw (d4) -- (e4);
			\draw[ >->] (6,-2)--(6,-1);

		\end{tikzpicture}
	\end{center}
	\caption{Pairings between $0$-simplices and $1$-simplices of $\M(K_{4,3})$ with respect to $\vv^{(2,1)}$.}\label{matching 01 (21) K43}
\end{figure}
\begin{figure}[htbp]
	\begin{center}
		
		\begin{tikzpicture}[transform shape, scale = 0.5]
			\node[circle,fill,inner sep=3pt ] (a) at (0,0) {};
			\node[circle,fill,inner sep=3pt ] (b) at (0,1) {};
			\node[circle,fill,inner sep=3pt,color=black] (c) at (0,2) {};
			\node[circle,fill,inner sep=3pt ] (d) at (0,3) {};

			\node[circle,fill,inner sep=3pt ] (e) at (2,0) {};
			\node[circle,fill,inner sep=3pt ] (f) at (2,1.5) {};
			\node[circle,fill,inner sep=3pt,color=black] (g) at (2,3) {};

			\draw[ line width=2pt, color=black] (c) -- (g);
			\draw (d) -- (f);
			\draw (b) -- (e);

			\node[circle,fill,inner sep=3pt ] (a1) at (5,0) {};
			\node[circle,fill,inner sep=3pt ] (b1) at (5,1) {};
			\node[circle,fill,inner sep=3pt,color=black] (c1) at (5,2) {};
			\node[circle,fill,inner sep=3pt ] (d1) at (5,3) {};

			\node[circle,fill,inner sep=3pt ] (e1) at (7,0) {};
			\node[circle,fill,inner sep=3pt ] (f1) at (7,1.5) {};
			\node[circle,fill,inner sep=3pt,color=black] (g1) at (7,3) {};
			
			\draw[ line width=2pt, color=black] (c1) -- (g1);
			\draw (d1) -- (f1);
			\draw (a1) -- (e1);

			\node[circle,fill,inner sep=3pt ] (a2) at (10,0) {};
			\node[circle,fill,inner sep=3pt ] (b2) at (10,1) {};
			\node[circle,fill,inner sep=3pt,color=black] (c2) at (10,2) {};
			\node[circle,fill,inner sep=3pt ] (d2) at (10,3) {};

			\node[circle,fill,inner sep=3pt ] (e2) at (12,0) {};
			\node[circle,fill,inner sep=3pt ] (f2) at (12,1.5) {};
			\node[circle,fill,inner sep=3pt,color=black] (g2) at (12,3) {};
			
			\draw[ line width=2pt, color=black] (c2) -- (g2);
			\draw (b2) -- (f2);
			\draw (d2) -- (e2);

			\node[circle,fill,inner sep=3pt ] (a3) at (15,0) {};
			\node[circle,fill,inner sep=3pt ] (b3) at (15,1) {};
			\node[circle,fill,inner sep=3pt,color=black] (c3) at (15,2) {};
			\node[circle,fill,inner sep=3pt ] (d3) at (15,3) {};

			\node[circle,fill,inner sep=3pt ] (e3) at (17,0) {};
			\node[circle,fill,inner sep=3pt ] (f3) at (17,1.5) {};
			\node[circle,fill,inner sep=3pt,color=black] (g3) at (17,3) {};
			
			\draw[ line width=2pt, color=black] (c3) -- (g3);
			\draw (a3) -- (f3);
			\draw (d3) -- (e3);

			\node[circle,fill,inner sep=3pt ] (a5) at (0,-6) {};
			\node[circle,fill,inner sep=3pt ] (b5) at (0,-5) {};
			\node[circle,fill,inner sep=3pt ] (c5) at (0,-4) {};
			\node[circle,fill,inner sep=3pt ] (d5) at (0,-3) {};

			\node[circle,fill,inner sep=3pt ] (e5) at (2,-6) {};
			\node[circle,fill,inner sep=3pt ] (f5) at (2,-4.5) {};
			\node[circle,fill,inner sep=3pt ] (g5) at (2,-3) {};

			\draw (d5) -- (f5);
			\draw (b5) -- (e5);
			
			\draw[ >->] (1,-2)--(1,-1);

			\node[circle,fill,inner sep=3pt ] (a6) at (5,-6) {};
			\node[circle,fill,inner sep=3pt ] (b6) at (5,-5) {};
			\node[circle,fill,inner sep=3pt ] (c6) at (5,-4) {};
			\node[circle,fill,inner sep=3pt ] (d6) at (5,-3) {};

			\node[circle,fill,inner sep=3pt ] (e6) at (7,-6) {};
			\node[circle,fill,inner sep=3pt ] (f6) at (7,-4.5) {};
			\node[circle,fill,inner sep=3pt ] (g6) at (7,-3) {};
			
			\draw (d6) -- (f6);
			\draw (a6) -- (e6);
			
			\draw[ >->] (6,-2)--(6,-1);

			\node[circle,fill,inner sep=3pt ] (a7) at (10,-6) {};
			\node[circle,fill,inner sep=3pt ] (b7) at (10,-5) {};
			\node[circle,fill,inner sep=3pt ] (c7) at (10,-4) {};
			\node[circle,fill,inner sep=3pt ] (d7) at (10,-3) {};

			\node[circle,fill,inner sep=3pt ] (e7) at (12,-6) {};
			\node[circle,fill,inner sep=3pt ] (f7) at (12,-4.5) {};
			\node[circle,fill,inner sep=3pt ] (g7) at (12,-3) {};
			
			\draw (b7) -- (f7);
			\draw (d7) -- (e7);
			
			\draw[ >->] (11,-2)--(11,-1);

			\node[circle,fill,inner sep=3pt ] (a8) at (15,-6) {};
			\node[circle,fill,inner sep=3pt ] (b8) at (15,-5) {};
			\node[circle,fill,inner sep=3pt ] (c8) at (15,-4) {};
			\node[circle,fill,inner sep=3pt ] (d8) at (15,-3) {};

			\node[circle,fill,inner sep=3pt ] (e8) at (17,-6) {};
			\node[circle,fill,inner sep=3pt ] (f8) at (17,-4.5) {};
			\node[circle,fill,inner sep=3pt ] (g8) at (17,-3) {};
			
			\draw (a8) -- (f8);
			\draw (d8) -- (e8);
			\draw[ >->] (16,-2)--(16,-1);
		\end{tikzpicture}
	\end{center}
	\caption{All pairings among $1$-simplices and $2$-simplices of $\M(K_{4,3})$ with respect to $\vv^{(2,1)}$.}\label{matching 12 (21) K43}
\end{figure}

Therefore, $\U_{2}=\{\sigma \mid \sigma \text{ does not appear in  }\bigsqcup \limits_{i=1}^{2}\vv^{(i,1)}\}$.

\textbf{Step III:}
\begin{align*}
	\text {Let }\vv^{(3,1)}:=&\{(\sigma, \sigma \sqcup \{(3,1)\}) \mid \sigma \sqcup \{(3,1)\} \in \M(\kk) \text{ and both } \sigma, \sigma \sqcup (3,1) \in \U_{2}\}.
\end{align*}
\autoref{matching 12 (31) K43} illustrates all such pairings.
\begin{figure}[htbp]
	\begin{center}
		\begin{tikzpicture}[transform shape, scale = 0.5]
		
			\node[circle,fill,inner sep=3pt ] (a) at (0,0) {};
			\node[circle,fill,inner sep=3pt,color=black] (b) at (0,1) {};
			\node[circle,fill,inner sep=3pt ] (c) at (0,2) {};
			\node[circle,fill,inner sep=3pt ] (d) at (0,3) {};

			\node[circle,fill,inner sep=3pt ] (e) at (2,0) {};
			\node[circle,fill,inner sep=3pt ] (f) at (2,1.5) {};
			\node[circle,fill,inner sep=3pt,color=black] (g) at (2,3) {};

			\draw[ line width=2pt, color=black] (b) -- (g);
			\draw (d) -- (f);
			\draw (c) -- (e);

			\node[circle,fill,inner sep=3pt ] (a1) at (5,0) {};
			\node[circle,fill,inner sep=3pt,color=black] (b1) at (5,1) {};
			\node[circle,fill,inner sep=3pt ] (c1) at (5,2) {};
			\node[circle,fill,inner sep=3pt ] (d1) at (5,3) {};

			\node[circle,fill,inner sep=3pt ] (e1) at (7,0) {};
			\node[circle,fill,inner sep=3pt ] (f1) at (7,1.5) {};
			\node[circle,fill,inner sep=3pt,color=black] (g1) at (7,3) {};
			
			\draw[ line width=2pt, color=black] (b1) -- (g1);
			\draw (c1) -- (f1);
			\draw (d1) -- (e1);

			\node[circle,fill,inner sep=3pt ] (a3) at (0,-6) {};
			\node[circle,fill,inner sep=3pt ] (b3) at (0,-5) {};
			\node[circle,fill,inner sep=3pt ] (c3) at (0,-4) {};
			\node[circle,fill,inner sep=3pt ] (d3) at (0,-3) {};

			\node[circle,fill,inner sep=3pt ] (e3) at (2,-6) {};
			\node[circle,fill,inner sep=3pt ] (f3) at (2,-4.5) {};
			\node[circle,fill,inner sep=3pt ] (g3) at (2,-3) {};

			\draw (d3) -- (f3);
			\draw (c3) -- (e3);
			
			\draw[ >->] (1,-2)--(1,-1);

			\node[circle,fill,inner sep=3pt ] (a4) at (5,-6) {};
			\node[circle,fill,inner sep=3pt ] (b4) at (5,-5) {};
			\node[circle,fill,inner sep=3pt ] (c4) at (5,-4) {};
			\node[circle,fill,inner sep=3pt ] (d4) at (5,-3) {};

			\node[circle,fill,inner sep=3pt ] (e4) at (7,-6) {};
			\node[circle,fill,inner sep=3pt ] (f4) at (7,-4.5) {};
			\node[circle,fill,inner sep=3pt ] (g4) at (7,-3) {};
			
			\draw (c4) -- (f4);
			\draw (d4) -- (e4);
			
			\draw[ >->] (6,-2)--(6,-1);

		\end{tikzpicture}
	\end{center}
	\caption{All pairings between $1$-simplices and $2$-simplices of $\M(K_{4,3})$ with respect to $\vv^{(3,1)}$.}\label{matching 12 (31) K43}
\end{figure}

Therefore, $\U_{3}=\{\sigma \mid \sigma \text{ does not appear in }\bigsqcup \limits_{i=1}^{3}\vv^{(i,1)}\}$.

\textbf{Step IV:}
\begin{align*}
	\text {Let }\vv^{(1,2)}:=&\{(\sigma, \sigma \sqcup\{(1,2)\}) \mid \sigma \sqcup \{(1,2)\} \in \M(\kk),\text{ and both both } \sigma, \sigma \sqcup \{(1,2)\} \in \U_{3}\}.
\end{align*}
\autoref{matching 01 (12) K43} and \autoref{matching 12 (12) K43} illustrate these pairings.
\begin{figure}[htbp]
	\begin{center}
		\begin{tikzpicture}[transform shape, scale = 0.5]
		
			\node[circle,fill,inner sep=3pt ] (a) at (0,0) {};
			\node[circle,fill,inner sep=3pt ] (b) at (0,1) {};
			\node[circle,fill,inner sep=3pt ] (c) at (0,2) {};
			\node[circle,fill,inner sep=3pt,color=black] (d) at (0,3) {};

			\node[circle,fill,inner sep=3pt ] (e) at (2,0) {};
			\node[circle,fill,inner sep=3pt,color=black] (f) at (2,1.5) {};
			\node[circle,fill,inner sep=3pt ] (g) at (2,3) {};

			\draw[ line width=2pt, color=black] (d) -- (f);
			\draw (b) -- (g);

			\node[circle,fill,inner sep=3pt ] (a1) at (5,0) {};
			\node[circle,fill,inner sep=3pt ] (b1) at (5,1) {};
			\node[circle,fill,inner sep=3pt ] (c1) at (5,2) {};
			\node[circle,fill,inner sep=3pt,color=black] (d1) at (5,3) {};

			\node[circle,fill,inner sep=3pt ] (e1) at (7,0) {};
			\node[circle,fill,inner sep=3pt,color=black] (f1) at (7,1.5) {};
			\node[circle,fill,inner sep=3pt ] (g1) at (7,3) {};
			
			\draw[ line width=2pt, color=black] (d1) -- (f1);
			\draw (a1) -- (g1);

			\node[circle,fill,inner sep=3pt ] (a3) at (0,-6) {};
			\node[circle,fill,inner sep=3pt ] (b3) at (0,-5) {};
			\node[circle,fill,inner sep=3pt ] (c3) at (0,-4) {};
			\node[circle,fill,inner sep=3pt ] (d3) at (0,-3) {};

			\node[circle,fill,inner sep=3pt ] (e3) at (2,-6) {};
			\node[circle,fill,inner sep=3pt ] (f3) at (2,-4.5) {};
			\node[circle,fill,inner sep=3pt ] (g3) at (2,-3) {};

			\draw (b3) -- (g3);
			
			\draw[ >->] (1,-2)--(1,-1);

			\node[circle,fill,inner sep=3pt ] (a4) at (5,-6) {};
			\node[circle,fill,inner sep=3pt ] (b4) at (5,-5) {};
			\node[circle,fill,inner sep=3pt ] (c4) at (5,-4) {};
			\node[circle,fill,inner sep=3pt ] (d4) at (5,-3) {};

			\node[circle,fill,inner sep=3pt ] (e4) at (7,-6) {};
			\node[circle,fill,inner sep=3pt ] (f4) at (7,-4.5) {};
			\node[circle,fill,inner sep=3pt ] (g4) at (7,-3) {};
			
			\draw (a4) -- (g4);
			
			\draw[ >->] (6,-2)--(6,-1);

		\end{tikzpicture}
	\end{center}
	\caption{Pairing between $0$-simplices and $1$-simplices of $\M(K_{4,3})$ with respect to $\vv^{(1,2)}$.}\label{matching 01 (12) K43}
\end{figure}
\begin{figure}[htbp]
	\begin{center}
		
		\begin{tikzpicture}[transform shape, scale = 0.5]
		
			\node[circle,fill,inner sep=3pt ] (a) at (0,0) {};
			\node[circle,fill,inner sep=3pt ] (b) at (0,1) {};
			\node[circle,fill,inner sep=3pt ] (c) at (0,2) {};
			\node[circle,fill,inner sep=3pt,color=black] (d) at (0,3) {};

			\node[circle,fill,inner sep=3pt ] (e) at (2,0) {};
			\node[circle,fill,inner sep=3pt,color=black] (f) at (2,1.5) {};
			\node[circle,fill,inner sep=3pt ] (g) at (2,3) {};

			\draw[ line width=2pt, color=black] (d) -- (f);
			\draw (c) -- (e);
			\draw (a) -- (g);

			\node[circle,fill,inner sep=3pt ] (a1) at (5,0) {};
			\node[circle,fill,inner sep=3pt ] (b1) at (5,1) {};
			\node[circle,fill,inner sep=3pt ] (c1) at (5,2) {};
			\node[circle,fill,inner sep=3pt,color=black] (d1) at (5,3) {};

			\node[circle,fill,inner sep=3pt ] (e1) at (7,0) {};
			\node[circle,fill,inner sep=3pt,color=black] (f1) at (7,1.5) {};
			\node[circle,fill,inner sep=3pt ] (g1) at (7,3) {};
			
			\draw[ line width=2pt, color=black] (d1) -- (f1);
			\draw (b1) -- (g1);
			\draw (a1) -- (e1);

			\node[circle,fill,inner sep=3pt ] (a2) at (10,0) {};
			\node[circle,fill,inner sep=3pt ] (b2) at (10,1) {};
			\node[circle,fill,inner sep=3pt ] (c2) at (10,2) {};
			\node[circle,fill,inner sep=3pt,color=black] (d2) at (10,3) {};

			\node[circle,fill,inner sep=3pt ] (e2) at (12,0) {};
			\node[circle,fill,inner sep=3pt,color=black] (f2) at (12,1.5) {};
			\node[circle,fill,inner sep=3pt ] (g2) at (12,3) {};
			
			\draw[ line width=2pt, color=black] (d2) -- (f2);
			\draw (b2) -- (e2);
			\draw (a2) -- (g2);

			\node[circle,fill,inner sep=3pt ] (a4) at (0,-6) {};
			\node[circle,fill,inner sep=3pt ] (b4) at (0,-5) {};
			\node[circle,fill,inner sep=3pt ] (c4) at (0,-4) {};
			\node[circle,fill,inner sep=3pt ] (d4) at (0,-3) {};

			\node[circle,fill,inner sep=3pt ] (e4) at (2,-6) {};
			\node[circle,fill,inner sep=3pt ] (f4) at (2,-4.5) {};
			\node[circle,fill,inner sep=3pt ] (g4) at (2,-3) {};

			\draw (c4) -- (e4);
			\draw (a4) -- (g4);
			
			\draw[ >->] (1,-2)--(1,-1);
	
			\node[circle,fill,inner sep=3pt ] (a5) at (5,-6) {};
			\node[circle,fill,inner sep=3pt ] (b5) at (5,-5) {};
			\node[circle,fill,inner sep=3pt ] (c5) at (5,-4) {};
			\node[circle,fill,inner sep=3pt ] (d5) at (5,-3) {};

			\node[circle,fill,inner sep=3pt ] (e5) at (7,-6) {};
			\node[circle,fill,inner sep=3pt ] (f5) at (7,-4.5) {};
			\node[circle,fill,inner sep=3pt ] (g5) at (7,-3) {};
			
			\draw (b5) -- (g5);
			\draw (a5) -- (e5);
			
			\draw[ >->] (6,-2)--(6,-1);

			\node[circle,fill,inner sep=3pt ] (a6) at (10,-6) {};
			\node[circle,fill,inner sep=3pt ] (b6) at (10,-5) {};
			\node[circle,fill,inner sep=3pt ] (c6) at (10,-4) {};
			\node[circle,fill,inner sep=3pt ] (d6) at (10,-3) {};

			\node[circle,fill,inner sep=3pt ] (e6) at (12,-6) {};
			\node[circle,fill,inner sep=3pt ] (f6) at (12,-4.5) {};
			\node[circle,fill,inner sep=3pt ] (g6) at (12,-3) {};
			
			\draw (b6) -- (e6);
			\draw (a6) -- (g6);

			\draw[ >->] (11,-2)--(11,-1);
			
		\end{tikzpicture}
	\end{center}
	\caption{Pairing between $1$-simplices and $2$-simplices of $\M(K_{4,3})$ with respect to $\vv^{(1,2)}$.}\label{matching 12 (12) K43}
\end{figure}

Therefore, $\U_{4}=\{\sigma \mid \sigma \text{ does not appear in }\bigsqcup \limits_{i=1}^{3}\vv^{(i,1)}\bigsqcup \vv^{(1,2)}\}$.

\textbf{Step V:}
\begin{align*}
	\text{Let }\vv^{(2,2)}:=&\{(\sigma, \sigma \sqcup \{(2,2)\}) \mid \sigma \sqcup \{(2,2)\} \in \M(\kk) \text{ and both }\sigma, \sigma \sqcup \{(2,2)\} \in \U_{4}\}.
\end{align*}
\autoref{matching 12 (22) K43} illustrates all such pairings.
\begin{figure}[htbp]
	\begin{center}
		
		\begin{tikzpicture}[transform shape, scale = 0.5]
		
			\node[circle,fill,inner sep=3pt ] (a) at (0,0) {};
			\node[circle,fill,inner sep=3pt ] (b) at (0,1) {};
			\node[circle,fill,inner sep=3pt ] (c) at (0,2) {};
			\node[circle,fill,inner sep=3pt ] (d) at (0,3) {};

			\node[circle,fill,inner sep=3pt ] (e) at (2,0) {};
			\node[circle,fill,inner sep=3pt ] (f) at (2,1.5) {};
			\node[circle,fill,inner sep=3pt ] (g) at (2,3) {};

			\draw (d) -- (e);
			\draw (a) -- (g);

			\node[circle,fill,inner sep=3pt ] (a2) at (5,0) {};
			\node[circle,fill,inner sep=3pt ] (b2) at (5,1) {};
			\node[circle,fill,inner sep=3pt,color=black] (c2) at (5,2) {};
			\node[circle,fill,inner sep=3pt ] (d2) at (5,3) {};

			\node[circle,fill,inner sep=3pt ] (e2) at (7,0) {};
			\node[circle,fill,inner sep=3pt,color=black] (f2) at (7,1.5) {};
			\node[circle,fill,inner sep=3pt ] (g2) at (7,3) {};

			\draw[ line width=2pt, color=black] (c2) -- (f2);
			\draw (d2) -- (e2);
			\draw (a2) -- (g2);
			
			\draw[ >->] (3,1.5)--(4,1.5);
			
		\end{tikzpicture}
	\end{center}
	\caption{Pairings between $1$-simplices and $2$-simplices of $\M(K_{4,3})$ with respect to $\vv^{(2,2)}$.}\label{matching 12 (22) K43}
\end{figure}

Therefore, $\U_{5}=\{\sigma \mid \sigma \text{ does not appear in }\bigsqcup \limits_{i=1}^{3}\vv^{(i,1)}\bigsqcup \limits_{i=1}^{2}\vv^{(i,2)}\}$.

\textbf{Step VI:}
\begin{align*}
	\text {Let }\vv^{(3,2)}:=&\{(\sigma, \sigma \sqcup \{(3,2)\}) \mid  \sigma \sqcup \{(3,2)\} \in \M(\kk) \text{ and both } \sigma, \sigma \sqcup \{(3,2)\} \in \U_{5}\}.
\end{align*}
\autoref{matching 01 (32) K43} and \autoref{matching 12 (32) K43} illustrate all such pairs.
\begin{figure}[htbp]
	\begin{center}
		
		\begin{tikzpicture}[transform shape, scale = 0.5]
		
			\node[circle,fill,inner sep=3pt ] (a) at (0,0) {};
			\node[circle,fill,inner sep=3pt ] (b) at (0,1) {};
			\node[circle,fill,inner sep=3pt ] (c) at (0,2) {};
			\node[circle,fill,inner sep=3pt ] (d) at (0,3) {};

			\node[circle,fill,inner sep=3pt ] (e) at (2,0) {};
			\node[circle,fill,inner sep=3pt ] (f) at (2,1.5) {};
			\node[circle,fill,inner sep=3pt ] (g) at (2,3) {};

			\draw (c) -- (g);

			\node[circle,fill,inner sep=3pt ] (a2) at (5,0) {};
			\node[circle,fill,inner sep=3pt,color=black] (b2) at (5,1) {};
			\node[circle,fill,inner sep=3pt ] (c2) at (5,2) {};
			\node[circle,fill,inner sep=3pt ] (d2) at (5,3) {};

			\node[circle,fill,inner sep=3pt ] (e2) at (7,0) {};
			\node[circle,fill,inner sep=3pt,color=black] (f2) at (7,1.5) {};
			\node[circle,fill,inner sep=3pt ] (g2) at (7,3) {};
			
			\draw[ line width=2pt, color=black] (b2) -- (f2);
			
			\draw (c2) -- (g2);
			
			\draw[ >->] (3,1.5)--(4,1.5);
			
		\end{tikzpicture}
	\end{center}
	\caption{Pairings between $0$-simplices and $1$-simplices of $\M(K_{4,3})$ with respect to $\vv^{(3,2)}$.}\label{matching 01 (32) K43}
\end{figure}
\begin{figure}[htbp]
	\begin{center}
		
		\begin{tikzpicture}[transform shape, scale = 0.5]
		
			\node[circle,fill,inner sep=3pt ] (a) at (0,0) {};
			\node[circle,fill,inner sep=3pt ] (b) at (0,1) {};
			\node[circle,fill,inner sep=3pt ] (c) at (0,2) {};
			\node[circle,fill,inner sep=3pt ] (d) at (0,3) {};

			\node[circle,fill,inner sep=3pt ] (e) at (2,0) {};
			\node[circle,fill,inner sep=3pt ] (f) at (2,1.5) {};
			\node[circle,fill,inner sep=3pt ] (g) at (2,3) {};

			\draw (c) -- (g);
			\draw (a) -- (e);

			\node[circle,fill,inner sep=3pt ] (a2) at (5,0) {};
			\node[circle,fill,inner sep=3pt,color=black] (b2) at (5,1) {};
			\node[circle,fill,inner sep=3pt ] (c2) at (5,2) {};
			\node[circle,fill,inner sep=3pt ] (d2) at (5,3) {};

			\node[circle,fill,inner sep=3pt ] (e2) at (7,0) {};
			\node[circle,fill,inner sep=3pt,color=black] (f2) at (7,1.5) {};
			\node[circle,fill,inner sep=3pt ] (g2) at (7,3) {};
			
			\draw[ line width=2pt, color=black] (b2) -- (f2);
			
			\draw (c2) -- (g2);
			\draw (a2) -- (e2);

			\draw[ >->] (3,1.5)--(4,1.5);

		\end{tikzpicture}
	\end{center}
	\caption{Pairings between $1$-simplices and $2$-simplices of $\M(K_{4,3})$ with respect to $\vv^{(3,2)}$.}\label{matching 12 (32) K43}
\end{figure}

Therefore, $\U_{6}=\{\sigma \mid \sigma \text{ does not appear in }\bigsqcup \limits_{i=1}^{3}\vv^{(i,1)}\bigsqcup \limits_{i=1}^{3}\vv^{(i,2)}\}$.

\textbf{Step VII:}
\begin{align*}
	\text{Let }\vv^{(4,2)}:=&\{(\sigma, \sigma \sqcup \{(4,2)\}) \mid \sigma \sqcup \{(4,2)\} \in \M(\kk) \text{ and both } \sigma, \sigma \sqcup \{(4,2)\} \in \U_{6}\}.
\end{align*}
\autoref{matching 12 (42) K43} illustrates these pairs.

\begin{figure}[htbp]
	\begin{center}
		
		\begin{tikzpicture}[transform shape, scale = 0.5]
	
			\node[circle,fill,inner sep=3pt,color=black] (a) at (0,0) {};
			\node[circle,fill,inner sep=3pt ] (b) at (0,1) {};
			\node[circle,fill,inner sep=3pt ] (c) at (0,2) {};
			\node[circle,fill,inner sep=3pt ] (d) at (0,3) {};

			\node[circle,fill,inner sep=3pt ] (e) at (2,0) {};
			\node[circle,fill,inner sep=3pt,color=black] (f) at (2,1.5) {};
			\node[circle,fill,inner sep=3pt ] (g) at (2,3) {};

			\draw[ line width=2pt, color=black] (a) -- (f);
			\draw (d) -- (e);
			\draw (b) -- (g);

			\node[circle,fill,inner sep=3pt,color=black] (a1) at (5,0) {};
			\node[circle,fill,inner sep=3pt ] (b1) at (5,1) {};
			\node[circle,fill,inner sep=3pt ] (c1) at (5,2) {};
			\node[circle,fill,inner sep=3pt ] (d1) at (5,3) {};

			\node[circle,fill,inner sep=3pt ] (e1) at (7,0) {};
			\node[circle,fill,inner sep=3pt,color=black] (f1) at (7,1.5) {};
			\node[circle,fill,inner sep=3pt ] (g1) at (7,3) {};
			
			\draw[ line width=2pt, color=black] (a1) -- (f1);
			\draw (c1) -- (g1);
			\draw (b1) -- (e1);

			\node[circle,fill,inner sep=3pt,color=black] (a2) at (10,0) {};
			\node[circle,fill,inner sep=3pt ] (b2) at (10,1) {};
			\node[circle,fill,inner sep=3pt ] (c2) at (10,2) {};
			\node[circle,fill,inner sep=3pt ] (d2) at (10,3) {};

			\node[circle,fill,inner sep=3pt ] (e2) at (12,0) {};
			\node[circle,fill,inner sep=3pt,color=black] (f2) at (12,1.5) {};
			\node[circle,fill,inner sep=3pt ] (g2) at (12,3) {};
			
			\draw[ line width=2pt, color=black] (a2) -- (f2);
			\draw (c2) -- (e2);
			\draw (b2) -- (g2);

			\node[circle,fill,inner sep=3pt ] (a4) at (0,-6) {};
			\node[circle,fill,inner sep=3pt ] (b4) at (0,-5) {};
			\node[circle,fill,inner sep=3pt ] (c4) at (0,-4) {};
			\node[circle,fill,inner sep=3pt ] (d4) at (0,-3) {};

			\node[circle,fill,inner sep=3pt ] (e4) at (2,-6) {};
			\node[circle,fill,inner sep=3pt ] (f4) at (2,-4.5) {};
			\node[circle,fill,inner sep=3pt ] (g4) at (2,-3) {};

			\draw (d4) -- (e4);
			\draw (b4) -- (g4);
			
			\draw[ >->] (1,-2)--(1,-1);

			\node[circle,fill,inner sep=3pt ] (a5) at (5,-6) {};
			\node[circle,fill,inner sep=3pt ] (b5) at (5,-5) {};
			\node[circle,fill,inner sep=3pt ] (c5) at (5,-4) {};
			\node[circle,fill,inner sep=3pt ] (d5) at (5,-3) {};

			\node[circle,fill,inner sep=3pt ] (e5) at (7,-6) {};
			\node[circle,fill,inner sep=3pt ] (f5) at (7,-4.5) {};
			\node[circle,fill,inner sep=3pt ] (g5) at (7,-3) {};
			
			\draw (c5) -- (g5);
			\draw (b5) -- (e5);
			
			\draw[ >->] (6,-2)--(6,-1);

			\node[circle,fill,inner sep=3pt ] (a6) at (10,-6) {};
			\node[circle,fill,inner sep=3pt ] (b6) at (10,-5) {};
			\node[circle,fill,inner sep=3pt ] (c6) at (10,-4) {};
			\node[circle,fill,inner sep=3pt ] (d6) at (10,-3) {};

			\node[circle,fill,inner sep=3pt ] (e6) at (12,-6) {};
			\node[circle,fill,inner sep=3pt ] (f6) at (12,-4.5) {};
			\node[circle,fill,inner sep=3pt ] (g6) at (12,-3) {};
			
			\draw (c6) -- (e6);
			\draw (b6) -- (g6);

			\draw[ >->] (11,-2)--(11,-1);

		\end{tikzpicture}
	\end{center}
	\caption{Pairings between $1$-simplices and $2$-simplices of $\M(K_{4,3})$ with respect to $\vv^{(4,2)}$.}\label{matching 12 (42) K43}
\end{figure}

Therefore, $\U_{7}=\{\sigma \mid \sigma \text{ does not appear in }\bigsqcup \limits_{i=1}^{3}\vv^{(i,1)}\bigsqcup \limits_{i=1}^{4}\vv^{(i,2)}\}$.

\textbf{Step VIII:}
\begin{align*}
	\text{Let }\vv^{(1,3)}:=&\{(\sigma, \sigma \sqcup \{(1,3)\}) \mid \sigma \sqcup \{(1,3)\} \in \M(\kk) \text{ and both } \sigma, \sigma \sqcup \{(1,3)\} \in \U_{7}\}.
\end{align*}
\autoref{matching 12 (13) K43} illustrates this pair.
\begin{figure}[htbp]
	\begin{center}
		
		\begin{tikzpicture}[transform shape, scale = 0.5]
		
			\node[circle,fill,inner sep=3pt ] (a) at (0,0) {};
			\node[circle,fill,inner sep=3pt ] (b) at (0,1) {};
			\node[circle,fill,inner sep=3pt ] (c) at (0,2) {};
			\node[circle,fill,inner sep=3pt ] (d) at (0,3) {};

			\node[circle,fill,inner sep=3pt ] (e) at (2,0) {};
			\node[circle,fill,inner sep=3pt ] (f) at (2,1.5) {};
			\node[circle,fill,inner sep=3pt ] (g) at (2,3) {};

			\draw (b) -- (f);
			\draw (a) -- (g);

			\node[circle,fill,inner sep=3pt ] (a2) at (5,0) {};
			\node[circle,fill,inner sep=3pt ] (b2) at (5,1) {};
			\node[circle,fill,inner sep=3pt ] (c2) at (5,2) {};
			\node[circle,fill,inner sep=3pt,color=black] (d2) at (5,3) {};

			\node[circle,fill,inner sep=3pt ] (e2) at (7,0) {};
			\node[circle,fill,inner sep=3pt,color=black] (f2) at (7,1.5) {};
			\node[circle,fill,inner sep=3pt ] (g2) at (7,3) {};
			
			\draw[ line width=2pt, color=black] (d2) -- (e2);
			\draw (b2) -- (f2);
			\draw (a2) -- (g2);

			\draw[ >->] (3,1.5)--(4,1.5);

		\end{tikzpicture}
	\end{center}
	\caption{Pairing between $1$-simplices and $2$-simplices of $\M(K_{4,3})$ with respect to $\vv^{(1,3)}$.}\label{matching 12 (13) K43}
\end{figure}

Therefore, $\U_{8}=\{\sigma \mid \sigma \text{ does not appear in }\bigsqcup \limits_{i=1}^{3}\vv^{(i,1)}\bigsqcup \limits_{i=1}^{4}\vv^{(i,2)}\bigsqcup \vv^{(1,3)}\}$.

\textbf{Step IX:}
\begin{align*}
	\text{Let }\vv^{(2,3)}:=&\{(\sigma, \sigma \sqcup \{(2,3)\}) \mid \sigma \sqcup \{(2,3)\} \in \M(\kk) \text{ and both } \sigma, \sigma \sqcup \{(2,3)\} \in \U_{8}\}.
\end{align*}
\autoref{matching 12 (23) K43} illustrates this pair.
\begin{figure}[htbp]
	\begin{center}
		
		\begin{tikzpicture}[transform shape, scale = 0.5]
	
			\node[circle,fill,inner sep=3pt ] (a) at (0,0) {};
			\node[circle,fill,inner sep=3pt ] (b) at (0,1) {};
			\node[circle,fill,inner sep=3pt ] (c) at (0,2) {};
			\node[circle,fill,inner sep=3pt ] (d) at (0,3) {};

			\node[circle,fill,inner sep=3pt ] (e) at (2,0) {};
			\node[circle,fill,inner sep=3pt ] (f) at (2,1.5) {};
			\node[circle,fill,inner sep=3pt ] (g) at (2,3) {};

			\draw (c) -- (f);
			\draw (a) -- (g);

			\node[circle,fill,inner sep=3pt ] (a2) at (5,0) {};
			\node[circle,fill,inner sep=3pt,color=black] (b2) at (5,1) {};
			\node[circle,fill,inner sep=3pt ] (c2) at (5,2) {};
			\node[circle,fill,inner sep=3pt ] (d2) at (5,3) {};

			\node[circle,fill,inner sep=3pt,color=black] (e2) at (7,0) {};
			\node[circle,fill,inner sep=3pt ] (f2) at (7,1.5) {};
			\node[circle,fill,inner sep=3pt ] (g2) at (7,3) {};

			\draw[ line width=2pt, color=black] (b2) -- (e2);
			\draw (c2) -- (f2);
			\draw (a2) -- (g2);
			
			\draw[ >->] (3,1.5)--(4,1.5);

		\end{tikzpicture}
	\end{center}
	\caption{Pairing between $1$-simplices and $2$-simplices of $\M(K_{4,3})$ with respect to $\vv^{(2,3)}$.}\label{matching 12 (23) K43}
\end{figure}

Therefore, $\U_{9}=\{\sigma \mid \sigma \text{ does not appear in }\bigsqcup \limits_{i=1}^{3}\vv^{(i,1)}\bigsqcup \limits_{i=1}^{4}\vv^{(i,2)}\bigsqcup \limits_{i=1}^{2} \vv^{(i,3)}\}$.

\textbf{Step X:}
\begin{align*}
	\text{Let }\vv^{(4,3)}:=&\{(\sigma, \sigma \sqcup \{(4,3)\}) \mid \sigma \sqcup \{(4,3)\} \in \M(\kk) \text{ and both } \sigma, \sigma \sqcup \{(4,3)\} \in \U_{9}\}.
\end{align*}
\autoref{matching 12 (43) K43} illustrates this pair.
\begin{figure}[htbp]
	\begin{center}
		
		\begin{tikzpicture}[transform shape, scale = 0.5]
		
			\node[circle,fill,inner sep=3pt ] (a) at (0,0) {};
			\node[circle,fill,inner sep=3pt ] (b) at (0,1) {};
			\node[circle,fill,inner sep=3pt ] (c) at (0,2) {};
			\node[circle,fill,inner sep=3pt ] (d) at (0,3) {};

			\node[circle,fill,inner sep=3pt ] (e) at (2,0) {};
			\node[circle,fill,inner sep=3pt ] (f) at (2,1.5) {};
			\node[circle,fill,inner sep=3pt ] (g) at (2,3) {};

			\draw (c) -- (f);
			\draw (b) -- (g);

			\node[circle,fill,inner sep=3pt,color=black] (a2) at (5,0) {};
			\node[circle,fill,inner sep=3pt ] (b2) at (5,1) {};
			\node[circle,fill,inner sep=3pt ] (c2) at (5,2) {};
			\node[circle,fill,inner sep=3pt ] (d2) at (5,3) {};

			\node[circle,fill,inner sep=3pt,color=black] (e2) at (7,0) {};
			\node[circle,fill,inner sep=3pt ] (f2) at (7,1.5) {};
			\node[circle,fill,inner sep=3pt ] (g2) at (7,3) {};

			\draw[ line width=2pt, color=black] (a2) -- (e2);
			\draw (c2) -- (f2);
			\draw (b2) -- (g2);

			\draw[ >->] (3,1.5)--(4,1.5);
			
		\end{tikzpicture}
	\end{center}
	\caption{Pairing between $1$-simplices and $2$-simplices of $\M(K_{4,3})$ with respect to $\vv^{(4,3)}$.}\label{matching 12 (43) K43}
\end{figure}

Now, we define $\vv = \bigsqcup \limits_{i=1}^{3}\vv^{(i,1)} \bigsqcup \limits_{i=1}^{4}\vv^{(i,2)} \bigsqcup \limits_{\substack{i=1\\ i\neq 3}}^{4} \vv^{(i,3)}$. By construction, it follows that $\vv$ is well-defined. We can also verify that $\vv$ is acyclic. \autoref{k43 unpaired} illustrates the $\vv$-critical simplices of $\M(K_{4,3})$.
\begin{figure}[htbp]
	\begin{center}
		
		\begin{tikzpicture}[transform shape, scale = 0.5]
		
			\node[circle,fill,inner sep=3pt ] (a) at (0,0) {};
			\node[circle,fill,inner sep=3pt ] (b) at (0,1) {};
			\node[circle,fill,inner sep=3pt ] (c) at (0,2) {};
			\node[circle,fill,inner sep=3pt ] (d) at (0,3) {};

			\node[circle,fill,inner sep=3pt ] (e) at (2,0) {};
			\node[circle,fill,inner sep=3pt ] (f) at (2,1.5) {};
			\node[circle,fill,inner sep=3pt ] (g) at (2,3) {};

			\draw (d) -- (g);
	
			\node[circle,fill,inner sep=3pt ] (a1) at (5,0) {};
			\node[circle,fill,inner sep=3pt ] (b1) at (5,1) {};
			\node[circle,fill,inner sep=3pt ] (c1) at (5,2) {};
			\node[circle,fill,inner sep=3pt ] (d1) at (5,3) {};

			\node[circle,fill,inner sep=3pt ] (e1) at (7,0) {};
			\node[circle,fill,inner sep=3pt ] (f1) at (7,1.5) {};
			\node[circle,fill,inner sep=3pt ] (g1) at (7,3) {};
			
			\draw(c1) -- (g1);
		
			\draw (a1) -- (f1);

			\node[circle,fill,inner sep=3pt ] (a2) at (10,0) {};
			\node[circle,fill,inner sep=3pt ] (b2) at (10,1) {};
			\node[circle,fill,inner sep=3pt ] (c2) at (10,2) {};
			\node[circle,fill,inner sep=3pt ] (d2) at (10,3) {};

			\node[circle,fill,inner sep=3pt ] (e2) at (12,0) {};
			\node[circle,fill,inner sep=3pt ] (f2) at (12,1.5) {};
			\node[circle,fill,inner sep=3pt ] (g2) at (12,3) {};
			
			\draw (b2) -- (g2);
		
			\draw (a2) -- (f2);

			\node[circle,fill,inner sep=3pt ] (a3) at (15,0) {};
			\node[circle,fill,inner sep=3pt ] (b3) at (15,1) {};
			\node[circle,fill,inner sep=3pt ] (c3) at (15,2) {};
			\node[circle,fill,inner sep=3pt ] (d3) at (15,3) {};

			\node[circle,fill,inner sep=3pt ] (e3) at (17,0) {};
			\node[circle,fill,inner sep=3pt ] (f3) at (17,1.5) {};
			\node[circle,fill,inner sep=3pt ] (g3) at (17,3) {};
			
			\draw(c3) -- (e3);
			\draw (b3) -- (f3);
			\draw (a3) -- (g3);
		\end{tikzpicture}
	\end{center}
	\caption{All $\vv$-critical simplices in $\M(K_{4,3})$}\label{k43 unpaired}
\end{figure}

Therefore, for each $\alpha^{(1)} \in \mathcal{N}(\M(\K))$, the $\oW$-critical simplices \textbf{are} copies of the  $\vv$-critical simplices which differ by a labelling of the vertices. Since there are $10$ one-dimensional simplices in $\mathcal{N}(\M(\K))$, therefore,
\begin{align*}
	&\sum \limits_{\alpha^{(1)} \in \mathcal{N}(\M(\K))}|\Crit_0^{\oW}(\Aa)|=10,\\ & \sum \limits_{\alpha^{(1)} \in \mathcal{N}(\M(\K))}|\Crit_1^{\oW}(\Aa)|=20,\\ & \sum \limits_{\alpha^{(1)} \in \mathcal{N}(\M(\K))}|\Crit_2^{\oW}(\Aa)|=10.
\end{align*}

\textbf{Case III:} Let $\dim(\alpha)=2$.\label{ex:case3}

In this case, we observe that each $\Aa$ is a copy of $\M(\Kk)$ (where the edge set of $\Kk$ is $[4] \times [2]$), which differ by a labelling of vertices of $\Kk$. As in Case II, here also, it is sufficient to construct a gradient vector field on $\M(\Kk)$. We now construct such a gradient vector field $\vv'$ sequentially in $4$ steps. As before, the left and right partite sets denote $[4]$ and $[2]$ respectively, while the vertices are numbered from top to bottom in the figures.

Let $\U_{i}'$ denote the collection of all unpaired simplices after the $i^{th}$ step.

\textbf{Step I:}
\begin{align*}
	\text{Let }\vv^{(1,1)}:=&\{(\sigma, \sigma \sqcup \{(1,1)\}) \mid \sigma \sqcup \{(1,1)\} \in \M(K_{4,2})\}.
\end{align*}
\autoref{matching 01 (11) K42}  illustrates these pairs.
\begin{figure}[htbp]
	\begin{center}
		
		\begin{tikzpicture}[transform shape, scale = 0.5]
		
			\node[circle,fill,inner sep=3pt ] (a) at (0,0) {};
			\node[circle,fill,inner sep=3pt ] (b) at (0,1) {};
			\node[circle,fill,inner sep=3pt ] (c) at (0,2) {};
			\node[circle,fill,inner sep=3pt,color=black] (d) at (0,3) {};

			\node[circle,fill,inner sep=3pt ] (e) at (2,0.5) {};
			\node[circle,fill,inner sep=3pt,color=black] (f) at (2,2.5) {};
			
			\draw[ line width=2pt, color=black] (d) -- (f);
			\draw (c) -- (e);

			\node[circle,fill,inner sep=3pt ] (a1) at (5,0) {};
			\node[circle,fill,inner sep=3pt ] (b1) at (5,1) {};
			\node[circle,fill,inner sep=3pt ] (c1) at (5,2) {};
			\node[circle,fill,inner sep=3pt,color=black] (d1) at (5,3) {};

			\node[circle,fill,inner sep=3pt ] (e1) at (7,0.5) {};
			\node[circle,fill,inner sep=3pt,color=black] (f1) at (7,2.5) {};
			
			\draw[ line width=2pt, color=black] (d1) -- (f1);
			\draw (b1) -- (e1);

			\node[circle,fill,inner sep=3pt ] (a2) at (10,0) {};
			\node[circle,fill,inner sep=3pt ] (b2) at (10,1) {};
			\node[circle,fill,inner sep=3pt ] (c2) at (10,2) {};
			\node[circle,fill,inner sep=3pt,color=black] (d2) at (10,3) {};

			\node[circle,fill,inner sep=3pt ] (e2) at (12,0.5) {};
			\node[circle,fill,inner sep=3pt,color=black] (f2) at (12,2.5) {};
			
			\draw[ line width=2pt, color=black] (d2) -- (f2);
			\draw (a2) -- (e2);

			\node[circle,fill,inner sep=3pt ] (a4) at (0,-6) {};
			\node[circle,fill,inner sep=3pt ] (b4) at (0,-5) {};
			\node[circle,fill,inner sep=3pt ] (c4) at (0,-4) {};
			\node[circle,fill,inner sep=3pt ] (d4) at (0,-3) {};

			\node[circle,fill,inner sep=3pt ] (e4) at (2,-5.5) {};
			\node[circle,fill,inner sep=3pt ] (f4) at (2,-3.5) {};
			
			\draw (c4) -- (e4);
			
			\draw[ >->] (1,-2)--(1,-1);

			\node[circle,fill,inner sep=3pt ] (a5) at (5,-6) {};
			\node[circle,fill,inner sep=3pt ] (b5) at (5,-5) {};
			\node[circle,fill,inner sep=3pt ] (c5) at (5,-4) {};
			\node[circle,fill,inner sep=3pt ] (d5) at (5,-3) {};

			\node[circle,fill,inner sep=3pt ] (e5) at (7,-5.5) {};
			\node[circle,fill,inner sep=3pt ] (f5) at (7,-3.5) {};
			
			\draw (b5) -- (e5);
			
			\draw[ >->] (6,-2)--(6,-1);
			
			\node[circle,fill,inner sep=3pt ] (a6) at (10,-6) {};
			\node[circle,fill,inner sep=3pt ] (b6) at (10,-5) {};
			\node[circle,fill,inner sep=3pt ] (c6) at (10,-4) {};
			\node[circle,fill,inner sep=3pt ] (d6) at (10,-3) {};

			\node[circle,fill,inner sep=3pt ] (e6) at (12,-5.5) {};
			\node[circle,fill,inner sep=3pt ] (f6) at (12,-3.5) {};
			
			\draw (a6) -- (e6);

			\draw[ >->] (11,-2)--(11,-1);
			
		\end{tikzpicture}
	\end{center}
	\caption{Pairing between $0$-simplices and $1$-simplices of $\M(K_{4,2})$ with respect to $\vv^{(1,1)}$.}\label{matching 01 (11) K42}
\end{figure}

Therefore,
$\U_{1}'=\{\sigma \mid \sigma \text{ does not appear in }\vv^{(1,1)}\}$.

\textbf{Step II:}
\begin{align*}
	\text{Let }\vv^{(2,1)}:=&\{(\sigma, \sigma \sqcup \{(2,1)\}) \mid \sigma \sqcup \{(2,1)\} \in \M(K_{4,2}) \text{ and both } \sigma, \sigma \sqcup \{(2,1)\} \in \U_{1}'\}.
\end{align*}
\autoref{matching 01 (21) K42} illustrates this pair

\begin{figure}[htbp]
	\begin{center}
		
		\begin{tikzpicture}[transform shape, scale = 0.5]
		
			\node[circle,fill,inner sep=3pt ] (a) at (0,0) {};
			\node[circle,fill,inner sep=3pt ] (b) at (0,1) {};
			\node[circle,fill,inner sep=3pt ] (c) at (0,2) {};
			\node[circle,fill,inner sep=3pt ] (d) at (0,3) {};

			\node[circle,fill,inner sep=3pt ] (e) at (2,0.5) {};
			\node[circle,fill,inner sep=3pt ] (f) at (2,2.5) {};
			
			\draw (d) -- (e);

			\node[circle,fill,inner sep=3pt ] (a2) at (5,0) {};
			\node[circle,fill,inner sep=3pt ] (b2) at (5,1) {};
			\node[circle,fill,inner sep=3pt,color=black] (c2) at (5,2) {};
			\node[circle,fill,inner sep=3pt ] (d2) at (5,3) {};

			\node[circle,fill,inner sep=3pt ] (e2) at (7,0.5) {};
			\node[circle,fill,inner sep=3pt,color=black] (f2) at (7,2.5) {};
			
			\draw (d2) -- (e2);
			\draw[ line width=2pt, color=black] (c2) -- (f2);
			
			\draw[ >->] (3,1.5)--(4,1.5);

		\end{tikzpicture}
	\end{center}
	\caption{Pairing between $0$-simplices and $1$-simplices of $\M(K_{4,2})$ with respect to $\vv^{(2,1)}$.}\label{matching 01 (21) K42}
\end{figure}
Therefore, $\U_{2}'=\{\sigma \mid \sigma \text{ does not appear }\bigsqcup \limits_{i=1}^{2} \vv^{(i,1)}\}$.

\textbf{Step III:}
\begin{align*}
	\text{Let }\vv^{(1,2)}:=&\{(\sigma, \sigma \sqcup \{(1,2)\}) \mid \sigma \sqcup \{(1,2)\} \in \M(K_{4,2}) \text{ and both } \sigma, \sigma \sqcup \{(1,2)\} \in \U_{2}'\}.
\end{align*}
\autoref{matching 01 (12) K42} illustrates these pairs.
\begin{figure}[htbp]
	\begin{center}
		
		\begin{tikzpicture}[transform shape, scale = 0.5]
			
			\node[circle,fill,inner sep=3pt ] (a) at (0,0) {};
			\node[circle,fill,inner sep=3pt ] (b) at (0,1) {};
			\node[circle,fill,inner sep=3pt ] (c) at (0,2) {};
			\node[circle,fill,inner sep=3pt,color=black] (d) at (0,3) {};

			\node[circle,fill,inner sep=3pt,color=black] (e) at (2,0.5) {};
			\node[circle,fill,inner sep=3pt ] (f) at (2,2.5) {};
			
			\draw[ line width=2pt, color=black] (d) -- (e);
			\draw (b) -- (f);

			\node[circle,fill,inner sep=3pt ] (a1) at (5,0) {};
			\node[circle,fill,inner sep=3pt ] (b1) at (5,1) {};
			\node[circle,fill,inner sep=3pt ] (c1) at (5,2) {};
			\node[circle,fill,inner sep=3pt,color=black] (d1) at (5,3) {};

			\node[circle,fill,inner sep=3pt,color=black] (e1) at (7,0.5) {};
			\node[circle,fill,inner sep=3pt ] (f1) at (7,2.5) {};
			
			\draw[ line width=2pt, color=black] (d1) -- (e1);
			\draw (a1) -- (f1);

			\node[circle,fill,inner sep=3pt ] (a3) at (0,-6) {};
			\node[circle,fill,inner sep=3pt ] (b3) at (0,-5) {};
			\node[circle,fill,inner sep=3pt ] (c3) at (0,-4) {};
			\node[circle,fill,inner sep=3pt ] (d3) at (0,-3) {};

			\node[circle,fill,inner sep=3pt ] (e3) at (2,-5.5) {};
			\node[circle,fill,inner sep=3pt ] (f3) at (2,-3.5) {};
			
			\draw (b3) -- (f3);
			
			\draw[ >->] (1,-2)--(1,-1);

			\node[circle,fill,inner sep=3pt ] (a4) at (5,-6) {};
			\node[circle,fill,inner sep=3pt ] (b4) at (5,-5) {};
			\node[circle,fill,inner sep=3pt ] (c4) at (5,-4) {};
			\node[circle,fill,inner sep=3pt ] (d4) at (5,-3) {};

			\node[circle,fill,inner sep=3pt ] (e4) at (7,-5.5) {};
			\node[circle,fill,inner sep=3pt ] (f4) at (7,-3.5) {};
			
			\draw (a4) -- (f4);

			\draw[ >->] (6,-2)--(6,-1);

		\end{tikzpicture}
	\end{center}
	\caption{Pairings between $0$-simplices and $1$-simplices of $\M(K_{4,2})$ with respect to $\vv^{(1,2)}$.}\label{matching 01 (12) K42}
\end{figure}

Therefore, $\U_{3}'=\{\sigma \mid \sigma \text{ does not appear in }\bigsqcup \limits_{i=1}^{2} \vv^{(i,1)} \bigsqcup \vv^{(1,2)}\}$.

\textbf{Step IV:}
\begin{align*}
	\text{Let }\vv^{(3,2)}:=&\{(\sigma, \sigma \sqcup \{(3,2)\}) \mid \sigma \sqcup \{(3,2)\} \in \M(K_{4,2}) \text{ and both } \sigma, \sigma \sqcup \{(3,2)\} \in \U_{3}'\}.
\end{align*}
\autoref{matching 01 (32) K42} illustrates this pair.
\begin{figure}[htbp]
	\begin{center}
		
		\begin{tikzpicture}[transform shape, scale = 0.5]
		
			\node[circle,fill,inner sep=3pt ] (a) at (0,0) {};
			\node[circle,fill,inner sep=3pt ] (b) at (0,1) {};
			\node[circle,fill,inner sep=3pt ] (c) at (0,2) {};
			\node[circle,fill,inner sep=3pt ] (d) at (0,3) {};

			\node[circle,fill,inner sep=3pt ] (e) at (2,0.5) {};
			\node[circle,fill,inner sep=3pt ] (f) at (2,2.5) {};
			
			\draw (c) -- (f);
		
			\node[circle,fill,inner sep=3pt ] (a2) at (5,0) {};
			\node[circle,fill,inner sep=3pt,color=black] (b2) at (5,1) {};
			\node[circle,fill,inner sep=3pt ] (c2) at (5,2) {};
			\node[circle,fill,inner sep=3pt ] (d2) at (5,3) {};

			\node[circle,fill,inner sep=3pt,color=black] (e2) at (7,0.5) {};
			\node[circle,fill,inner sep=3pt ] (f2) at (7,2.5) {};
			
			\draw[ line width=2pt, color=black] (b2) -- (e2);
			\draw (c2) -- (f2);

			\draw[ >->] (3,1.5)--(4,1.5);
			
		\end{tikzpicture}
	\end{center}
	\caption{Pairing between $0$-simplices and $1$-simplices of $\M(K_{4,2})$ with respect to $\vv^{(3,2)}$.}\label{matching 01 (32) K42}
\end{figure}

Thus, we now define $\vv':=\bigsqcup \limits_{i=1}^{2}\vv^{(i,1)} \bigsqcup \vv^{(1,2)} \bigsqcup \vv^{(3,2)}$. The well-definedness of $\vv'$ follows from the construction. We can also check that $\vv'$ is acyclic. \autoref{k42 unpaired} illustrates the $\vv'$ critical simplices of $\M(K_{4,2})$.

\begin{figure}[htbp]
	\begin{center}
		
		\begin{tikzpicture}[transform shape, scale = 0.5]
		
			\node[circle,fill,inner sep=3pt ] (a) at (0,0) {};
			\node[circle,fill,inner sep=3pt ] (b) at (0,1) {};
			\node[circle,fill,inner sep=3pt ] (c) at (0,2) {};
			\node[circle,fill,inner sep=3pt ] (d) at (0,3) {};

			\node[circle,fill,inner sep=3pt ] (e) at (2,0.5) {};
			\node[circle,fill,inner sep=3pt ] (f) at (2,2.5) {};
			
			\draw (d) -- (f);

			\node[circle,fill,inner sep=3pt ] (a1) at (5,0) {};
			\node[circle,fill,inner sep=3pt ] (b1) at (5,1) {};
			\node[circle,fill,inner sep=3pt ] (c1) at (5,2) {};
			\node[circle,fill,inner sep=3pt ] (d1) at (5,3) {};

			\node[circle,fill,inner sep=3pt ] (e1) at (7,0.5) {};
			\node[circle,fill,inner sep=3pt ] (f1) at (7,2.5) {};
			
			\draw (c1) -- (f1);
			\draw (a1) -- (e1);

			\node[circle,fill,inner sep=3pt ] (a2) at (10,0) {};
			\node[circle,fill,inner sep=3pt ] (b2) at (10,1) {};
			\node[circle,fill,inner sep=3pt ] (c2) at (10,2) {};
			\node[circle,fill,inner sep=3pt ] (d2) at (10,3) {};

			\node[circle,fill,inner sep=3pt ] (e2) at (12,0.5) {};
			\node[circle,fill,inner sep=3pt ] (f2) at (12,2.5) {};
			
			\draw (c2) -- (e2);
			\draw (b2) -- (f2);

			\node[circle,fill,inner sep=3pt ] (a3) at (15,0) {};
			\node[circle,fill,inner sep=3pt ] (b3) at (15,1) {};
			\node[circle,fill,inner sep=3pt ] (c3) at (15,2) {};
			\node[circle,fill,inner sep=3pt ] (d3) at (15,3) {};

			\node[circle,fill,inner sep=3pt ] (e3) at (17,0.5) {};
			\node[circle,fill,inner sep=3pt ] (f3) at (17,2.5) {};
			
			\draw (c3) -- (e3);
			\draw (a3) -- (f3);

			\node[circle,fill,inner sep=3pt ] (a4) at (20,0) {};
			\node[circle,fill,inner sep=3pt ] (b4) at (20,1) {};
			\node[circle,fill,inner sep=3pt ] (c4) at (20,2) {};
			\node[circle,fill,inner sep=3pt ] (d4) at (20,3) {};

			\node[circle,fill,inner sep=3pt ] (e4) at (22,0.5) {};
			\node[circle,fill,inner sep=3pt ] (f4) at (22,2.5) {};
			
			\draw (b4) -- (f4);
			\draw (a4) -- (e4);

			\node[circle,fill,inner sep=3pt ] (a5) at (25,0) {};
			\node[circle,fill,inner sep=3pt ] (b5) at (25,1) {};
			\node[circle,fill,inner sep=3pt ] (c5) at (25,2) {};
			\node[circle,fill,inner sep=3pt ] (d5) at (25,3) {};

			\node[circle,fill,inner sep=3pt ] (e5) at (27,0.5) {};
			\node[circle,fill,inner sep=3pt ] (f5) at (27,2.5) {};
			
			\draw (b5) -- (e5);
			\draw (a5) -- (f5);

		\end{tikzpicture}
	\end{center}
	\caption{All $\vv'$-critical simplices of $\M(K_{4,2})$.}\label{k42 unpaired}
\end{figure}

Thus, for each $\alpha^{(2)} \in \mathcal{N}(\M(\K))$, the $\oW$-critical simplices are but copies of these $\vv'$-critical simplices, which differ by a labelling of vertices.
There are precisely $10$ two-dimensional simplices in $\mathcal{N}(\M(\K))$. Therefore,
\begin{align*}
	&\sum \limits_{\alpha^{(2)} \in \mathcal{N}(\M(\K))}|\Crit_0^{\oW}(\Aa)|=10,\\ & \sum \limits_{\alpha^{(2)} \in \mathcal{N}(\M(\K))}|\Crit_1^{\oW}(\Aa)|=50.
\end{align*}

\textbf{Case IV:} Let $\dim(\alpha)=3$.

We observe that each $\Aa$ is a copy of $\M(K_{4,1})$, which differ by a labelling of vertices. Thus each $\Aa$ is a $0$-dimensional simplicial complex with precisely four $0$-simplices, all of which must be critical with respect to any gradient vector field. Since, there are five $3$-simplices in $\mathcal{N}(\M(\K))$, therefore,
$$ \sum \limits_{\alpha^{(3)} \in \mathcal{N}(\M(\K))}|\Crit_0^{\oW}(\Aa)|=20.$$

Now that we have assigned a gradient vector field $\oW$ on each $\Aa$, we can find $L_0$, $L_1$, $L_2$ and $L_3$. We list here the count of elements for each $L_i$.

\begin{align*}
	& |L_0|= \sum \limits_{\alpha^{(0)} \in \mathcal{N}(\M(\K))}|\Crit_0^{\oW}(\Aa)|= 5, \\
	& |L_1|= \sum \limits_{\alpha^{(1)} \in \mathcal{N}(\M(\K))}|\Crit_0^{\oW}(\Aa)|=10, \\
	& |L_2| = \sum \limits_{\alpha^{(1)} \in \mathcal{N}(\M(\K))}|\Crit_1^{\oW}(\Aa)| +\sum \limits_{\alpha^{(2)} \in \mathcal{N}(\M(\K))}|\Crit_0^{\oW}(\Aa)|=20 + 10 =30,\\
	&|L_3|= \sum \limits_{\alpha^{(1)} \in \mathcal{N}(\M(\K))}|\Crit_2^{\oW}(\Aa)| + \sum \limits_{\alpha^{(2)} \in \mathcal{N}(\M(\K))}|\Crit_1^{\oW}(\Aa)| \\ & + \sum \limits_{\alpha^{(3)} \in \mathcal{N}(\M(\K))}|\Crit_0^{\oW}(\Aa)| = 10+50+20 =80.
\end{align*}
We have computed the generalised trajectories for $\M(\K)$ through Python. We list two of them here as illustrative examples (see \autoref{Mgen} and \autoref{Mgen1}).

\begin{figure}[h]
	\centering
	\begin{tikzpicture}[
		vertex/.style={draw, circle, fill=black, inner sep=0.05pt, minimum size=0.7mm},
		baseline, scale=1.8]
		
		\node (a) at (0,0){$\{(2,4)\}_{\{1,2,3\}}$};
		\node (b) at (0.8,0) {$\hspace{0.8em}\Rightarrow$};
		\node (c) at (1.8,0){$\{(2,4)\}_{\{1,3\}}$};
		\node (d) at (2.8,1){$\{(2,4), (3,2)\}_{\{1,3\}}$};
		\node (e) at (3.8,0){$\{(3,2)\}_{\{1,3\}}$};
		\node (f) at (4.8,1){$\{(3,2), (4,4)\}_{\{1,3\}}$};
		\node (g) at (5.8,0){$\{(4,4)\}_{\{1,3\}}$};
		\node (h) at (6.8,1){$\{(2,2), (4,4)\}_{\{1,3\}}$};
		\node (i) at (7.8,0){$\{(2,2)\}_{\{1,3\}}$};
	
		\draw[>->] (c) -- (d);
		\draw[->] (d) -- (e);
		\draw[>->] (e) -- (f);
		\draw[->] (f) -- (g);
		\draw[>->] (g) -- (h);
		\draw[->] (h) -- (i);

	\end{tikzpicture}
	\caption{A generalised trajectory initiating from $\{(2,4)\}_{\{1,2,3\}} \in L_2$ and terminating at $\{(2,2)\}_{\{1,3\}}\in L_1$.}\label{Mgen}
\end{figure}
\begin{figure}[h]
	\centering
	\begin{tikzpicture}[
		vertex/.style={draw, circle, fill=black, inner sep=0.05pt, minimum size=0.7mm},
		baseline, scale=1.6]
		
		\node (a) at (0,0){$\{(4,4)\}_{\alpha_0}$};
		\node (b) at (0.8,0) {$\hspace{0.8em}\Rightarrow$};
		\node (c) at (1.8,0){$\{(4,4)\}_{\alpha_1}$};
		\node (d) at (1.8,1){$\{(2,5), (4,4)\}_{\alpha_1}$};
		\node (e) at (3.2,0){$\{(2,5)\}_{\alpha_1}$};
		\node (f) at (3.8,1){$\{(2,5), (3,4)\}_{\alpha_1}$};
		\node (g) at (4.8,1){$\hspace{0.8em}\Rightarrow$};
		\node (h) at (5.9,1){$\{(2,5), (3,4)\}_{\alpha_2}$};
		\node (i) at (5.9,2){$\{(2,5), (3,4), (4,3)\}_{\alpha_2}$};
		\node (j) at (7.7,1){$\{(2,5), (4,3)\}_{\alpha_2}$};
		\node (k) at (8.3,2){$\{(2,5), (4,3), (5,4)\}_{\alpha_2}$};
		\node (l) at (9.5,1) {$\{(4,3), (5,4)\}_{\alpha_2}$};
	
		\draw[>->] (c) -- (d);
		\draw[->] (d) -- (e);
		\draw[>->] (e) -- (f);
		\draw[>->] (h) -- (i);
		\draw[->] (i) -- (j);
		\draw[>->] (j) -- (k);
		\draw[->] (k) -- (l);

	\end{tikzpicture}
	\caption{A generalised trajectory initiating from $\{(4,4)\}_{\{1,2,3,5\}} \in L_3$ and terminating at $\{(4,3), (5,4)\}_{\{1,2\}}\in L_2$. Here, $\alpha_0= \{1,2,3,5\}$, $\alpha_1=\{1,2,3\}$, $\alpha_2=\{1,2\}$.}\label{Mgen1}
\end{figure}

Next, we have computed the matrices of the boundary maps $\pL_1$, $\pL_2$ and $\pL_3$ using Python (Algorithm~\ref{alg:boundary first}). These are given as follows.

\begin{figure}[H]
	\centering
	\setlength{\arraycolsep}{4pt}
	\subfigure{$
		\pL_{1}=\left[
		\begin{array}{*{32}{c}}
			-1 & 1 & 0 & 0 & 0\\
			-1 & 0 & 1 & 0 & 0\\
			-1 & 0 & 0 & 1 & 0\\
			-1 & 0 & 0 & 0 & 1\\
			0 & -1 & 1 & 0 & 0\\
			0 & -1 & 0 & 1 & 0\\
			0 & -1 & 0 & 0 & 1\\
			0 & 0 &-1 & 1 & 0\\
			0 & 0 &-1 & 0 & 1\\
			0 & 0 & 0 &-1 & 1\\
			
		\end{array}
		\right]^{\mathsf T}$},
	\quad
	\subfigure{$
		\pL_{2}=\left[
		\begin{array}{*{32}{c}}
			0 & 0 & 0 & 0 & 0 & 0 & 0 & 0 & 0 & 0\\
			0 & 0 & 0 & 0 & 0 & 0 & 0 & 0 & 0 & 0\\
			1 & -1 &  0 &  0 & 1 & 0 & 0 & 0 & 0 & 0\\
			1 & 0 & -1 &  0 & 0 & 1 & 0 & 0 & 0 & 0\\
			1 & 0 & 0 & -1 & 0 & 0 & 1 & 0 & 0 & 0\\
			0 & 0 & 0 & 0 & 0 & 0 & 0 & 0 & 0 & 0\\
			0 & 0 & 0 & 0 & 0 & 0 & 0 & 0 & 0 & 0\\
			0 & 1 & -1 & 0 & 0 & 0 & 0 & 1 & 0 & 0\\
			0 & 1 & 0 & -1 & 0 & 0 & 0 & 0 & 1 & 0\\
			0 & 0 & 0 & 0 & 0 & 0 & 0 & 0 & 0 & 0\\
			0 & 0 & 0 & 0 & 0 & 0 & 0 & 0 & 0 & 0\\
			0 &  0 & 1 & -1 & 0 & 0 & 0 & 0 & 0 & 1\\
			0 & 0 & 0 & 0 & 0 & 0 & 0 & 0 & 0 & 0\\
			0 & 0 & 0 & 0 & 0 & 0 & 0 & 0 & 0 & 0\\
			0 & 0 & 0 & 0 & 0 & 0 & 0 & 0 & 0 & 0\\
			0 & 0 & 0 & 0 & 0 & 0 & 0 & 0 & 0 & 0\\
			0 & 0 &  0 & 0 & 1 & -1 & 0 & 1 & 0 & 0\\
			0 & 0 & 0 & 0 & 1 & 0 & -1 & 0 & 1 & 0\\
			0 & 0 & 0 & 0 & 0 & 0 & 0 & 0 & 0 & 0\\
			0 & 0 & 0 & 0 & 0 & 0 & 0 & 0 & 0 & 0\\
			0 & 0 & 0 & 0 & 0 & 1 & -1 & 0 & 0 & 1\\
			0 & 0 & 0 & 0 & 0 & 0 & 0 & 0 & 0 & 0\\
			0 & 0 & 0 & 0 & 0 & 0 & 0 & 0 & 0 & 0\\
			0 & 0 & 0 & 0 & 0 & 0 & 0 & 0 & 0 & 0\\
			0 & 0 & 0 & 0 & 0 & 0 & 0 & 0 & 0 & 0\\
			0 & 0 & 0 & 0 & 0 & 0 & 0 & 1 & -1 & 1\\
			0 & 0 & 0 & 0 & 0 & 0 & 0 & 0 & 0 & 0\\
			0 & 0 & 0 & 0 & 0 & 0 & 0 & 0 & 0 & 0\\
			0 & 0 & 0 & 0 & 0 & 0 & 0 & 0 & 0 & 0\\
			0 & 0 & 0 & 0 & 0 & 0 & 0 & 0 & 0 & 0\\
		\end{array}
		\right]^{\mathsf{T}}
		$}
\end{figure}\label{del1}

\begin{figure}[H]
	\centering
	\setlength{\arraycolsep}{5pt}
	
	\resizebox{0.95\textwidth}{!}{$
		\pL_{3}=\left[
		\begin{array}{*{32}{c}}
			0 & 0 & 0 & 0 & 0 & 0 & 0 & 0 & 0 & 0 & 0 & 0 & 0 & 0 & 0 & 0 & 0 & 0 & 0 & 0 & 0 & 0 & 0 & 0 & 0 & 0 & 0 & 0 & 0 & 0\\
			-1 & 0 & 0 & 0 & 0 & 1 & 0 & 0 & 0 & 0 & 0 & 0 & 0 & 0 & -1 & 0 & 0 & 0 & 0 & 0 & 0 & 0 & 0 & 0
			&  0 & 0 & 0 & 0 & 0 & 0\\
			1 & 0 & 0 & 0 & 0 & -1 & 0 & 0 & 0 & 0 & 0 & 0 & 0 & 0 & 1 & 0 & 0 & 0 & 0 & 0 & 0 & 0 & 0 & 0 &
			0 & 0 & 0 & 0 & 0 & 0\\
			0 & -1 &  0 & 0 & 0 & 0 & 1 & 0 & 0 & 0 & 0 & 0 & 0 & 0 & 0 & -1 & 0 & 0 & 0 & 0 & 0 & 0 & 0 & 0
			&  0 & 0 & 0 & 0 & 0 & 0\\
			0 & 1 & 0 & 0 & 0 & 0 & -1 & 0 & 0 & 0 & 0 & 0 & 0 & 0 & 0 & 1 & 0 & 0 & 0 & 0 & 0 & 0 & 0 & 0
			& 0 & 0 & 0 & 0 & 0 & 0\\
			0 & 0 & 0 & 0 & 0 & 0 & 0 & 0 & 0 & 0 & 0 & 0 & 0 & 0 & 0 & 0 & 0 & 0 & 0 & 0 & 0 & 0 & 0 & 0 & 0 & 0 & 0 & 0 & 0 & 0\\
			0 & -1 & -1 & 1 & 0 & 0 & 1 & -1 & 0 &  0 & -1 & 0 & 0 & 0 & 0 & -1 & 1 & 0 & 0 & 1 & 0 & 0 & 0 & 0
			& -1 & 0 & 0 &  0 & 0 & 0\\
			-1 & 0 & -1 & 1 & 0 & 1 & 0 & -1 & 0 & -1 & 0 & 0 & 0 & 0 &-1 & 0 & 1 & 0 & 1 & 0 & 0 & 0 & 0 & -1 &
			0 & 0 & 0 & 0 & 0 & 0\\
			-1 & -1 & -1 & 1 & 0 & 1 & 1 & -1 & 0 & -1 & -1 & 0 & 0 & 0 & -1 & -1 & 1 & 0 & 1 & 1 & 0 & 0 & 0 & -1 &
			-1 & 0 & 0 & 0 & 0 & 0\\
			0 & 0 & -1 & 1 & 0 & 0 & 0 & -1 & 0 & 0 & 0 & 0 & 0 & 0 & 0 & 0 & 1 & 0 &  0 & 0 & 0 & 0 & 0 & 0
			&  0 & 0 & 0 & 0 & 0 & 0\\
			0 & -1 & -1 & 0 & 1 & 0 & 1 & 0 & -1 & 0 & 0 & 0 & -1 & 0 & 0 & -1 & 0 & 1 & 0 & 0 & 0 & 1 & 0 & 0
			&  0 & 0 & -1 & 0 & 0 & 0\\
			0 & 0 & -1 & 0 & 1 & 0 & 0 & 0 & -1 & 0 & 0 & 0 & -1 & -1 & 0 & 0 & 0 & 1 & 0 & 0 & 0 & 1 & 1 & 0 &
			0 &  0 & -1 & -1 & 0 & 0\\
			-1 & -1 & -1 & 0 & 1 & 1 & 1 & 0 & -1 & 0 & 0 & 0 & 0 & 0 &-1 & -1 & 0 & 1 & 0 & 0 & 0 & 0 & 0 & 0
			&  0 & 0 & 0 & 0 & 0 & 0\\
			-1 & 0 & -1 & 0 & 1 & 1 & 0 & 0 & -1 & 0 & 0 & 0 & 0 & -1 & -1 & 0 & 0 & 1 & 0 & 0 & 0 & 0 & 1 & 0
			&  0 & 0 & 0 & -1 &  0 & 0\\
			-1 & 0 & 0 & 0 & 0 & 0 & 0 & 0 & 0 & -1 & 0 & 0 & 0 & 0 & 0 & 0 & 0 & 0 & 1 & 0 & 0 & 0 & 0 & 0
			&  0 & 0 & 0 & 0 & 0 & 0\\
			0 & -1 & 0 & 0 & 0 & 0 & 0 & 0 & 0 & 0 & -1 & 0 & 0 & 0 & 0 & 0 & 0 & 0 & 0 & 1 & 0 & 0 & 0 & 0
			&  0 & 0 & 0 & 0 & 0 & 0\\
			0 & 0 & 0 & 0 & 0 & 0 & 0 & 0 & 0 & 0 & 0 & 0 & 0 & 0 & 0 & 0 & 0 & 0 & 0 & 0 & 0 & 0 & 0 & 0 & 0 & 0 & 0 & 0 & 0 & 0\\
			1 & 0 & 0 & 0 & 0 & 0 & 0 & 0 & 0 & 1 & 0 & 0 & 0 & 0 & 0 & 0 & 0 & 0 & -1 & 0 & 0 & 0 & 0 & 0
			&  0 & 0 & 0 & 0 & 0 & 0\\
			0 & 1  &0 & 0 & 0 & 0 & 0 & 0 & 0 & 0 & 1 & 0 & 0 & 0 & 0 & 0 & 0 & 0 & 0 & -1 & 0 & 0 & 0 & 0
			&  0 & 0 & 0 & 0 & 0 & 0\\
			0 & 1 & 0 & -1 & 1 & 0 & 0 & 0 & 0 & 0 & 1 & -1 & 0 & -1 &  0 & 0 & 0 & 0 & 0 & -1 & 1 & 0 & 1 & 0
			&  0 & 0 & 0 & 0 & -1 & 0\\
			0 & 0 & 0 & -1 & 1 & 0 & 0 & 0 & 0 & 0 & 0 & -1 & 0 & 0 & 0 & 0 & 0 & 0 & 0 & 0 & 1 & 0 & 0 & 0 &
			0 & 0 & 0 & 0 & -1 & -1\\
			1 & 1 & 0 & -1 & 1 & 0 & 0 & 0 & 0 & 1 & 1 & -1 & -1 & -1 &  0 & 0 & 0 &  0 & -1 & -1 & 1 & 1 & 1 & 0
			&  0 & 0 & 0 & 0 & 0 & 0\\
			1 & 0 & 0 & -1 & 1 & 0 & 0 & 0 & 0 & 1 & 0 & -1 & -1 & 0 & 0 & 0 & 0 & 0 & -1 & 0 & 1 & 1 & 0 & 0 &
			0 & 0 & 0 & 0 & 0 & -1\\
			0 & 1 & 0 & 0 & 0 & 0 & 0 & 0 & 0 & 0 & 0 & 0 & 0 & -1 & 0 & 0 & 0 & 0 & 0 & 0 & 0 & 0 & 1 & 0
			&  0 & 0 & 0 & 0 & 0 & 0\\
			0 & 0 & 0 & 0 & 0 & 0 & 0 & 0 & 0 & 0 & 0 & 0 & 0 & 0 & 0 & 0 & 0 & 0 & 0 & 0 & 0 & 0 & 0 & 0 & 0 & 0 & 0 & 0 & 0 & 0\\
			-1 & 0 & 0 & 0 & 0 & 0 & 0 & 0 & 0 & 0 & 0 & 0 & 1 & 0 & 0 & 0 & 0 & 0 & 0 & 0 & 0 & -1 &  0 & 0 &
			0 & 0 & 0 & 0 & 0 & 0\\
			1 & 0 & 0 & 0 & 0 & 0 & 0 & 0 & 0 & 0 & 0 & 0 & -1 & 0 & 0 & 0 & 0 & 0 & 0 & 0 & 0 & 1 & 0 & 0
			&  0 & 0 & 0 & 0 & 0 & 0\\
			0 & -1 & 0 & 0 & 0 & 0 & 0 & 0 & 0 & 0 & 0 & 0 & 0 & 1 & 0 & 0 & 0 & 0 & 0 & 0 & 0 & 0 & -1 & 0
			&  0 & 0 & 0 & 0 & 0 & 0\\
			0 & 0 & 0 & 0 & 0 & 0 & 0 & 0 & 0 & 0 & 0 & 0 & 0 & 0 & 0 & 0 & 0 & 0 & 0 & 0 & 0 & 0 & 0 & 0 & 0 & 0 & 0 & 0 & 0 & 0\\
			0 & 0 & 0 & 0 & 0 & -1 & 0 & 0 & 0 & 1 & 0 & 0 & 0 & 0 & 0 & 0 & 0 & 0 & 0 & 0 & 0 & 0 & 0 & 1
			&  0 & 0 & 0 & 0 & 0 & 0\\
			0 & 0 & 0 & 0 & 0 & 0 & 1 & 0 & 0 & 0 & -1 & 0 & 0 & 0 & 0 & 0 & 0 & 0 & 0 & 0 & 0 & 0 & 0 & 0
			& -1 & 0 & 0 & 0 & 0 & 0\\
			0 & 0 & 0 & 0 & 0 & 0 & -1 & 0 & 0 & 0 & 1 & 0 & 0 & 0 & 0 & 0 & 0 & 0 & 0 & 0 & 0 & 0 & 0 & 0
			&  1 & 0 & 0 & 0 & 0 & 0\\
			0 & 0 & 0 & 0 & 0 & 0 & 0 & 0 & 0 & 0 & 0 & 0 & 0 & 0 & 0 & 0 & 0 & 0 & 0 & 0 & 0 & 0 & 0 & 0 & 0 & 0 & 0 & 0 & 0 & 0\\
			0 & 0 & 0 & 0 & 0 & 1 & 0 & 0 & 0 & -1 & 0 & 0 & 0 & 0 & 0 & 0 & 0 & 0 & 0 & 0 & 0 & 0 & 0 & -1 &
			0 & 0 & 0 & 0 & 0 & 0\\
			0 & 0 & 0 & 0 & 0 & 0 & 0 & -1 & 1 & 0 & 0 & -1 & 0 & 0 & 0 & 0 & 0 & 0 & 0 & 0 & 0 & 0 & 0 & 0 &
			0 & 1 & 0 & 0 & 0 & 0\\
			0 & 0 & 0 & 0 & 0 & 1 & 1 & -1 & 1 & -1 & -1 & -1 & 1 & 1 & 0 & 0 & 0 & 0 & 0 & 0 & 0 & 0 & 0 & -1 &
			-1 & 1 & 1 & 1 & -1 & -1\\
			0 & 0 & 0 & 0 & 0 & 1 & 0 & -1 & 1 & -1 & 0 & -1 & 1 & 0 & 0 & 0 & 0 & 0 & 0 & 0 & 0 & 0 & 0 & -1 &
			0 & 1 & 1 & 0 & -1 & 0\\
			0 & 0 & 0 & 0 & 0 & 0 & 1 & -1 & 1 & 0 & -1 & -1 & 0 & 1 & 0 & 0 & 0 & 0 & 0 & 0 & 0 & 0 & 0 & 0
			& -1 & 1 & 0 & 1 & 0 & -1\\
			0 & 0 & 0 & 0 & 0 & 0 & 1 & 0 & 0 & 0 & 0 & 0 & 0 & 1 & 0 & 0 & 0 & 0 & 0 & 0 & 0 & 0 & 0 & 0 & 0 & 0 & 0 & 1 & 0 & 0\\
			0 & 0 & 0 & 0 & 0 & 0 & -1 & 0 & 0 & 0 & 0 & 0 & 0 & -1 &  0 & 0 & 0 & 0 & 0 & 0 & 0 & 0 & 0 & 0 &
			0 & 0 & 0 & -1 & 0 & 0\\
			0 & 0 & 0 & 0 & 0 & 0 & 0 & 0 & 0 & 0 & 0 & 0 & 0 & 0 & 0 & 0 & 0 & 0 & 0 & 0 & 0 & 0 & 0 & 0 & 0 & 0 & 0 & 0 & 0 & 0\\
			0 & 0 & 0 & 0 & 0 & 1 & 0 & 0 & 0 & 0 & 0 & 0 & 1 & 0 & 0 & 0 & 0 & 0 & 0 & 0 & 0 & 0 & 0 & 0 & 0 & 0 & 1 & 0 & 0 & 0\\
			0 & 0 & 0 & 0 & 0 & -1 & 0 & 0 & 0 & 0 & 0 & 0 & -1 & 0 & 0 & 0 & 0 & 0 & 0 & 0 & 0 & 0 & 0 & 0
			&  0 & 0 & -1 & 0 & 0 & 0\\
			0 & 0 & 0 & 0 & 0 & 0 & 0 & 0 & 0 & 0 & 0 & 0 & 0 & 0 & 0 & 0 & 0 & 0 & 0 & 0 & 0 & 0 & 0 & 0 & 0 & 0 & 0 & 0 & 0 & 0\\
			0 & 0 & 0 & 0 & 0 & 0 & 0 & 0 & 0 & 0 & -1 & 0 & 0 & 1 & 0 & 0 & 0 & 0 & 0 & 0 & 0 & 0 & 0 & 0 &
			0 & 0 & 0 & 0 & 0 & -1\\
			0 & 0 & 0 & 0 & 0 & 0 & 0 & 0 & 0 & 1 & 0 & 0 & -1 & 0 & 0 & 0 & 0 & 0 & 0 & 0 & 0 & 0 & 0 & 0
			&  0 & 0 & 0 & 0 & 1 & 0\\
			0 & 0 & 0 & 0 & 0 & 0 & 0 & 0 & 0 & -1 & 0 & 0 & 1 & 0 & 0 & 0 & 0 & 0 & 0 & 0 & 0 & 0 & 0 & 0
			&  0 & 0 & 0 & 0 & -1 & 0\\
			0 & 0 & 0 & 0 & 0 & 0 & 0 & 0 & 0 & 0 & 1 & 0 & 0 & -1 & 0 & 0 & 0 & 0 & 0 & 0 & 0 & 0 & 0 & 0
			&  0 & 0 & 0 & 0 & 0 & 1\\
			0 & 0 & 0 & 0 & 0 & 0 & 0 & 0 & 0 & 0 & 0 & 0 & 0 & 0 & 0 & 0 & 0 & 0 & 0 & 0 & 0 & 0 & 0 & 0 & 0 & 0 & 0 & 0 & 0 & 0\\
			0 & 0 & 0 & 0 & 0 & 0 & 0 & 0 & 0 & 0 & 0 & 0 & 0 & 0 & 0 & 0 & 0 & 0 & 0 & 0 & 0 & 0 & 0 & 0 & 0 & 0 & 0 & 0 & 0 & 0\\
			0 & 0 & 0 & 0 & 0 & 0 & 0 & 0 & 0 & 0 & 0 & 0 & 0 & 0 & 0 & 0 & 0 & 0 & 0 & 0 & 0 & 0 & 0 & 0 & 0 & 0 & 0 & 0 & 0 & 0\\
			0 & 0 & 0 & 0 & 0 & 0 & 0 & 0 & 0 & 0 & 0 & 0 & 0 & 0 & 0 & 0 & 0 & 0 & 0 & 0 & 0 & 0 & 0 & 0 & 0 & 0 & 0 & 0 & 0 & 0\\
			0 & 0 & 0 & 0 & 0 & 0 & 0 & 0 & 0 & 0 & 0 & 0 & 0 & 0 & 1 & 0 & 0 & 0 & -1 & 0 & 0 & 0 & 0 & 1
			&  0 & 0 & 0 & 0 & 0 & 0\\
			0 & 0 & 0 & 0 & 0 & 0 & 0 & 0 & 0 & 0 & 0 & 0 & 0 & 0 & -1 & 0 & 0 & 0 & 1 & 0 & 0 & 0 & 0 & -1 &
			0 & 0 & 0 & 0 & 0 & 0\\
			0 & 0 & 0 & 0 & 0 & 0 & 0 & 0 & 0 & 0 & 0 & 0 & 0 & 0 & 0 & 1 & 0 & 0 & 0 & -1 & 0 & 0 & 0 & 0
			&  1 & 0 & 0 & 0 & 0 & 0\\
			0 & 0 & 0 & 0 & 0 & 0 & 0 & 0 & 0 & 0 & 0 & 0 & 0 & 0 & 0 & -1 & 0 & 0 & 0 & 1 & 0 & 0 & 0 & 0
			& -1 & 0 & 0 & 0 & 0 & 0\\
			0 & 0 & 0 & 0 & 0 & 0 & 0 & 0 & 0 & 0 & 0 & 0 & 0 & 0 & 1 & 1 & -1 & 1 & -1 & -1 & -1 &  1 & 1 & 1 &
			1 & 1 & -1 & -1 & 1 & 1\\
			0 & 0 & 0 & 0 & 0 & 0 & 0 & 0 & 0 & 0 & 0 & 0 & 0 & 0 & 0 & 0 & -1 & 1 & 0 & 0 & -1 & 0 & 0 & 0
			& 0 & 1 & 0 & 0 & 0 & 0\\
			0 & 0 & 0 & 0 & 0 & 0 & 0 & 0 & 0 & 0 & 0 & 0 & 0 & 0 & 1 & 0 & -1 & 1 & -1 & 0 & -1 & 1 & 0 & 1
			&  0 & 1 & -1 & 0 & 1 & 0\\
			0 & 0 & 0 & 0 & 0 & 0 & 0 & 0 & 0 & 0 & 0 & 0 & 0 & 0 & 0 & 1 & -1 & 1 & 0 & -1 & -1 & 0 & 1 & 0
			&  1 & 1 & 0 & -1 & 0 & 1\\
			0 & 0 & 0 & 0 & 0 & 0 & 0 & 0 & 0 & 0 & 0 & 0 & 0 & 0 & 1 & 0 & 0 & 0 & 0 & 0 & 0 & 1 & 0 & 0
			&  0 & 0 & -1 & 0 & 0 & 0\\
			0 & 0 & 0 & 0 & 0 & 0 & 0 & 0 & 0 & 0 & 0 & 0 & 0 & 0 & -1 & 0 & 0 & 0 & 0 & 0 & 0 & -1 & 0 & 0
			&  0 & 0 & 1 & 0 & 0 & 0\\
			0 & 0 & 0 & 0 & 0 & 0 & 0 & 0 & 0 & 0 & 0 & 0 & 0 & 0 & 0 & 1 & 0 & 0 & 0 & 0 & 0 & 0 & 1 & 0
			& 0 & 0 & 0 & -1 & 0 & 0\\
			0 & 0 & 0 & 0 & 0 & 0 & 0 & 0 & 0 & 0 & 0 & 0 & 0 & 0 & 0 & -1 & 0 & 0 & 0 & 0 & 0 & 0 & -1 & 0
			&  0 & 0 & 0 & 1 & 0 & 0\\
			0 & 0 & 0 & 0 & 0 & 0 & 0 & 0 & 0 & 0 & 0 & 0 & 0 & 0 & 0 & 0 & 0 & 0 & 0 & 0 & 0 & 0 & 0 & 0 & 0 & 0 & 0 & 0 & 0 & 0\\
			0 & 0 & 0 & 0 & 0 & 0 & 0 & 0 & 0 & 0 & 0 & 0 & 0 & 0 & 0 & 0 & 0 & 0 & 0 & 0 & 0 & 0 & 0 & 0 & 0 & 0 & 0 & 0 & 0 & 0\\
			0 & 0 & 0 & 0 & 0 & 0 & 0 & 0 & 0 & 0 & 0 & 0 & 0 & 0 & 0 & 0 & 0 & 0 & 1 & 0 & 0 & -1 & 0 & 0
			& 0 & 0 & 0 & 0 & -1 & 0\\
			0 & 0 & 0 & 0 & 0 & 0 & 0 & 0 & 0 & 0 & 0 & 0 & 0 & 0 & 0 & 0 & 0 & 0 & 0 & 1 & 0 & 0 & -1 & 0
			&  0 & 0 & 0 & 0 & 0 & -1\\
			0 & 0 & 0 & 0 & 0 & 0 & 0 & 0 & 0 & 0 & 0 & 0 & 0 & 0 & 0 & 0 & 0 & 0 & -1 & 0 & 0 & 1 & 0 & 0
			& 0 & 0 & 0 & 0 & 1 & 0\\
			0 & 0 & 0 & 0 & 0 & 0 & 0 & 0 & 0 & 0 & 0 & 0 & 0 & 0 & 0 & 0 & 0 & 0 & 0 & 0 & 0 & 0 & 0 & 0 & 0 & 0 & 0 & 0 & 0 & 0\\
			0 & 0 & 0 & 0 & 0 & 0 & 0 & 0 & 0 & 0 & 0 & 0 & 0 & 0 & 0 & 0 & 0 & 0 & 0 & -1 & 0 & 0 & 1 & 0
			&  0 & 0 & 0 & 0 & 0 & 1\\
			0 & 0 & 0 & 0 & 0 & 0 & 0 & 0 & 0 & 0 & 0 & 0 & 0 & 0 & 0 & 0 & 0 & 0 & 0 & 0 & 0 & 0 & 0 & 0 & 0 & 0 & 0 & 0 & 0 & 0\\
			0 & 0 & 0 & 0 & 0 & 0 & 0 & 0 & 0 & 0 & 0 & 0 & 0 & 0 & 0 & 0 & 0 & 0 & 0 & 0 & 0 & 0 & 0 & 0 & 0 & 0 & 0 & 0 & 0 & 0\\
			0 & 0 & 0 & 0 & 0 & 0 & 0 & 0 & 0 & 0 & 0 & 0 & 0 & 0 & 0 & 0 & 0 & 0 & 0 & 0 & 0 & 0 & 0 & -1
			&  0 & 0 & 1 & 0 & -1 & 0\\
			0 & 0 & 0 & 0 & 0 & 0 & 0 & 0 & 0 & 0 & 0 & 0 & 0 & 0 & 0 & 0 & 0 & 0 & 0 & 0 & 0 & 0 & 0 & 0
			& -1 & 0 & 0 & 1 & 0 & -1\\
			0 & 0 & 0 & 0 & 0 & 0 & 0 & 0 & 0 & 0 & 0 & 0 & 0 & 0 & 0 & 0 & 0 & 0 & 0 & 0 & 0 & 0 & 0 & 0
			&  1 & 0 & 0 & -1 & 0 & 1\\
			0 & 0 & 0 & 0 & 0 & 0 & 0 & 0 & 0 & 0 & 0 & 0 & 0 & 0 & 0 & 0 & 0 & 0 & 0 & 0 & 0 & 0 & 0 & 0 & 0 & 0 & 0 & 0 & 0 & 0\\
			0 & 0 & 0 & 0 & 0 & 0 & 0 & 0 & 0 & 0 & 0 & 0 & 0 & 0 & 0 & 0 & 0 & 0 & 0 & 0 & 0 & 0 & 0 & 1
			&  0 & 0 & -1 & 0 & 1 & 0\\
			0 & 0 & 0 & 0 & 0 & 0 & 0 & 0 & 0 & 0 & 0 & 0 & 0 & 0 & 0 & 0 & 0 & 0 & 0 & 0 & 0 & 0 & 0 & 0 & 0 & 0 & 0 & 0 & 0 & 0\\
			0 & 0 & 0 & 0 & 0 & 0 & 0 & 0 & 0 & 0 & 0 & 0 & 0 & 0 & 0 & 0 & 0 & 0 & 0 & 0 & 0 & 0 & 0 & 0 & 0 & 0 & 0 & 0 & 0 & 0\\
		\end{array}
		\right]^{\mathsf{T}}
		$}
\end{figure}

\subsection{Reduction of the chain complex $\LM$}
In this subsection, we will use \autoref{rt} from Section~\ref{red}, to reduce the chain complex $\LM$ to $\LMZ$ based on an acyclic pairing $\z$ on $\LM$. In the latter part of this subsection, we further reduce $\LMZ$ using \autoref{koz}.

In order to use \autoref{rt}, we first assign a gradient vector field on the nerve of $\M(\K)$.

We know that $\Nm= \{\alpha \subseteq [5] \mid \alpha \ne [5]\}$. We construct a gradient vector field $\VN$ on $\Nm$ as follows. For any $\sigma \in \Nm$, if $1 \notin \sigma$ and $\sigma \sqcup\{1\} \in \Nm$, then $(\sigma, \sigma \sqcup \{1\}) \in \VN$. The well-definedness and acyclicity of $\VN$ follow directly from the construction. So, the $\VN$-critical simplices are given by, $\Crit^{\VN}(\Nm)= \{\{1\}, \{2,3,4,5\}\}$.

Now, let us construct a pairing $\z$ on $\LM$ according to the construction given in Section~\ref{red}. We note from the previous section that for each $\alpha \in \Nm$, $\dim(\alpha) \le 2$, there exists a unique critical $0$-simplex in $\Aa$. Therefore, for any $(\alpha^{(q)}, \beta^{(q+1)})$, $q=0,1$, when we pair simplices in $L^{\beta}_{q+1}$ with simplices in $L^{\alpha}_q$, there is no question of choice. Each such pair is uniquely determined by the corresponding pair in $\VN$. The following tables (\autoref{pairs01} and \autoref{pairs12}) list the resulting pairs in $\z$ in such cases, among elements of $L_0, L_1$ and $L_1, L_2$ respectively.
\begin{table}[ht]
	\centering
	\begin{tabular}{ll}
		\toprule
		$\mathbf{\VN}$ & $\mathbf{\z}$ \\
		\midrule
		$(\{2\}, \{1,2\})$ & $(\{(1,2)\}_{\{2\}}, \{(2,3)\}_{\{1,2\}})$ \\
		$(\{3\}, \{1,3\})$ & $(\{(1,3)\}_{\{3\}},\{(2,2)\}_{\{1,3\}})$ \\
		$(\{4\}, \{1,4\})$ & $(\{(1,4)\}_{\{4\}}, \{(2,2)\}_{\{1,4\}})$ \\
		$(\{5\}, \{1,5\})$ & $(\{(1,5)\}_{\{5\}}, \{(2,2)\}_{\{1,5\}})$\\
		\bottomrule
	\end{tabular}
	\caption{Pairings between $L_0^{\alpha}$ and $L_1^{\beta}$ corresponding to each $(\alpha^{(0)}, \beta^{(1)}) \in \VN$.}\label{pairs01}
\end{table}

\begin{table}[ht]
	\centering
	\begin{tabular}{ll}
		\toprule
		$\mathbf{\VN}$ & $\mathbf{\z}$ \\
		\midrule
		$(\{2,3\}, \{1,2,3\})$ & $\{(2,1)\}_{\{2,3\}}, \{(2,4)\}_{\{1,2.3\}})$ \\
		$(\{2,4\}, \{1,2,4\})$ & $(\{(2,1)\}_{\{2,4\}},\{(2,3)\}_{\{1,2,4\}})$ \\
		$(\{2,5\}, \{1,2,5\})$ & $(\{(2,1)\}_{\{2,5\}}, \{(2,3)\}_{\{1,2,5\}})$ \\
		$(\{3,4\}, \{1,3,4\})$ & $(\{(2,1)\}_{\{3,4\}}, \{(2,2)\}_{\{1,3,4\}})$\\
		$(\{3,5\}, \{1,3,5\})$ & $(\{(2,1)\}_{\{3,5\}}, \{(2,2)\}_{\{1,3,5\}})$\\
		$(\{4,5\}, \{1,4,5\})$ & $(\{(2,1)\}_{\{4,5\}}, \{(2,2)\}_{\{1,4,5\}})$\\
		\bottomrule
	\end{tabular}
	\caption{Pairings between $L_1^{\alpha}$ and $L_2^{\beta}$ corresponding to each $(\alpha^{(1)}, \beta^{(2)}) \in \VN$.} \label{pairs12}
\end{table}
In the next step we list the pairs in $\z$ between elements of $L_2$ and $L_3$. Here, for each $(\alpha^{(2)}, \beta^{(3)}) \in \VN$, we order the simplices of $L_3^{\beta}$ in a particular order of our choice. Now, as in Section~\ref{red}, for each $(\alpha^{(2)}, \beta^{(3)}) \in \VN$, we choose the first simplex $\tau_\beta$ in $L_3^{\beta}$ in this ordering and pair it with the unique $\mathbf{t}(\tau_\beta, \alpha)\in L_2^{\alpha}$ associated with it. Accordingly, \autoref{pairs23} lists the pairings in $\z$ between elements of $L_2$ and $L_3$.

\begin{table}[ht]
	\centering
	\begin{tabular}{ll}
		\toprule
		$\mathbf{\VN}$ & $\mathbf{\z}$ \\
		\midrule
		$(\{2,3,4\}, \{1,2,3,4\})$ & $\{(2,1)\}_{\{2,3,4\}}, \{(4,5)\}_{\{1,2.3,4\}})$ \\
		$(\{2,3,5\}, \{1,2,3,5\})$ & $(\{(2,1)\}_{\{2,3,5\}},\{(3,4)_{\{1,2,3,5\}}\})$ \\
		$(\{2,4,5\}, \{1,2,4,5\})$ & $(\{(2,1)\}_{\{2,4,5\}}, \{(3,3)_{\{1,2,4,5\}}\})$ \\
		$(\{3,4,5\}, \{1,3,4,5\})$ & $(\{(2,1)\}_{\{3,4,5\}}, \{(2,2)_{\{1,3,4,5\}}\})$\\
		\bottomrule
	\end{tabular}
	\caption{Pairings between $L_2^{\alpha}$ and $L_3^{\beta}$ corresponding to $(\alpha^{(2)}, \beta^{(3)}) \in \VN$.} \label{pairs23}
\end{table}

Therefore, from \autoref{acyc}, we conclude that $\z$ is an acyclic pairing on $\LM$ and from \autoref{rt}, we deduce that the homology groups of the reduced chain complex $\LMZ$ are isomorphic to those of $\LM$, that is, $H^{\Lq^{\z}}_{\#}(\M(\K)) \cong H^{\Lq}_{\#}(\M(\K))$. We note that after this reduction, $|L_0^{\z}|=1$, $|L_1^{\z}|=0$, $|L_2^{\z}|=20$ and $|L_3^{\z}|=76$. It can be observed here that the elements of $L_2^{\z}$ and $L_3^{\z}$ allow further reduction. So, now we reduce this chain complex further using \autoref{koz} (Subsection~\ref{alg}).

Let us rename $\LMZ$ as $\F$. We represent $\F$ as a ranked poset, where for $y \in \Lz_{q+1}$ and $x \in \Lz_{q}$, $y \le x$  if and only if $\langle y,x \rangle_{\Lz}=\sum_{P \in \mathcal{P}(y,x)}w_L(P) \ne 0$.  Now, we again assign an acyclic pairing $\z'$ on this reduced chain complex $\F$ based on the same gradient vector field on $\N$.  For each $(\alpha^{(1)}, \beta^{(2)}) \in \VN$, we choose a well-defined collection of pairs $\{(\sigma^{(1)}_{\alpha}, \tau^{(1)}_\beta) \mid \langle\tau_\beta, \sigma_\alpha \rangle_{\Lz}= \pm 1\}$. So, we have $\z'= \{(\sigma^{(1)}_{\alpha}, \tau^{(1)}_\beta) \mid \langle\tau_\beta, \sigma_\alpha \rangle_{\Lz}= \pm 1, (\alpha^{(1)}, \beta^{(2)}) \in \VN\}$. We list the pairings of $\z'$ in \autoref{pairz23}.
\begin{table}[ht]
	\centering
	\begin{tabular}{ll}
		\toprule
		$\mathbf{\VN}$ & $\mathbf{\z'}$ \\
		\midrule
		$(\{2,3\}, \{1,2,3\})$ & $\{(3,1),(5,4)\}_{\{2,3\}}, \{(3,4),(5,5)\}_{\{1,2,3\}})$ \\
		$(\{2,3\}, \{1,2,3\})$ & $\{(4,1),(5,4)\}_{\{2,3\}}, \{(4,4),(5,5)\}_{\{1,2,3\}})$ \\
		$(\{2,4\}, \{1,2,4\})$ & $\{(3,1),(5,3)\}_{\{2,4\}}, \{(3,5),(5,3)\}_{\{1,2,4\}})$ \\
		$(\{2,4\}, \{1,2,4\})$ & $\{(4,1),(5,3)\}_{\{2,4\}}, \{(4,5),(5,3)\}_{\{1,2,4\}})$ \\
		$(\{2,5\}, \{1,2,5\})$ & $\{(3,1),(5,3)\}_{\{2,5\}}, \{(3,4),(5,3)\}_{\{1,2,5\}})$ \\
		$(\{2,5\}, \{1,2,5\})$ & $\{(4,1),(5,3)\}_{\{2,5\}}, \{(4,3),(5,4)\}_{\{1,2,5\}})$ \\
		$(\{3,4\}, \{1,3,4\})$ & $\{(3,1),(5,2)\}_{\{3,4\}}, \{(3,2),(5,5)\}_{\{1,3,4\}})$ \\
		$(\{3,4\}, \{1,3,4\})$ & $\{(4,1),(5,2)\}_{\{3,4\}}, \{(4,5),(5,2)\}_{\{1,3,4\}})$ \\
		$(\{3,5\}, \{1,3,5\})$ & $\{(4,1),(5,2)\}_{\{3,5\}}, \{(4,4),(5,2)\}_{\{1,3,5\}})$ \\
		$(\{3,5\}, \{1,3,5\})$ & $\{(3,1),(5,2)\}_{\{3,5\}}, \{(3,4),(5,2)\}_{\{1,3,5\}})$ \\
		$(\{4,5\}, \{1,4,5\})$ & $\{(4,1),(5,2)\}_{\{4,5\}}, \{(4,3),(5,2)\}_{\{1,4,5\}})$ \\
		$(\{4,5\}, \{1,4,5\})$ & $\{(3,1),(5,2)\}_{\{4,5\}}, \{(3,2),(5,3)\}_{\{1,4,5\}})$ \\
		\bottomrule
	\end{tabular}
	\caption{Pairings between $L_2^{\z}$ and $L_3^{\z}$ corresponding to $(\alpha^{(1)}, \beta^{(2)}) \in \VN$.} \label{pairz23}
\end{table}

It can be checked that $\z'$ is a well-defined and acyclic pairing on $\F$. Hence, we can define $\z'$-paths and the boundary map $\partial^{\z'}_{q+1}: F_{q+1}^{\z'} \rightarrow F_q^{\z'}$, for $q \ge 0$, as in Subsection~\ref{alg}. So, $\LMz$ is the reduced chain complex with respect to the acyclic pairing $\z'$. Using \autoref{koz}, we can infer that the chain complexes $\F$ and $\LMz$ have isomorphic homology groups. Hence, $H_{\#}(\M(\K)) \cong H^{F}_{\#}(\M(\K)) \cong H^{F^{\z'}}_{\#}(\M(\K))$. We have computed the boundary matrices of this reduced chain complex $\LMz$ using Python (a similar setup of Algorithm~\ref{alg reduction}). We observe that the only non-trivial boundary map in this reduced chain complex is $\partial^{\mathcal{F}^{\z'}}_3$ whose matrix is given as follows.

\begin{figure}[H]
	\centering
	\setlength{\arraycolsep}{4pt}
	
	\resizebox{0.37\textwidth}{!}{$
		\partial^{\mathcal{F}^{\z'}}_3=\left[
		\begin{array}{*{32}{c}}
			0 & 0 & 0 & 0 & 0 & 0 & 0 & 0\\
			0 & 0 & 0 & 0 & 0 & 0 & 0 & 0\\
			0 & 0 & 0 & 0 & 0 & 0 & 0 & 0\\
			0 & 0 & 0 & 0 & 0 & 0 & 0 & 0\\
			1  & -1 & -1 & 1 & 1 & -1 & 0 & 0\\
			1 &  0 & -1 & 0 & 1 & 0 & 0 & 0\\
			0 & -1 & 0 & 1 & 0 & -1 & 0 & 0\\
			0 & -1 & 0 & 1 & 0 & 0 & 0 & 1\\
			1 & 0 & -1 & 0 & 0 & 0 & -1 & 0\\
			1 & -1 & -1 & 1 & 0 &  0 & -1 & 1\\
			0 & 0 & 0 & 0 & 0 & 0 & 0 & 0\\
			0 & 0 & 0 & 0 & 0 & 0 & 0 & 0\\
			0 & 0 & 0 & 0 & 0 & 0 & 0 & 0\\
			0 &  1 & 0 & 0 & 0 & 1 & 0 & -1\\
			-1 & 0 & 0 & 0 & -1 & 0 & 1 & 0\\
			-1 & 1 & 0 & 0 & -1 & 1 & 1 & -1\\
			0 & 0 & 0 & 0 & 0 & 0 & 0 & 0\\
			0 & 0 & 0 & 0 & 0 & 0 & 0 & 0\\
			0 & 0 & 0 & 0 & 0 & 0 & 0 & 0\\
			0 & 0 & 0 & 0 & 0 & 0 & 0 & 0\\
			0 & 0 & 0 & 0 & 0 & 0 & 0 & 0\\
			0 & 0 & 0 & 0 & 0 & 0 & 0 & 0\\
			0 & 0 & 0 & 0 & 0 & 0 & 0 & 0\\
			0 & 0 & -1 & -1 & 1 & 1 & -1 & -1\\
			0 & 0 & -1 & 0 & 1 & 0 & -1 & 0\\
			0 & 0 & 0 & -1 & 0 & 1 & 0 & -1\\
			0 & 0 & 0 & 0 & 0 & 0 & 0 & 0\\
			0 & 0 & 0 & 0 & 0 & 0 & 0 & 0\\
			0 & 0 & 0 & 0 & 0 & 0 & 0 & 0\\
			0 & 0 & 0 & 0 & 0 & 0 & 0 & 0\\
			0 & 0 & 0 & 0 & 0 & 0 & 0 & 0\\
			0 & 0 & 0 & 0 & 0 & 0 & 0 & 0\\
			0 & 0 & 0 & 0 & 0 & 0 & 0 & 0\\
			0 & 0 & 0 & 0 & 0 & 0 & 0 & 0\\
			0 & 0 & 0 & 0 & 0 & 0 & 0 & 0\\
			0 & 0 & 0 & 0 & 0 & 0 & 0 & 0\\
			-2 & 0 & 2 & 0 & -2 & 0 & 0 & 0\\
			2 & 0 & -2 & 0 & 2 & 0 & 0 & 0\\
			0 & -2 & 0 & 2 & 0 & -2 & 0 & 0\\
			0 & 2 & 0 & -2 & 0 & 2 & 0 & 0\\
			-2 & -3 & 2 & 2 & -2 & -2 & 1 & 3\\
			1 & 0 & -1 & -1 & 1 & 1 & -2 & 0\\
			-2 & 0 & 2 & -1 & -2 & 1 & 1 & 0\\
			1 & -3 & -1 & 2 & 1 & -2 & -2 & 3\\
			-2 & 0 & 2 & 0 & 0 & 0 & 2 & 0\\
			2 & 0 & -2 & 0 & 0 & 0 & -2 & 0\\
			0 & -2 & 0 & 2 & 0 & 0 & 0 & 2\\
			0 & 2 & 0 & -2 & 0 & 0 & 0 & -2\\
			0 & 0 & 0 & 0 & 0 & 0 & 0 & 0\\
			0 & 0 & 0 & 0 & 0 & 0 & 0 & 0\\
			2 & 0 & 0 & 0 & 2 & 0 & -2 & 0\\
			0 & 2 & 0 & 0 & 0 & 2 & 0 & -2\\
			-2 & 0 & 0 & 0 & -2 & 0 & 2 & 0\\
			0 & 0 & 0 & 0 & 0 & 0 & 0 & 0\\
			0 & -2 & 0 & 0 & 0 & -2 & 0 & 2\\
			0 & 0 & 0 & 0 & 0 & 0 & 0 & 0\\
			0 & 0 & 0 & 0 & 0 & 0 & 0 & 0\\
			0 & 0 & -2 & 0 & 2 & 0 & -2 & 0\\
			0 & 0 & 0 & -2 & 0 & 2 & 0 & -2\\
			0 & 0 & 0 &  2 & 0 & -2 & 0 & 2\\
			0 & 0 & 0 & 0 & 0 & 0 & 0 & 0\\
			0 & 0 & 2 & 0 & -2 & 0 & 2 & 0\\
			0 & 0 & 0 & 0 & 0 & 0 & 0 & 0\\
			0 & 0 & 0 & 0 & 0 & 0 & 0 & 0\\
		\end{array}
		\right]^{\mathsf{T}}
		$}
\end{figure}

We note here that if we consider the simplicial chain complex of $\M(\K)$, then the number of generators of the chain groups are, $|S_0|= 25$, $|S_1|=200$, $|S_2|=600$, $|S_3|=600$, $|S_4|=120$, whereas the number of generators in the case of  our reduced chain complex $\LMz$ are given by, $|\Crit_0^{\z'}(\Lz)|=1$, $|\Crit_1^{\z'}(\Lz)|= 0$, $|\Crit_2^{\z'}(\Lz)|=8$, $|\Crit_3^{\z'}(\Lz)|=64$. Thus, we observe a significant reduction in the number of generators of the chain groups.
Next, we have computed the homology groups of $\LMz$ using the aforementioned boundary matrices through SageMath. These are given by,

$$ H^{F^{\z'}}_0 \cong \Z, H^{F^{\z'}}_1 \cong 0, H^{F^{\z'}}_2 \cong \Z_3, H^{F^{\z'}}_3 \cong \Z^{56}.$$
which are consistent with the values in existing literature (see \cite{Wachs2003}).
\bibliographystyle{plain}
\bibliography{ref.bib}

\end{document}